\numberwithin{equation}{section}
\newtheorem{theorem}{Theorem}[section]
\newtheorem{lemma}[theorem]{Lemma}
\newtheorem{definition}[theorem]{Definition}
\newtheorem{corollary}[theorem]{Corollary}
\newtheorem{remark}[theorem]{Remark}
\def\<{\langle}
\def\>{\rangle}
\def\XXint#1#2#3{{\setbox0=\hbox{$#1{#2#3}{\int}$ }
\vcenter{\hbox{$#2#3$ }}\kern-.6\wd0}}
\begin{document}

\title{Asymptotic behavior at infinity and existence of solutions to\\ the Lagrangian mean curvature flow in $\mathbb R^{n+1}_-$\footnote{Supported in part by the National Natural Science Foundation of China (No. 12501253 and 12371200) and by the Beijing Natural Science Foundation (No. 1244038 and No. 1254049).}}

\author{Jiguang Bao, Zixiao Liu}
\date{\today}

\maketitle

\begin{abstract}
    This paper investigates the asymptotic behavior at infinity of ancient solutions to the Lagrangian mean curvature flow. 
    Under conditions that admit Liouville type rigidity theorems, we prove that every classical solution converges at infinity
    to the sum of a quadratic polynomial in $x$ and a linear function in $t$, with an explicitly derived exponential rate of convergence.
    As a critical part of the proof framework of this paper,  we establish the existence of a global viscosity solution with prescribed asymptotic behavior at infinity, featuring two key innovations: (i) applicability to all dimensions $n\geq 2$, and (ii) no requirement that the Hessian matrix of the prescribed quadratic term be positive definite or close to a scalar multiple of the identity matrix.  
    These results establish the relationship between Liouville type rigidity, asymptotic analysis at infinity, and the existence of viscosity solutions. 

    \textit{Keywords :} Lagrangian mean curvature flow, Asymptotic behavior at infinity, Exponential convergence, Existence of viscosity solution.  

    \textit{2020 MSC :} 35B40, 35A01, 35K55, 53E10.
\end{abstract}

\section{Introduction}

We focus on the Lagrangian mean curvature flow equation
\begin{equation}\label{equ-LagFlow-Extended}
    u_t=\sum_{i=1}^n\arctan\lambda_i(D^2u)+f(x,t)\quad\text{in }\mathbb R^{n+1}_-,
\end{equation}
where $u_t$ denotes the partial derivative of $u(x,t)$ with respect to  time variable $t$, $D^2u$ is the Hessian matrix of $u(x,t)$ with respect to space variable $x$,  $\lambda_1(D^2 u),\cdots,\lambda_n(D^2u)$ denotes the eigenvalues of $D^2u$, $f(x,t)$ is a known function, and $\mathbb R^{n+1}:=\mathbb R^n\times(-\infty,0]$. The equation origins naturally from the evolution of a gradient graph $(x,Du(x,t))\in\mathbb R^n\times\mathbb R^n$ by the mean curvature flow, and the gradient graph remains a Lagrangian submanifold in $\mathbb R^n\times\mathbb R^n$ with the standard symplectic structure for all $t\in (-\infty,0]$. (See for instance \cite{Harvey-Lawson-CalibratedGeometries,Smoczyk-LagMeanCurvFlow-Existence,Smoczyk-Wang-MeanCurFlow-ConvePotential,Chen-Pang-UniqueLagranMeanFlow}, etc.) 
For a smooth stationary solution to \eqref{equ-LagFlow-Extended}  i.e.,   the Lagrangian mean curvature equation
\begin{equation}\label{equ-Spl-Lagmean}
\sum_{i=1}^n\arctan\lambda_i(D^2u)=f(x)\quad\text{in }\mathbb R^n,
\end{equation}
the graph becomes a Lagrangian submanifold with mean curvature $(0,Df(x))^{\perp}$ in $\mathbb R^{n}\times\mathbb R^n$. Especially, when $f(x)\equiv \Theta$ is a constant, the gradient graph $(x,Du(x))$ is minimal in $\mathbb R^n\times\mathbb R^n$, and  equation \eqref{equ-Spl-Lagmean} is known as the special Lagrangian equation.  (See for instance \cite{Warren-Calibrations-MA,Wang-Huang-Bao-SecondBoundary-LagCurvFlow}, etc.)

For the special Lagrangian equation, Yuan \cite{Yuan-GlobalSolution-SPL,Yuan-Bernstein-SPL} proved that any classical solution to  equation \eqref{equ-Spl-Lagmean} with $f\equiv \Theta$ satisfying 
\[
\Theta>\frac{(n-2)\pi}{2}\quad\text{or}\quad \lambda_i(x)\geq \left\{
\begin{array}{llll}
    -K, & \text{when }n\leq 3,\\
    -\epsilon(n), & \text{when }n\geq 4,
\end{array} 
\right.\quad\forall~1\leq i\leq n,
\]
must be a quadratic polynomial, where $K$ is an arbitrarily constant, $\epsilon(n)$ is a dimensional constant, and  $\lambda_i(x)$ is short for $\lambda_i(D^2u(x))$.  
Warren and Yuan \cite{Warren-Yuan-Liouville-SPL} proposed an alternative constraint condition, such that the Liouville type rigidity holds. Recently, it was generalized by Ding \cite{Ding-Liouville-SPL-HessianEst} into 
\[
3+\lambda_i^2(x)+2\lambda_i(x)\lambda_j(x)\geq 0,\quad\forall~1\leq i,j\leq n.
\]
We refer to   \cite{Li-Li-Yuan-BernsteinThm,Li-Dirichlet-SPL,Bao-Liu-Wang-DiricLagrangianEntire,Han-Marchenko-SolutionSPL-NearInfinity} and the references therein, for the investigation on asymptotic behavior at infinity and the Dirichlet type problems of equation \eqref{equ-Spl-Lagmean}. 

For the Lagrangian mean curvature flow in dimensions $n\geq 2$, Nguyen and Yuan \cite{Nguyen-Yuan-PrioriEst-LagranMeanCurvFlow} considered classical solution to \eqref{equ-LagFlow-Extended} with $f\equiv 0$ that satisfies
    \begin{equation}\label{equ-cond-HessianBdd}
    |D^2u(x,t)|\leq K<\infty,\quad\forall~(x,t)\in\mathbb R^{n+1}_-,
    \end{equation}
    and either of the following conditions when $n\geq 4$:
    \begin{enumerate}[(i)]
        \item\label{condition-1} $\displaystyle\sum_{i=1}^n\arctan\lambda_i(x,t)\geq\frac{(n-2)\pi}{2}$;
        \item\label{condition-2} $\lambda_i(x,t)\geq-\epsilon(n)$, for $1\leq i\leq n$;
        \item\label{condition-3} $3+\lambda_i^2(x,t)+2\lambda_i(x,t)\lambda_j(x,t)\geq 0$, for $1\leq i,j\leq n$,
    \end{enumerate}
    where $\lambda_i(x,t)$ is short for $\lambda_i(D^2u(x,t))$ and $\epsilon(n)$ is a dimensional constant from \cite{Yuan-Bernstein-SPL}.
Under the assumptions above, $u$ must be of form
\begin{equation}\label{equ-ParabolicForm}
u(x,t)=\tau t+\frac{1}{2}x'Ax+b\cdot x+c,\quad\forall~(x,t)\in\mathbb R^{n+1}_-,
\end{equation}
where  $A\in\mathrm{Sym}(n)$ is a symmetric matrix, $b\in\mathbb R^n, c\in\mathbb R$, $\tau=\sum_{i=1}^n\arctan\lambda_i(A)$, and $x'$ denote the transpose of $x$. Most recently, 
 Bhattacharya, Warren and Weser \cite{Bhattacharya-Warren-Weser-Liouville-LagranFlow-CPDE} also obtained the Liouville type rigidity for all $n\geq 2$, under the assumptions that $u$ being a classical solution to \eqref{equ-LagFlow-Extended} with $f\equiv 0$ that is convex with respect to $x$ and 
  \begin{equation}\label{equ-cond-Growth-BhattaWarrenWeser}
    \limsup_{t\rightarrow-\infty}\sup_{x\in\mathbb R^n}\dfrac{|u(x,t)|}{|x|^2+R_0}<\dfrac{1}{(6\sqrt n+2)^2},
  \end{equation}
  for some $R_0>0$.

While solutions to \eqref{equ-LagFlow-Extended} with $f\equiv 0$ in $\mathbb R^{n+1}_-$ have been thoroughly investigated, the asymptotic behavior at infinity for the case where $f$ has compact support remains largely open. This is very different from the research progress in the study of special Lagrangian equations \cite{Li-Li-Yuan-BernsteinThm} and the parabolic Monge-Amp\`ere equations \cite{Xiong-Bao-JCP-ParaboMA,An-Bao-Liu-EntireSol-ParboMA-UnbouGrowth}, highlighting a distinct research gap in the present context.

In this  paper, we reveal the asymptotic behavior of solutions to \eqref{equ-LagFlow-Extended}.
Throughout the text, we always suppose that $f(x,t)$ satisfies the condition $(F)$:
\[
f\in C^0(\mathbb R^{n+1}_-)\quad\text{and}\quad\mathrm{supp}(f)\subset B_S\times[-T,0],
\]
where $B_S\subset\mathbb R^n$ denote the ball centered at the origin with radius $S$, and 
\begin{equation}\label{equ-def-T}
    T:=\sup\{-t~:~f(x,t)\neq 0\text{ for some }x\in\mathbb R^n\}.
\end{equation}

\begin{theorem}\label{thm-Main1-AsymBehav}
    Let $n\geq 2$, $f$ satisfy the condition $(F)$, and $u\in C^{2,1}(\mathbb R^{n+1}_-)$ be a classical solution to \eqref{equ-LagFlow-Extended}. 
    Assume further that either of the following conditions holds,
    \begin{enumerate}[(a)]
        \item\label{cond-MainThm-1} $u$ satisfying \eqref{equ-cond-HessianBdd}, and either of conditions \eqref{condition-1}-\eqref{condition-3};
        \item\label{cond-MainThm-2} $u$ being convex with respect to $x$, and satisfying condition \eqref{equ-cond-Growth-BhattaWarrenWeser}.
    \end{enumerate}
    Then there exist $A\in\mathrm{Sym}(n)$, $b\in\mathbb R^n, c\in\mathbb R$, and $\tau:=\sum_{i=1}^n\arctan\lambda_i(A)$ such that
    \[
    E(x,t):=u(x,t)-\left(\tau t+\frac{1}{2}x'Ax+b\cdot x+c\right)
    \]
    satisfies $E(x,t)=0$ in $\mathbb R^n\times(-\infty,-T]$, and for some positive constants $C,\widetilde S>S$, 
    \begin{equation}\label{equ-Result-AsymBehav-u}
    |E(x,t)| 
           \leq  \frac{C}{(t+T)^{\frac{n}{2}-2}x'(I+A^2)x}
           \exp\left(-\frac{x'(I+A^2)x-\widetilde S|(I+A^2)x|}{4(t+T)}\right), 
    \end{equation}
    for sufficiently large $|x|$ and $t\in (-T,0]$.
\end{theorem}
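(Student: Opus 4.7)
The plan combines the Liouville-type rigidity theorems quoted above with a Gaussian decay estimate for a linearized error equation.

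First, because $f\equiv 0$ on $\mathbb{R}^n\times(-\infty,-T]$ by the definition of $T$, the restriction of $u$ to this slab is a classical ancient solution of the homogeneous Lagrangian mean curvature flow and inherits every hypothesis in (a) or (b). The rigidity theorem of \cite{Nguyen-Yuan-PrioriEst-LagranMeanCurvFlow} (under (a)) or of \cite{Bhattacharya-Warren-Weser-Liouville-LagranFlow-CPDE} (under (b)) then forces
\[
u(x,t)=\tau t+\tfrac12 x'Ax+b\cdot x+c,\qquad(x,t)\in\mathbb R^n\times(-\infty,-T],
\]
for some $A\in\mathrm{Sym}(n)$, $b\in\mathbb R^n$, $c\in\mathbb R$, and $\tau=\sum\arctan\lambda_i(A)$. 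This identifies the candidate polynomial $P$ and yields $E:=u-P\equiv 0$ on $\mathbb R^n\times(-\infty,-T]$, namely the first line of \eqref{equ-Result-AsymBehav-u}.

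Second, on $\mathbb R^n\times(-T,0]$, the mean value theorem along $M_s:=sD^2u+(1-s)A$, together with $\tfrac{d}{dM}\sum\arctan\lambda_i(M)=(I+M^2)^{-1}$, gives the linear parabolic equation
\[
E_t-a^{ij}(x,t)D_{ij}E=f(x,t),\qquad a^{ij}(x,t):=\int_0^1\bigl[(I+M_s^2)^{-1}\bigr]_{ij}\,ds,
\]
with zero initial datum at $t=-T$. Under either hypothesis $|D^2u|$ is globally bounded, so $a^{ij}$ is uniformly elliptic; crucially, by the previous step $a^{ij}(\cdot,-T)\equiv[(I+A^2)^{-1}]_{ij}$, so the principal part of the linearization at $t=-T$ is the constant-coefficient heat operator with diffusion matrix $(I+A^2)^{-1}$. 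This sets the natural Gaussian scale for \eqref{equ-Result-AsymBehav-u}.

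Third, the decay bound follows by parabolic comparison against the tailored Gaussian
\[
w(x,t):=\frac{C}{(t+T)^{\frac n2-2}}\exp\!\left(-\frac{x'(I+A^2)x-\tilde S\,|(I+A^2)x|}{4(t+T)}\right).
\]
Choosing $\tilde S$ large relative to $S$ and the spectrum of $I+A^2$ forces $x'(I+A^2)x-\tilde S|(I+A^2)x|\leq 0$ on $\overline B_S$, so that $w\gtrsim(t+T)^{2-n/2}$ there; combined with the elementary a priori bound $|E|\leq\|f\|_{C^0}(t+T)$ from the linear parabolic equation with zero initial datum, enlarging $C$ ensures $|E|\leq w$ on $\partial B_S\times(-T,0]$. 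A direct computation of $w_t-a^{ij}D_{ij}w$, exploiting the prefactor $(t+T)^{2-n/2}$ to absorb the deviation of $a^{ij}$ from $[(I+A^2)^{-1}]_{ij}$ and the cross terms generated by differentiating the linear shift $\tilde S\,|(I+A^2)x|$, shows $w$ is a supersolution of $\partial_t-a^{ij}D_{ij}$ on $(\mathbb R^n\setminus B_S)\times(-T,0]$ once $C,\tilde S$ are sufficiently large. Combined with $E(\cdot,-T)\equiv 0\leq w(\cdot,-T)$ and the (quadratic) global growth control on $E$, the parabolic comparison principle applied to $\pm E-w$ then produces $|E|\leq w$ throughout, which is exactly \eqref{equ-Result-AsymBehav-u}.

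The principal technical obstacle is the supersolution verification. The coefficients $a^{ij}$ deviate from the constant limit $[(I+A^2)^{-1}]_{ij}$ in a position-dependent and only qualitatively controlled manner, so the sharp Gaussian matrix $(I+A^2)$ and coefficient $\tfrac14$ in the exponent of $w$, together with the anomalous prefactor $(t+T)^{2-n/2}$ (two powers better than the naïve heat-kernel $(t+T)^{-n/2}$), must be calibrated so as to simultaneously (i) produce enough positive slack in $w_t-a^{ij}D_{ij}w$ to absorb the variable-coefficient error, (ii) keep the exponent of $w$ on $\overline B_S$ non-positive so that $w$ dominates $E$ inside the source region, and (iii) keep the Gaussian decay of $w$ at $|x|\to\infty$ at least as fast as that predicted by the limit heat operator. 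Matching these three constraints is the delicate quantitative balance that pins down the precise form of \eqref{equ-Result-AsymBehav-u}.
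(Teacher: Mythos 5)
Your first two steps match the paper's scheme: Liouville rigidity identifies the quadratic polynomial on $\mathbb R^n\times(-\infty,-T]$, and Newton--Leibniz linearization of the error $E$ gives a uniformly parabolic equation $E_t-a^{ij}D_{ij}E=f$ with zero initial datum at $t=-T$. Where you depart is the third step: the paper never treats the right-hand side of \eqref{equ-Result-AsymBehav-u} as a supersolution of the linearized operator. Instead it first establishes polynomial decay of $E$ (and hence, by interior Schauder, of $D^2E$ and the coefficient deviation $a^{ij}-a^{ij}(\infty)$) via Theorems \ref{thm-Main3-EntireDiri} and \ref{thm-Main2-Uniqueness-Extended}, then represents $E$ through the fundamental solution $\Gamma$ of $\partial_t-a^{ij}D_{ij}$ and Duhamel's formula, and obtains the Gaussian bound from Friedman's estimates on $\Gamma$ together with the explicit convolution computations in Lemmas \ref{lem-Calculus-Convolution} and \ref{lem-Calculus-Convolution-CompactCase}; the sharp matrix $I+A^2$ in the exponent is then reached by iterating (Lemma \ref{lem-IterationScheme-Exponential}).

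The barrier step in your proposal has a genuine gap: the function $w(x,t)=\frac{C}{(t+T)^{n/2-2}}\exp\Phi$ with $\Phi=-\frac{g(x)}{4(t+T)}$, $g(x)=x'Bx-\tilde S|Bx|$, $B:=I+A^2$, is \emph{not} a supersolution of $\partial_t-a^{ij}D_{ij}$ near the initial time. Even with the constant limit coefficients $a^{ij}_\infty=(B^{-1})_{ij}$ one computes
\[
\frac{w_t-a^{ij}_\infty D_{ij}w}{w}
=\frac{4g-(Dg)'B^{-1}(Dg)}{16(t+T)^{2}}+\frac{a^{ij}_\infty D_{ij}g-4(\tfrac n2-2)}{4(t+T)},
\]
and, using $Dg=2Bx-\tilde S\,B^2x/|Bx|$,
\[
4g-(Dg)'B^{-1}(Dg)=-\tilde S^{2}\,\frac{x'B^3x}{|Bx|^{2}}\;\le\;-\tilde S^{2}\lambda_{\min}(B)<0 .
\]
Thus the leading term is $-\tilde S^{2}c/(t+T)^{2}$, which overwhelms the $+O(1/(t+T))$ ``slack'' you expect from the prefactor as soon as $t+T\lesssim\tilde S^{2}$. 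The two-powers-better prefactor is not, in fact, the mechanism that produces the shift $\tilde S|(I+A^2)x|$ in the exponent; in the paper that shift arises only as an \emph{a posteriori upper bound} coming from $|x_i-\xi_i|^2\geq|x_i|^2-2S_0|x_i|$ when integrating the Gaussian kernel against the compactly supported source, and the prefactor $(t+T)^{2-n/2}$ comes from the $d\tau$-integral in Lemma \ref{lem-Calculus-Convolution-CompactCase}. It is precisely because this profile is not a supersolution that the paper works through the fundamental-solution representation rather than a barrier.

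A secondary gap is quantitative control of the coefficients. You write that $a^{ij}$ deviates from $a^{ij}(\infty)$ ``in a position-dependent and only qualitatively controlled manner,'' but the argument needs a quantitative rate: in the paper this comes from the polynomial convergence of $E$ (Theorem \ref{thm-main5-GeneralThm}), upgraded by scaling and interior Schauder estimates to $|D^2E|=O(\mathcal R^{-\zeta-2})$ (Lemma \ref{lem-AsymBehav-HessianMatrix}). Bounded Hessian alone does not yield $D^2u\to A$, so the polynomial-decay stage cannot be bypassed before attempting the exponential upgrade.
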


\begin{remark}\label{Rem-1.2}
    Notably, for classical solutions in case \eqref{cond-MainThm-1}, either of  conditions \eqref{condition-1}-\eqref{condition-3} can be replaced by the holding of Liouville type rigidity. 
    Furthermore, estimate \eqref{equ-Result-AsymBehav-u} provides not only the asymptotic behavior at infinity, but also the vanishing rate as $t\rightarrow -T$ with $|x|$ sufficiently large. 
\end{remark}
\begin{remark}\label{Rem-1.3}
    By linearizing equation \eqref{equ-LagFlow-Extended}, we obtain 
    \[
    -E_t(x,t)+\sum_{i,j=1}^na_{ij}(x,t)D_{ij}E(x,t)=-f(x,t),\quad\text{in }\mathbb R^n\times [-T,0],
    \]
    with $[a_{ij}(x,t)]_{n\times n}\rightarrow (I+A^2)^{-1}$ as $-t+|x|\rightarrow\infty.$
    The main term in \eqref{equ-Result-AsymBehav-u} comes from the fundamental solution to the linearized equation as above, given by $-\frac{1}{(t+T)^{\frac{n}{2}}}\exp\left(-\frac{x'(I+A^2)x}{4(t+T)}\right)$, and it is optimal in the following sense. 
    
    For any $0<\alpha<\frac{1}{2}$, suppose $f_\alpha\leq 0$ satisfies the condition $(F)$, and
    \[
    f_\alpha(x,t)\leq -(t+T)^{\alpha},\quad\text{in }B_{\frac{S}{2}}\times[-T,0].
    \]
    Let $u, A, b, c, \tau$, and $E$ be as in Theorem \ref{thm-Main1-AsymBehav}. Especially, we assume that $u$ is strictly convex with respect to $x$. Then for any $0<\beta<1$, there exist positive constants $C$ and $\underline S>S$ such that $E$ satisfies \eqref{equ-Result-AsymBehav-u} and
    \[
    |E(x,t)|\geq \dfrac{C}{(t+T)^{\frac{n}{2}-\alpha-2}x'(I+A^2)x}\exp\left(
        -\frac{x'(I+A^2)x+\underline S|(I+A^2)x|}{4(1-\beta)(t+T)}
    \right),
    \]
    for sufficiently large $|x|$ and $t\in (-T,0]$. Since $\alpha$ and $\beta$ can be arbitrarily close to $0$, this proves the optimality of the main term in \eqref{equ-Result-AsymBehav-u}. The proof of this remark is given at the end of this paper.
\end{remark}

In general, viscosity solutions to fully nonlinear parabolic equations may not be classical solutions, a fundamental distinction that amplifies the difficulty of establishing Liouville type rigidity  (see for instance \cite{Gutierrez-Huang-JCP-ParaboMA,An-Bao-Liu-EntireSol-ParboMA-UnbouGrowth} for illustrative cases in parabolic Monge-Amp\`ere equations). 
For definitions of viscosity solutions to \eqref{equ-LagFlow-Extended}, we refer the reader to   \cite{User'sGuide-ViscositySol,Zhou-Gong-Bao-AncientSolution-paraboicMA}.

To date, no Liouville type rigidity result has been established for viscosity solutions to \eqref{equ-LagFlow-Extended} with $f\equiv 0$. 
Nevertheless, to facilitate broader theoretical applications, we derive the following asymptotic behavior result under the hypothesis that Liouville type rigidity holds for viscosity solutions under suitable  conditions. 
\begin{theorem}\label{thm-main5-GeneralThm}
    Let $n\geq 2$, $f\in C^0(\mathbb R^{n+1}_-)$ have compact support, and let $u\in C^0(\mathbb R^{n+1}_-)$ be a viscosity solution to \eqref{equ-LagFlow-Extended}. 
    Assume that the Liouville type rigidity holds for \eqref{equ-LagFlow-Extended} with $f\equiv 0$. Then for any $\zeta>0$,  there exists $C>0$   such that 
    \begin{equation}\label{equ-AsymBehav-Rough-C0Est-E}
        |E(x,t)| 
           \leq C|x|^{-\zeta},\quad\forall~(x,t)\in\mathbb R^n\times(-T,0], 
    \end{equation}
    where  $E$ is defined as in Theorem \ref{thm-Main1-AsymBehav}, $A\in\mathrm{Sym}(n)$, $b\in\mathbb R^n, c\in\mathbb R$, and $\tau:=\sum_{i=1}^n\arctan\lambda_i(A)$. 
\end{theorem}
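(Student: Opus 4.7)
The strategy is to invoke the hypothesized Liouville rigidity on $\{t\le -T\}$ to fix the asymptotic polynomial, and then use a compactness/iteration argument to establish spatial decay of the residual on $\{t>-T\}$.

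Since $f\equiv 0$ for $t\le-T$, the restriction $u|_{t\le -T}$ is (after a time shift) an ancient viscosity solution of the homogeneous LMCF equation on $\mathbb R^{n+1}_-$. The hypothesized Liouville rigidity then forces $u|_{t\le -T}\equiv \tau t+\tfrac12 x'Ax+b\cdot x+c$ for some $A\in\mathrm{Sym}(n)$, $b\in\mathbb R^n$, $c\in\mathbb R$, and $\tau=\sum_i\arctan\lambda_i(A)$, yielding the first line of \eqref{equ-AsymBehav-Rough-C0Est-E}. The bound $|\sum_i\arctan\lambda_i|\le n\pi/2$ gives the viscosity inequality $|u_t|\le n\pi/2+\|f\|_{L^\infty}$; combined with $Q_t\equiv\tau$, this yields the uniform estimate $|E(x,t)|\le C$ on $\mathbb R^n\times(-T,0]$, where $E$ is a viscosity solution of
\[
 E_t = G(D^2 E) + f,\qquad G(M):=\textstyle\sum_{i=1}^n\arctan\lambda_i(A+M)-\tau,\qquad E(\cdot,-T)\equiv 0,
\]
with $G$ concave and $G(0)=0$.

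To establish $|E|\to 0$ at spatial infinity, I would use translated blow-downs. Fix a unit vector $\omega\in\mathbb R^n$ and set $w_R(y,t):=E(y+R\omega,t)$. On any compact $B_K\times(-T,0]$ and for $R>S+K$, the source vanishes and $w_R$ is a bounded viscosity solution of $w_t=G(D^2 w)$ with zero initial data. Evans--Krylov and Krylov--Safonov interior regularity for viscosity solutions of concave parabolic equations furnish a locally uniformly convergent subsequence, whose limit $w_\infty$ solves the same Cauchy problem on $\mathbb R^n\times(-T,0]$; uniqueness of bounded viscosity solutions forces $w_\infty\equiv 0$, so $|E(x,t)|\to 0$ as $|x|\to\infty$ uniformly in $t$.

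To upgrade to arbitrary polynomial decay, I would iterate with the renormalization $W_R:=w_R/M(R-K)$, where $M(R):=\sup_{|x|\ge R,\,t\in(-T,0]}|E|$. Since $M(R-K)\to 0$, the rescaled equation for $W_R$ converges to its linearization $W_t=\mathrm{tr}((I+A^2)^{-1}D^2 W)$, and any subsequential limit $W_\infty$ solves this linear Cauchy problem with zero initial data, hence $W_\infty\equiv 0$. Packaging this via a compactness/contradiction argument gives a concrete geometric improvement $M(R+K/2)\le\gamma M(R-K)$ for some $\gamma<1$ and all large $R$, which chains to yield $M(R)\le C(\zeta) R^{-\zeta}$ (indeed exponential decay) for any prescribed $\zeta>0$.

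The principal obstacle is the last quantification: converting the qualitative vanishing of $W_\infty$ into an effective per-step decay rate. This requires (i) Evans--Krylov regularity for viscosity solutions of concave parabolic equations to pass to the linearized limit, (ii) uniqueness of bounded viscosity solutions to the concave Cauchy problem with zero initial data, and (iii) ensuring the nonlinear remainder $G(M)-\mathrm{tr}((I+A^2)^{-1}M)$ stays subdominant at each rescaling.
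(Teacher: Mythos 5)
Your proposal correctly reduces the problem via the hypothesized Liouville rigidity to an initial value problem on $\mathbb R^n\times(-T,0]$ with quadratic data at $t=-T$, and your uniform bound $|E|\le C$ on this strip is fine (one can get it via comparison with $\pm C(t+T)$). However, the compactness/blow-down step has a genuine gap: Evans--Krylov and Krylov--Safonov regularity require \emph{uniform} parabolicity (and Evans--Krylov additionally requires concavity of the operator). For the operator $G(M)=\sum_i\arctan\lambda_i(A+M)-\tau$, the derivative in direction $M_{ii}$ is $(1+\lambda_i^2)^{-1}$, which degenerates as $\lambda_i\to\pm\infty$. For a general viscosity solution $u\in C^0(\mathbb R^{n+1}_-)$ — which is the setting of this theorem — there is no a priori bound on $D^2u$, so you have neither uniform ellipticity nor concavity (the latter fails outside the supercritical phase $\sum_i\arctan\lambda_i\ge(n-2)\pi/2$). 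Thus the interior estimates you invoke to extract convergent subsequences $w_R\to w_\infty$, and later to pass to the linearized equation in the renormalized iteration, are unavailable. This is exactly the dichotomy the paper draws: Section 4.2 explicitly warns that its linearization scheme ``only work[s] for classical solutions, instead of viscosity solutions'' because it relies on the Hessian bound \eqref{equ-cond-HessianBdd} to make the linearized coefficients uniformly parabolic and H\"older continuous.

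The paper proves Theorem \ref{thm-main5-GeneralThm} by a route that avoids regularity theory entirely: it first constructs (Theorem \ref{thm-Main3-EntireDiri}) a viscosity solution $\Upsilon$ on all of $\mathbb R^{n+1}_-$ with the prescribed decay $\Upsilon-Q=O(\mathcal R^{-\zeta})$, via Perron's method sandwiched between explicit generalized-symmetric sub- and supersolutions built from ODEs; it then notes that $\Upsilon$ satisfies the same initial value problem on $\mathbb R^n\times(-T,0]$ as $u$, and concludes $u\equiv\Upsilon$ from the comparison principle without growth condition (Theorem \ref{thm-Main2-Uniqueness-Extended}). The decay is thus transferred from the constructed solution to $u$, not derived from interior estimates. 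Your scheme is close in spirit to what the paper actually does for Theorem \ref{thm-Mainthm-Asym-InSummary} (the exponential-rate upgrade for \emph{classical} solutions with bounded Hessian, using fundamental solutions and an iterative linearization), but it cannot establish Theorem \ref{thm-main5-GeneralThm} as stated. To repair your argument you would need either to add a Hessian bound to the hypotheses (changing the theorem) or to replace the compactness step with an explicit barrier construction, which is precisely what the paper's Sections 3.2--3.3 provide.
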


As the second part of this paper, which is critical to the proof framework of the paper, we provide the existence of viscosity solution to equation \eqref{equ-LagFlow-Extended} with prescribed asymptotic behavior at infinity. 
Hereinafter, we denote 
\[
\mathcal R(x,t):=(-t+|x|^2)^{\frac{1}{2}},\quad\forall~(x,t)\in\mathbb R^{n+1}_-,
\]
which will be used to characterize the asymptotic behavior as $-t+|x|^2\rightarrow\infty$.  
\begin{theorem}\label{thm-Main3-EntireDiri}
    Let $n\geq 2$ and $f\in C^0(\mathbb R^{n+1}_-)$ satisfy 
    \begin{equation}\label{equ-cond-RHS-ConverSpeed}
        f(x,t)=O(\mathcal R^{-\beta}(x,t)),\quad\text{as }\mathcal R(x,t)\rightarrow\infty,
    \end{equation}
    for some $\beta>2$. For any $A\in\mathrm{Sym}(n), b\in\mathbb R^n, c\in\mathbb R$, and $\tau:=\sum_{i=1}^n\arctan\lambda_i(A)$, there exists a unique   viscosity solution $u\in C^0(\mathbb R^{n+1}_-)$ to 
    \begin{equation}\label{equ-Prob-EntireDiri-InThm}
        \left\{
            \begin{array}{llll}
                \displaystyle u_t=\sum_{i=1}^n\arctan\lambda_i(D^2u)+f(x,t), & \text{in }\mathbb R^{n+1}_-,\\
                \displaystyle 
                u(x,t)=\tau t+\frac{1}{2}x'Ax+b\cdot x+c+
                O(\mathcal R^{2-\beta}(x,t)), &   \text{as }\mathcal R(x,t)\rightarrow\infty.
            \end{array}
        \right.
    \end{equation}
    Especially, when $f\in C^0(\mathbb R^{n+1}_-)$ has compact support, the asymptotic behavior condition in \eqref{equ-Prob-EntireDiri-InThm} can be replaced into 
    \begin{equation}\label{equ-enhanced-AsymBehav}
    u(x,t)=\tau t+\frac{1}{2}x'Ax+b\cdot x+c+
                O(\mathcal R^{-\zeta}(x,t)),\quad\text{as }\mathcal R(x,t)\rightarrow\infty,
    \end{equation}
    for any $\zeta>0$.
\end{theorem}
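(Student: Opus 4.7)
The plan is to apply Perron's method in the ancient-solution setting, with barriers obtained by linearizing the operator at the prescribed quadratic polynomial $P(x,t) := \tau t + \tfrac{1}{2}x'Ax + b\cdot x + c$ and inverting the linearization against $f$.

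First, I would write $u = P + w$. Since $P$ solves the homogeneous version of \eqref{equ-LagFlow-Extended} and $\frac{\partial}{\partial M_{ij}}\sum_k \arctan\lambda_k(M)\big|_{M=A} = [(I+A^2)^{-1}]_{ij}$, the equation becomes
\[
w_t - \operatorname{tr}\bigl((I+A^2)^{-1} D^2 w\bigr) = N(D^2 w) + f(x,t),
\]
where $|N(M)| \leq C|M|^2$ for small $|M|$. The change of variable $y = (I+A^2)^{1/2} x$ turns the linearized operator into the standard heat operator $\partial_t - \Delta_y$, while preserving the scale $\mathcal{R}(x,t) \asymp (-t + |y|^2)^{1/2}$.

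Next I would construct a positive function $\Psi(y,t) = O(\mathcal{R}^{2-\beta})$ satisfying $(\partial_t - \Delta_y)\Psi \geq c\,\mathcal{R}^{-\beta}$ for large $\mathcal{R}$. The radial ansatz $\Psi = (-t + |y|^2 + 1)^{1-\beta/2}$ works directly in the range $2 < \beta \leq (2n+1)/2$ by a chain-rule computation; for larger $\beta$ the heat potential of $\mathcal{R}^{-\beta}$ yields the same asymptotic decay (the convolution converging precisely because $\beta > 2$). With $K$ chosen so that $cK$ dominates $\sup(\mathcal{R}^\beta |f|)$, the nonlinear residual $|N(D^2(K\Psi))| = O(K^2 \mathcal{R}^{-2\beta})$ is negligible relative to $f$ on $\{\mathcal{R} \geq R_*\}$. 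After adjusting by a large additive constant on the bounded region, $P \pm K\Psi$ become global viscosity super- and sub-solutions of \eqref{equ-LagFlow-Extended} with the prescribed asymptotic profile.

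Perron's method \cite{User'sGuide-ViscositySol}, applied to the family of subsolutions sandwiched between $P - K\Psi$ and $P + K\Psi$, produces an envelope $u$. The comparison principle on the truncated parabolic cylinders $B_R \times [-R, 0]$, combined with the barrier oscillation of order $K\mathcal{R}^{2-\beta} \to 0$ at the boundary, forces $u^* = u_*$ and delivers a continuous viscosity solution with $|u - P| \leq CK\mathcal{R}^{2-\beta}$; uniqueness follows by applying the same comparison argument to the difference of two solutions. When $f$ has compact support, $f \equiv 0$ outside a compact set, so the hypothesis is satisfied for every $\beta > 2$; taking $\beta = \zeta + 2$ yields \eqref{equ-enhanced-AsymBehav}.

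The main obstacle is the barrier construction: producing a single positive $\Psi$ which simultaneously (i) decays like $\mathcal{R}^{2-\beta}$, (ii) is a supersolution of the linear heat equation with source $\gtrsim \mathcal{R}^{-\beta}$, and (iii) has Hessian controlled well enough that the quadratic nonlinearity $N$ is strictly subordinate to the linear source. The explicit radial ansatz breaks down once $\beta$ exceeds $(2n+1)/2$, where $\rho^{2-\beta}$ ceases to be a supersolution of the heat equation, forcing either the heat-potential construction with its pointwise asymptotic estimate or a non-radial correction. A secondary technical issue is the comparison principle at ``parabolic infinity'' of $\mathbb{R}^{n+1}_-$, handled by truncation to $B_R \times [-R, 0]$ together with the decay of the barriers as $R \to \infty$.
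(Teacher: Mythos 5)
Your proposal takes a genuinely different route from the paper. Where the paper introduces a ``generalized symmetric'' function $s(x,t) = \widetilde\tau t + \tfrac12 x' D x$ (after forcing $D = A + KI > 0$ and subtracting a large constant $\Theta t$) and constructs barriers by solving scalar ODE initial value problems for a profile $W(s)$ whose asymptotics are controlled via an asymptotic-stability argument, you instead linearize the operator directly around the target polynomial $P$, diagonalize to the heat equation, and try to build a heat-supersolution barrier of the form $K\Psi$ with $\Psi \asymp \mathcal{R}^{2-\beta}$. Both strategies are reasonable in spirit, and yours has the appeal of making the role of the heat kernel and the constant-coefficient linearization transparent. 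But there are two concrete gaps that your write-up does not resolve.

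First and most seriously, the ``adjusting by a large additive constant on the bounded region'' step does nothing. The equation $u_t = \sum_i \arctan\lambda_i(D^2u) + f$ depends only on $u_t$ and $D^2 u$, so replacing $P + K\Psi$ by $P + K\Psi + C$ leaves the super-/sub-solution inequalities unchanged pointwise. If the supersolution inequality for $P + K\Psi$ fails at some point of the compact region $\{\mathcal{R} < R_*\}$ --- which it will in general, because there the nonlinear remainder $N(D^2(K\Psi))$ and the source $f$ are of the same size and $\Psi$ need not be a heat supersolution --- then a vertical shift cannot fix it. You would need either a genuinely different barrier on the compact set matched continuously to the outer one (e.g.~via a $\max$/$\min$ envelope with an interior supersolution), or, as the paper does, a single ansatz whose ODE for $W(s)$ is solvable on all of $[0,\infty)$ with invariant bounds $\underline{w} < W < w^*$, so that the barrier inequality holds everywhere from the start. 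Without one of these, your barriers are only local, and the Perron family is not pinched globally.

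Second, you acknowledge but do not resolve the issue of constructing $\Psi$ when $\beta$ is large. The radial power fails as a heat supersolution past the threshold you identify, and the asserted fallback (``the heat potential of $\mathcal{R}^{-\beta}$ yields the same asymptotic decay'') needs a pointwise two-sided estimate for the potential \emph{and} a verification that it remains a supersolution of the \emph{nonlinear} equation once the quadratic remainder $N$ is re-inserted. This is precisely the technical content that the paper's choice of $\Theta$ large in \eqref{equ-choice-Theta} (forcing $M(A,\Theta) > \beta/2$) and the subsequent asymptotic-stability ODE argument are designed to handle. Similarly, the uniqueness claim by ``applying the same comparison argument to the difference of two solutions'' on $\mathbb{R}^{n+1}_-$ is underspecified --- an ancient domain has no parabolic initial boundary, so you must either use the decay of the difference to pass to the limit in truncated cylinders (with a quantitative argument) or invoke a comparison principle without growth conditions at infinity, which the paper develops in Section 2 for exactly this reason.
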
 

Importantly, unlike the Lagrangian mean curvature equation \eqref{equ-Spl-Lagmean}, our results impose no restrictions on the matrix $A$, neither requiring $A$ to be positive definite nor close to a constant multiple of the identity matrix $I$. Such constraints, standard in fully nonlinear elliptic equations (see for instance \cite{Li-Dirichlet-SPL,Bao-Liu-Wang-DiricLagrangianEntire,Li-Wang-EntireDirichlet}), are herein relaxed to accommodate broader scenarios. 

  As a further application of Theorem \ref{thm-Main3-EntireDiri}, we have the following asymptotic behavior of solutions to the initial value problems, but only work for  initial value $u(x,0)$ being a quadratic polynomial. 
\begin{corollary}\label{Coro-InitialValProb-AsymBehav}
    Let $\overline T>0$,  $f\in C^0(\mathbb R^n\times[0,\overline T])$ satisfy $f(x,0)\equiv 0$ in $\mathbb R^n$ and
    \begin{equation}\label{equ-cond-InitialValProb-Coro}
        |f(x,t)|\leq C|x|^{-\beta},\quad\forall~|x|\geq 1,\quad 0\leq t\leq\overline T,
    \end{equation}
    for some $\beta>2$ and $C>0$.
    Let   $u\in C^0(\mathbb R^n\times[0,\overline T])$ be a viscosity solution to 
    \[
    \left\{
        \begin{array}{llll}
            \displaystyle u_t=\sum_{i=1}^n\arctan\lambda_i(D^2u)+f(x,t), & \text{in }\mathbb R^n\times[0,\overline T],\\
            \displaystyle u(x,0)=\frac{1}{2}x'Ax+b\cdot x+c, & \text{in }\mathbb R^n,
        \end{array}
    \right.
    \]
    where $A\in\mathrm{Sym}(n), b\in\mathbb R^n$ and $c\in\mathbb R$.
    Then  there exists $C>0$ such that 
    \[
    \left|u(x,t)-\left(\tau t+\frac{1}{2}x'Ax+b\cdot x+c\right)\right|
    \leq C|x|^{2-\beta},
    \]
    for all $(x,t)\in\mathbb R^n\times [0,\overline T]$, where $\tau:=\sum_{i=1}^n\arctan\lambda_i(A)$.
\end{corollary}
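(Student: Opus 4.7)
The plan is to deduce the estimate from Theorem \ref{thm-Main3-EntireDiri} by reducing the Cauchy problem on $\mathbb R^n\times[0,T]$ to an entire problem on $\mathbb R^{n+1}_-$. First, I would set $s:=t-T$, $\tilde u(x,s):=u(x,s+T)$ for $s\in[-T,0]$, $\tilde f(x,s):=f(x,s+T)$, and extend $\tilde f$ by zero to $\mathbb R^n\times(-\infty,-T)$. The extension is continuous on $\mathbb R^{n+1}_-$ since $f(\cdot,0)\equiv 0$, and because $\mathrm{supp}(\tilde f)$ lies in the bounded strip $-T\le s\le 0$ one has $\mathcal R^2(x,s)\le T+|x|^2$ on that support, so \eqref{equ-cond-InitialValProb-Coro} translates to $|\tilde f(x,s)|=O(\mathcal R^{-\beta}(x,s))$ as $\mathcal R\to\infty$.

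Next, I would invoke Theorem \ref{thm-Main3-EntireDiri} with right-hand side $\tilde f$ and data $(A,b,\tilde c)$, where $\tilde c:=c+\tau T$, to obtain a unique viscosity solution $v\in C^0(\mathbb R^{n+1}_-)$ with
\[
v(x,s)=\tilde P(x,s)+O(\mathcal R^{2-\beta}(x,s)),\qquad \tilde P(x,s):=\tau s+\tfrac12 x'Ax+b\cdot x+\tilde c,
\]
and note $\tilde P(x,-T)=u_0(x)$. The key step is to show $v\equiv\tilde P$ on $\mathbb R^n\times(-\infty,-T]$: there $\tilde f\equiv 0$, and after a further shift $r:=s+T$ the function $w(x,r):=v(x,r-T)$ is a viscosity solution on all of $\mathbb R^{n+1}_-$ of the homogeneous Lagrangian mean curvature flow equation with asymptotic $w=P_0+O(\mathcal R^{2-\beta})$, where $P_0(x,r):=\tau r+\tfrac12 x'Ax+b\cdot x+c$. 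Since $P_0$ is itself a classical solution with zero asymptotic error, the uniqueness clause of Theorem \ref{thm-Main3-EntireDiri} applied to the homogeneous equation with data $(A,b,c)$ forces $w\equiv P_0$; undoing the shift gives $v\equiv\tilde P$ on $s\le -T$, and in particular $v(\cdot,-T)=u_0$.

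On the bounded strip $[-T,0]$, both $v$ and $\tilde u$ are now viscosity solutions of the same Cauchy problem with initial datum $u_0$. A standard parabolic comparison principle for the uniformly parabolic Lagrangian mean curvature flow operator (applied under the quadratic spatial growth satisfied by $v$ via the $O(\mathcal R^{2-\beta})$ asymptotic and by $\tilde u$ via comparison against the quadratic barriers $u_0(x)\pm Ct$) yields $v=\tilde u$ throughout the strip. Combining this with the decay of $v-\tilde P$ and using $\mathcal R(x,s)\le\sqrt{T+|x|^2}\le C(1+|x|)$ on the strip gives $|\tilde u(x,s)-\tilde P(x,s)|\le C|x|^{2-\beta}$ for large $|x|$, uniformly in $s$. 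Reverting to $t=s+T$ and noting $\tilde P(x,t-T)=\tau t+\tfrac12 x'Ax+b\cdot x+c$ produces the stated estimate. For compactly supported $f$, the enhanced asymptotic \eqref{equ-enhanced-AsymBehav} supplies $O(|x|^{-\zeta})$ for arbitrary $\zeta>0$.

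The main obstacle is the uniqueness-extrapolation step pinning down $v\equiv\tilde P$ on the semi-infinite region $s\le-T$: as the excerpt explicitly points out, no Liouville-type rigidity is currently known for viscosity solutions of \eqref{equ-LagFlow-Extended}, so the argument must rely on re-applying the uniqueness clause of Theorem \ref{thm-Main3-EntireDiri} to the homogeneous equation on a shifted half-space rather than on an intrinsic rigidity theorem. A secondary technical point is the verification of the parabolic comparison principle on the unbounded-in-$x$ strip $[-T,0]$, which requires controlling the spatial growth of both $v$ and $\tilde u$ by suitable quadratic majorants.
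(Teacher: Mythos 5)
Your reduction to the entire half-space problem, the introduction of the shifted error $\tilde c:=c+\tau T$, and the identification of $v$ (the solution from Theorem \ref{thm-Main3-EntireDiri}) with the quadratic polynomial on $s\le -T$ via the uniqueness clause of that theorem all match the paper's strategy. In fact your uniqueness-extrapolation step makes explicit what the paper compresses into the phrase ``applying comparison principle with respect to $\Upsilon$ on $\mathbb R^n\times(-\infty,-T]$'': you shift back by $T$, observe that $w(x,r):=v(x,r-T)$ is an ancient viscosity solution of the homogeneous equation on $\mathbb R^{n+1}_-$ with asymptotic $P_0+O(\mathcal R^{2-\beta})$, and note that $P_0$ itself has zero error, so uniqueness in Theorem \ref{thm-Main3-EntireDiri} pins $w\equiv P_0$. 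That part is clean, and the $\mathcal R(x,r-T)\ge\mathcal R(x,r)$ observation (together with $2-\beta<0$) is exactly what is needed to transfer the asymptotic under the time shift.

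The genuine gap is in the final identification $v\equiv\tilde u$ on the strip. You invoke ``a standard parabolic comparison principle \dots applied under the quadratic spatial growth \dots by $\tilde u$ via comparison against the quadratic barriers $u_0(x)\pm Ct$.'' But establishing that $\tilde u$ lies between the quadratic barriers is itself an application of a comparison principle on the unbounded-in-$x$ strip, and every standard such principle requires an a priori growth hypothesis on $\tilde u$; you cannot bootstrap the growth bound by the very principle whose hypothesis you are trying to verify. This is not a secondary technicality but the central point: the hypotheses of the corollary give $u\in C^0$ and nothing on its growth, so no a priori bound on $\tilde u$ is available. The paper circumvents this precisely by having proven Theorem \ref{thm-Main2-Uniqueness-Extended} (the Barles--Biton--Ley type comparison of Section 2), which holds for arbitrary continuous viscosity sub/supersolutions with \emph{no} growth restriction at spatial infinity. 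That is the tool that should be cited here; once you apply Theorem \ref{thm-Main2-Uniqueness-Extended} to the pair $(\tilde u, v)$ and the pair $(v,\tilde u)$ on $\mathbb R^n\times[-T,0]$ with the common initial datum $u_0$, the identity $\tilde u\equiv v$ follows immediately, without any discussion of quadratic majorants. Replacing your barrier argument with a direct appeal to Theorem \ref{thm-Main2-Uniqueness-Extended} closes the gap and aligns the proof with the paper.

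One further minor remark: the polynomial decay $|\tilde u-\tilde P|\le C\mathcal R^{2-\beta}$ already controls all $(x,t)$ on the strip, not just large $|x|$; the corollary's statement ``for all $(x,t)\in\mathbb R^n\times[0,T]$'' uses that $\mathcal R\gtrsim|x|$ there, but on compacta the bound is trivial by continuity, so the qualifier ``for large $|x|$'' in your write-up can be dropped.
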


We note that substantial research has explored solutions to equation  \eqref{equ-LagFlow-Extended}  of various structures, such as the 
 self-similar solutions, translating solutions, and rotating solutions. See for instance 
Chau, Chen and He \cite{Chau-Chen-He-LagrangianMeanCurFlow-LipschitzGraph},
Chau, Chen and Yuan \cite{Chau-Chen-Yuan-Rigidity-SelfShrinking-CurvFlow,Chau-Chen-Yuan-LagCurvFlow-EntireLip-II}, Yuan \cite{Yuan-SPLEqu-InCollection}, Bhattacharya and Shankar \cite{Bhattacharya-Shankar-OptimalRegularity-LagMeanCur},  Bhattacharya-Wall \cite{Bhattacharya-Wall-HessianEst-LagranFlow}, Tsai, Tsui and Wang \cite{Tsai-Tsui-Wang-EntireSol-twoconvex-LagCurvFlow}, Lotay, Schulze and Se\'ekelyhidi \cite{Lotay-Schulze-Szekelyhidi-AncientSol-LagCurvFlow} and the references therein.

The paper is organized as follows. In Section \ref{seclabel-InitialValProb}, we utilize the barrier function constructed in \cite{Barles-Biton-Ley-UniqueWithoutGrowth} and prove a key comparison principle for the initial value problem. Section \ref{seclabel-Sec-ExistenceSol} is dedicated to establish the existence of ancient solutions prescribing asymptotic behavior at infinity, as formulated in \eqref{equ-Prob-EntireDiri-InThm}, and prove Theorem \ref{thm-Main3-EntireDiri}. In Section \ref{seclabel-Theorem-AsymBehav}, we conduct an asymptotic behavior analysis at infinity by employing the comparison principle proved in Section \ref{seclabel-InitialValProb} and the viscosity solution we construct in Theorem \ref{thm-Main3-EntireDiri}. Finally, by establishing an iteration scheme, we refine the convergence rate to an exponential type, which culminating in the proof of Theorems \ref{thm-Main1-AsymBehav}
 and \ref{thm-main5-GeneralThm}. 

\section{Initial  value problem without growth condition at infinity}\label{seclabel-InitialValProb}

In this section, we establish a key comparison principle for the initial value problem of \eqref{equ-LagFlow-Extended}, which serves as a fundamental ingredient in the proof of Theorem \ref{thm-Main1-AsymBehav}. Notably, the principle dispenses with the usual growth condition at infinity, which is nonstandard for general parabolic equations, for instance, the heat equation. Such uniqueness and comparison principle results without growth condition  were first established by Barles, Biton and Ley \cite[Theorems 2.1 and 3.1]{Barles-Biton-Ley-UniqueWithoutGrowth} for a class of fully nonlinear parabolic equations. Chen and Pang \cite{Chen-Pang-UniqueLagranMeanFlow} further extended these findings to the initial value problem of Lagrangian mean curvature flow \eqref{equ-LagFlow-Extended} on $\mathbb R^n\times[0,T]$ with $f\equiv 0$. Building upon the barrier function developed in \cite{Barles-Biton-Ley-UniqueWithoutGrowth,Chen-Pang-UniqueLagranMeanFlow}, we state the following theorem, which can be seen as a special instance of Theorem 2.1 in \cite{Barles-Biton-Ley-UniqueWithoutGrowth}.

\begin{theorem}\label{thm-Main2-Uniqueness-Extended}
    Let $T>0$, $f\in C^0(\mathbb R^{n}\times[0,T])$, and let $u,v\in C^0(\mathbb R^n\times[0,T])$ respectively be viscosity subsolution and supersolution to 
    \begin{equation}\label{equ-InitialValProb-Extended}
    \left\{
        \begin{array}{lll}
    \displaystyle    u_t=\sum_{i=1}^n\arctan\lambda_i(D^2u)+f(x,t), & \text{in }\mathbb R^n\times(0,T],\\
    u(x,0)=u_0(x), & \text{in }\mathbb R^n.
        \end{array}
    \right.
    \end{equation}
    Then, 
    \begin{equation}\label{equ-result-comparison-BBL}
    u(x,t)\leq v(x,t),\quad\forall~(x,t)\in\mathbb R^n\times [0,T].
    \end{equation} 
\end{theorem}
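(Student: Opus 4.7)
The plan is to follow the Barles--Biton--Ley framework already cited in the paper, as adapted by Chen--Pang to the Lagrangian mean curvature flow operator, and verify that its hypotheses apply to \eqref{equ-InitialValProb-Extended}. The structural feature that enables a comparison principle without any growth restriction on $u-v$ is that the nonlinearity $F(M):=\sum_{i=1}^{n}\arctan\lambda_i(M)$ is uniformly bounded (by $n\pi/2$), degenerate elliptic, and $1$-Lipschitz in each eigenvalue. This boundedness permits the use of a radial penalty whose contribution to the operator vanishes at infinity, compensating for the lack of decay of $u-v$.

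First, I would fix $\epsilon>0$ and construct a smooth radial barrier $\psi(x,t)=K(1+|x|^2)^{1/2}+Mt$ on $\mathbb R^n\times[0,T]$ for suitable universal constants $K,M>0$. A direct calculation yields $D\psi=Kx/(1+|x|^2)^{1/2}$ and shows that the eigenvalues of $D^2\psi$ decay like $|x|^{-1}$ at infinity; combined with $|\arctan s|\leq |s|$, this produces $|\sum_{i=1}^{n}\arctan\lambda_i(D^2\psi)|\leq C/(1+|x|)$, so choosing $M$ sufficiently large gives a uniform gap $\psi_t-F(D^2\psi)\geq \eta>0$ on $\mathbb R^n\times[0,T]$. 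With the penalty in place, I would apply the doubling of variables technique (Crandall--Ishii lemma from the User's Guide) to the auxiliary functional
\[
\Phi_\alpha(x,y,t,s)=u(x,t)-v(y,s)-\tfrac{\alpha}{2}|x-y|^2-\tfrac{\alpha}{2}(t-s)^2-\epsilon\psi(x,t),
\]
posed on $\mathbb R^n\times\mathbb R^n\times[0,T]\times[0,T]$. Because $\psi\to\infty$ as $|x|\to\infty$, the supremum of $\Phi_\alpha$ is attained on a bounded set at some point $(\hat x_\alpha,\hat y_\alpha,\hat t_\alpha,\hat s_\alpha)$, and the Crandall--Ishii lemma produces matching semijet matrices at these nearby points, giving a subsolution inequality at $(\hat x_\alpha,\hat t_\alpha)$ and a supersolution inequality at $(\hat y_\alpha,\hat s_\alpha)$.

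Subtracting the two inequalities, the $f$-terms cancel in the limit $\alpha\to\infty$ by continuity of $f$ and the standard facts $\hat x_\alpha-\hat y_\alpha\to 0$, $\hat t_\alpha-\hat s_\alpha\to 0$; the degenerate ellipticity of $F$ together with the matrix inequality from Crandall--Ishii controls the Hessian terms; and the strict gap $\epsilon\eta$ arising from the penalty contradicts the resulting inequality unless the supremum is attained on $\{t=0\}$, where the initial condition forces $u-v-\epsilon\psi\leq 0$. This yields $\sup_{\mathbb R^n\times[0,T]}(u-v-\epsilon\psi)\leq 0$, and sending $\epsilon\to 0^+$ gives \eqref{equ-result-comparison-BBL}. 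The main obstacle is the uniform verification of the strict supersolution property of $\psi$, which relies on the $|x|^{-1}$ decay of $D^2\psi$ coupled with the boundedness of $\arctan$; this calculation is exactly the one carried out in Barles--Biton--Ley and extended in Chen--Pang to the Lagrangian setting, and I would invoke it directly rather than reproduce it, noting only that the presence of a continuous $f$ on the right-hand side does not affect the argument since $f$ is evaluated at converging points and therefore cancels in the limit.
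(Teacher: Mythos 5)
Your proposal has a genuine gap at the step where you assert that the supremum of the doubled functional is attained. You introduce the penalty $\epsilon\psi$ with $\psi(x,t)=K(1+|x|^2)^{1/2}+Mt$ and claim that because $\psi\to\infty$ as $|x|\to\infty$, the supremum of
\[
\Phi_\alpha(x,y,t,s)=u(x,t)-v(y,s)-\tfrac{\alpha}{2}|x-y|^2-\tfrac{\alpha}{2}(t-s)^2-\epsilon\psi(x,t)
\]
is attained on a bounded set. This is precisely what fails here: the theorem imposes no growth condition on $u$ or $v$, so $u-v$ may grow faster than linearly in $|x|$, in which case $\Phi_\alpha$ is unbounded above and the Crandall--Ishii lemma cannot be applied at a maximum point. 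A linearly growing (indeed, any fixed polynomially growing) coercive penalty cannot compensate for arbitrary growth of $u-v$; this is the very obstruction that the Barles--Biton--Ley framework was designed to circumvent, and it is the reason the theorem is nonstandard. You also attribute to Barles--Biton--Ley and Chen--Pang the verification of the strict supersolution property of this $\psi$, but that is not their barrier.

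The actual mechanism in the paper, and in Barles--Biton--Ley, is different in a crucial way. First, Lemma \ref{lem-LinearizedEqu-Extended} reduces the problem by showing that $\omega:=u-v$ is a viscosity subsolution of the scalar equation $\omega_t-K(\mathrm{tr}(D^2\omega)^+)^\alpha=0$ with $\omega(\cdot,0)\le 0$, using the H\"older-type modulus bound $\sum\arctan\lambda_i(Y)-\sum\arctan\lambda_i(X)\le K(\mathrm{tr}(Y-X)^+)^\alpha$. Second, Lemma \ref{lem-FriendlyGiant-Barrier} supplies the ``friendly giant'' barrier
\[
\chi_R(x,t)=R^\beta\bigl((1+R^2)^{1/2}(1-ct)-(1+|x|^2)^{1/2}\bigr)^{-k},
\]
which is a strict supersolution of the scalar operator on the \emph{bounded} cylinder $\mathcal D(c,R)$ and blows up to $+\infty$ on the lateral parabolic boundary. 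Because $\mathcal D(c,R)$ is bounded and $\omega$ is continuous (hence locally bounded), the comparison of $\omega$ against $\chi_R$ takes place on a compact set and requires no control on the growth of $\omega$ at spatial infinity; the barrier's blow-up on the lateral boundary replaces any coercivity at infinity. Sending $R\to\infty$ then drives $\chi_R\to 0$ pointwise, yielding $\omega\le 0$ on $[0,T_c]$, and one iterates in $t$. To repair your argument you would need to replace the coercive penalty with this bounded-domain, boundary-blow-up barrier, together with the preliminary reduction to the scalar operator $A[\omega]$.
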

 Especially, the initial value condition implies that the viscosity subsolution $u$ and supersolution $v$ to \eqref{equ-InitialValProb-Extended} satisfy $u(x,0)\leq v(x,0)$. The uniqueness and comparison principle proved in \cite{Barles-Biton-Ley-UniqueWithoutGrowth} and \cite{Chen-Pang-UniqueLagranMeanFlow} can be summarized into the following, which also holds for upper semicontinuous viscosity subsolution $u$ and lower semicontinuous viscosity supersolution $v$. 

\begin{theorem}[Theorem 2.1 in \cite{Barles-Biton-Ley-UniqueWithoutGrowth} and Theorem 1.1 in \cite{Chen-Pang-UniqueLagranMeanFlow}]\label{thm-Comparison-Entire-BBL}
    Let $T>0$, $f\equiv 0$, and let $u,v\in C^0(\mathbb R^n\times[0,T])$ respectively be  viscosity subsolution and supersolution to \eqref{equ-InitialValProb-Extended}. Then  \eqref{equ-result-comparison-BBL} holds.

    More generally, if  $F:\mathbb R^n\times [0,T]\times\mathbb R^n\times\mathrm{Sym}(n)\rightarrow\mathbb R$ is a continuous function that satisfies 
    
\begin{enumerate}[(I)]
    \item\label{Cond-H1} For any $R>0$, there exists a continuous function $m_R:\mathbb R_+\rightarrow\mathbb R_+$ such that $m_R(0^+)=0$ and 
    \[
        F\left(y,t,\eta(x-y),Y\right)-F\left(x,t,\eta(x-y),X\right)\leqslant m_R\left(\eta|x-y|^2+|x-y|\right),
    \]
    for all $x,y\in\overline{B_R}$ and $t\in [0,T]$, whenever $X,Y\in\mathrm{Sym}(n)$ and $\eta>0$ satisfy 
    \[
        -3\eta\begin{pmatrix}I&0\\0&I\end{pmatrix}\leqslant\begin{pmatrix}X&0\\0&-Y\end{pmatrix}\leqslant3\eta\begin{pmatrix}I&-I\\-I&I\end{pmatrix}.
    \]
    \item\label{Cond-H2} There exist $0<\alpha<1$  and $K_1, K_2>0$ such that 
    \[
        F(x,t,p,X)-F(x,t,q,Y)\leqslant K_1|p-q|\left(1+|x|\right)+K_2\left(\mathrm{tr}(Y-X)^+\right)^\alpha,
    \]
    for every $(x,t,p,X), (x,t,q,Y)\in\mathbb{R}^n\times[0,T]\times\mathbb{R}^n \times \mathrm{Sym}(n).$
    \end{enumerate}     
    Let  $u,v\in C^0(\mathbb R^n\times[0,T])$  respectively be  viscosity subsolution and  supersolution to 
    \[ 
    \left\{
        \begin{array}{llll}
            u_t+F(x,t,Du,D^2u)=0, & \text{in }\mathbb R^n\times (0,T],\\
            u(x,0)=u_0(x), & \text{in }\mathbb R^n.
        \end{array}
    \right.
    \]
    Then the
     comparison principle result \eqref{equ-result-comparison-BBL} holds. 
\end{theorem}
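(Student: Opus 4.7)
The plan is to prove the general comparison principle under hypotheses \eqref{Cond-H1} and \eqref{Cond-H2}, which is the substantive content of the statement; the special case $f\equiv 0$ then follows once one verifies that $F(x,t,p,X) = -\sum_{i=1}^n\arctan\lambda_i(X)$ satisfies both structure conditions. Suppose, for contradiction, that $M_0 := \sup_{\mathbb R^n\times[0,T]}(u-v)>0$. The central obstacle is the absence of any growth hypothesis on $u,v$ at infinity: one cannot directly localize near a near-maximizer, and a compactification device in the form of a carefully tuned barrier function is therefore indispensable. Following \cite{Barles-Biton-Ley-UniqueWithoutGrowth}, I would introduce a radial barrier $\chi(x,t) = e^{\lambda t}\Psi(|x|)$ with $\Psi\colon[0,\infty)\to[1,\infty)$ smooth, nondecreasing, and just super-linear at infinity---for instance $\Psi(r)\sim(1+r^2)^{1/2}\log(2+r)$. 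Choosing $\lambda>0$ sufficiently large and exploiting \eqref{Cond-H2} (the linear-in-$|x|$ growth of the $p$-dependence and the sublinear dependence on $\mathrm{tr}(\cdot)^+$), one establishes the strict supersolution inequality
\[
\chi_t(x,t) + F\bigl(x,t,D\chi(x,t),D^2\chi(x,t)\bigr) \geq 1, \qquad (x,t)\in\mathbb R^n\times[0,T].
\]

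Next, for small parameters $\delta,\eta,\alpha>0$, I would study the doubled functional
\[
\Phi(x,y,t,s) := u(x,t) - v(y,s) - \tfrac{\eta}{2}|x-y|^2 - \tfrac{1}{2\alpha}(t-s)^2 - \delta\bigl(\chi(x,t)+\chi(y,s)\bigr).
\]
With $\delta$ chosen small relative to $M_0$, one has $\sup\Phi \geq M_0/2>0$, and the growth of $\chi$ confines the supremum to an interior maximizer $(\bar x,\bar y,\bar t,\bar s)$ with $\bar t,\bar s>0$, lying in a ball of radius depending only on $\delta$. The parabolic Jensen--Ishii lemma (see \cite{User'sGuide-ViscositySol}) then produces parabolic jets $(\tau_1,p,X)\in\overline{\mathcal P}^{2,+}u(\bar x,\bar t)$ and $(\tau_2,q,Y)\in\overline{\mathcal P}^{2,-}v(\bar y,\bar s)$ whose spatial slopes differ only by barrier terms, and whose barrier-subtracted matrices $X-\delta D^2\chi(\bar x,\bar t)$ and $Y+\delta D^2\chi(\bar y,\bar s)$ satisfy the matrix inequality in \eqref{Cond-H1}. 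Subtracting the viscosity subsolution inequality for $u$ from the supersolution inequality for $v$ and decomposing the $F$-difference into a common-jet part (bounded by \eqref{Cond-H1} on the compact ball fixed by $\chi$) and two barrier-correction parts (bounded by \eqref{Cond-H2}), the strict supersolution property of $\chi$ contributes a positive lower bound $2\delta$, yielding
\[
2\delta \leq m_R\bigl(\eta|\bar x-\bar y|^2 + |\bar x-\bar y|\bigr) + R_{\delta,\eta,\alpha},
\]
with $R_{\delta,\eta,\alpha}\to 0$ as $\eta\to\infty$ and $\alpha\to 0$. The standard penalization estimate forces $\eta|\bar x-\bar y|^2\to 0$ and $|\bar x-\bar y|\to 0$ in this limit, so the right-hand side vanishes and $2\delta\leq 0$, contradicting $\delta>0$.

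The main obstacle is the explicit construction and verification of the barrier $\chi$: it must grow fast enough at infinity to dominate the a priori unknown growth of $u,v$---thereby forcing the maximizer of $\Phi$ into a bounded set---yet slowly enough that its derivatives, when inserted into $F$, remain controlled by the one-sided structural growth permitted in \eqref{Cond-H2}. This tension, as in \cite{Barles-Biton-Ley-UniqueWithoutGrowth,Chen-Pang-UniqueLagranMeanFlow}, fixes $\Psi$ essentially up to the logarithmic correction and determines the admissible range of $\lambda$; verifying the strict supersolution inequality via \eqref{Cond-H2} with the computed $D\chi,\,D^2\chi$ is the most delicate computation in the proof. A secondary technical point is the extra bookkeeping at the Jensen--Ishii step caused by the $\delta(\chi(x,t)+\chi(y,s))$ penalization, but this fits cleanly into the standard framework once the barrier inequality is in hand.
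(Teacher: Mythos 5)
Your proposal founders at its very first structural step, and the failure is not a technicality but the whole point of the theorem. You penalize the doubled functional with a \emph{fixed} barrier $\delta\chi$, $\chi(x,t)=e^{\lambda t}\Psi(|x|)$ with $\Psi(r)\sim(1+r^2)^{1/2}\log(2+r)$, and assert that ``the growth of $\chi$ confines the supremum to an interior maximizer.'' Under the hypotheses of the theorem there is \emph{no} growth condition on $u$ and $v$ whatsoever (this is exactly what the title of \cite{Barles-Biton-Ley-UniqueWithoutGrowth} advertises): $u-v$ may grow like $e^{|x|^3}$ or faster, so $\sup\Phi$ can be $+\infty$ and in any case need not be attained, and the Jensen--Ishii step never gets off the ground. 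No fixed barrier with prescribed growth can dominate an arbitrary continuous function, so this confinement device is available only when one \emph{assumes} $u-v=o(\Psi)$ at infinity --- precisely the assumption the theorem dispenses with. A secondary flaw is the claimed strict supersolution inequality $\chi_t+F(x,t,D\chi,D^2\chi)\geq 1$: condition \eqref{Cond-H2} only controls differences $F(x,t,p,X)-F(x,t,q,Y)$, and $F(x,t,0,0)$ is not assumed bounded below in $x$, so the inequality cannot be verified for a general $F$ satisfying \eqref{Cond-H1}--\eqref{Cond-H2}; the unbounded zeroth-order part of $F$ only disappears after one passes to the \emph{difference} $u-v$.

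The mechanism actually used in \cite{Barles-Biton-Ley-UniqueWithoutGrowth,Chen-Pang-UniqueLagranMeanFlow}, and reproduced in this paper for its own Theorem \ref{thm-Main2-Uniqueness-Extended} (Lemmas \ref{lem-LinearizedEqu-Extended} and \ref{lem-FriendlyGiant-Barrier}), is different in kind: first linearize, i.e.\ use \eqref{Cond-H2} (together with \eqref{Cond-H1} and a doubling argument on bounded sets) to show that $\omega:=u-v$ is a viscosity subsolution of the one-sided inequality $\omega_t\leq K_1(1+|x|)|D\omega|+K_2\left(\mathrm{tr}(D^2\omega)^+\right)^\alpha$ with $\omega(\cdot,0)\leq 0$; then compare $\omega$ on the \emph{bounded} space-time region $\mathcal D(c,R)$ with the ``friendly giant'' barrier $\chi_R(x,t)=R^\beta\left((1+R^2)^{1/2}(1-ct)-(1+|x|^2)^{1/2}\right)^{-k}$, which is a strict supersolution there and blows up on the lateral boundary, so that no information about $u,v$ at spatial infinity is needed; finally send $R\to\infty$ and iterate in time. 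The existence of such a blow-up barrier hinges on the sublinearity $\alpha<1$ in \eqref{Cond-H2}; for $\alpha=1$ (the heat equation) the statement is false by Tychonov's example, so any argument --- like yours --- that does not use $\alpha<1$ in an essential way cannot prove the theorem. Your observation that the $\arctan$ operator satisfies \eqref{Cond-H1}--\eqref{Cond-H2} (so the $f\equiv0$ case follows from the general one) is correct and is also how the paper proceeds, but the core comparison argument needs to be replaced by the linearization-plus-friendly-giant scheme.
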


Since $\arctan$ is a smooth, bounded function with gradient no larger than 1, for any $0<\alpha<1$, we have 
  \[
  \arctan b-\arctan a\leq \left\{
    \begin{array}{llll}
        (b-a)^\alpha, & \forall~0<b-a\leq 1,\\
        \pi (b-a)^\alpha, & \forall~1\leq b-a.
    \end{array}
  \right.
  \]
  It follows that for any $0<\alpha<1$, there exists $K>0$ such that 
  \[
  \sum_{i=1}^n\arctan\lambda_i(Y)-\sum_{i=1}^n\arctan\lambda_i(X)\leq
  K(\mathrm{tr}(Y-X)^+)^\alpha,
  \]
  for all  $X,Y\in\mathrm{Sym}(n)$. (See also the proof in \cite{Chen-Pang-UniqueLagranMeanFlow}.) This  enables us to apply the strategy in the proof of Theorem \ref{thm-Comparison-Entire-BBL}.

\begin{lemma}\label{lem-LinearizedEqu-Extended}
    Let $T,f, u, v$ be as in Theorem \ref{thm-Main2-Uniqueness-Extended}. 
    Then $\omega:=u-v\in C^0(\mathbb R^n\times[0,T])$ is a  viscosity subsolution to 
\[
\left\{
    \begin{array}{lllll}
        A[\omega](x,t)= 0, & \text{in }\mathbb R^n\times(0,T),\\ 
        \omega(x,0)\leq 0, & \text{in }\mathbb R^n,
    \end{array}
\right.
\]
where the differential operator $A$ is given by
\[
A[\omega]:=\omega_t-K\left(\mathrm{tr}(D^2\omega)^+\right)^\alpha=0\quad\text{in }\mathbb R^n\times(0,T).
\]
\end{lemma}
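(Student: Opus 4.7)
My plan is to verify the viscosity subsolution inequality for $\omega := u - v$ by doubling the space variable in the classical fashion of Crandall--Ishii--Lions. The initial condition $\omega(\cdot, 0) \le 0$ follows directly from the sub- and super-solution conditions $u(\cdot, 0) \le u_0 \le v(\cdot, 0)$, so every nontrivial step concerns the differential inequality on $\mathbb R^n \times (0, T)$.

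First I would fix a smooth test function $\varphi$ such that $\omega - \varphi$ attains a strict local maximum at some $(x_0, t_0)$ with $t_0 \in (0, T)$, and for each $\eta > 0$ introduce the penalized functional
\[
\Phi_\eta(x, y, t) := u(x, t) - v(y, t) - \frac{\eta}{2}|x - y|^2 - \varphi(x, t)
\]
on a small closed cylinder around $(x_0, x_0, t_0)$. The standard penalization argument supplies a maximizer $(x_\eta, y_\eta, t_\eta) \to (x_0, x_0, t_0)$ with $\eta|x_\eta - y_\eta|^2 \to 0$ as $\eta \to \infty$, so the maximizer is interior and $t_\eta > 0$ for $\eta$ large.

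Next I would invoke the parabolic theorem on sums to produce real numbers $b_1^\eta - b_2^\eta = \varphi_t(x_\eta, t_\eta)$ and symmetric matrices $X_\eta, Y_\eta$ with $(b_1^\eta, \eta(x_\eta - y_\eta) + D\varphi, X_\eta) \in \bar{\mathcal P}^{2,+} u(x_\eta, t_\eta)$ and $(b_2^\eta, \eta(x_\eta - y_\eta), Y_\eta) \in \bar{\mathcal P}^{2,-} v(y_\eta, t_\eta)$, together with the matrix inequality
\[
\begin{pmatrix} X_\eta & 0 \\ 0 & -Y_\eta \end{pmatrix} \le \begin{pmatrix} \eta I + D^2\varphi & -\eta I \\ -\eta I & \eta I \end{pmatrix} + O(\eta^{-1}).
\]
Testing this on vectors of the form $(\xi, \xi)$, the $\eta I$ blocks cancel and I obtain $X_\eta - Y_\eta \le D^2\varphi(x_\eta, t_\eta) + O(\eta^{-1})\, I$; Weyl's minimax principle then yields $\mathrm{tr}(X_\eta - Y_\eta)^+ \le \mathrm{tr}(D^2\varphi(x_\eta, t_\eta))^+ + O(\eta^{-1})$.

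To close the argument, the sub- and super-solution inequalities read $b_1^\eta \le F(X_\eta) + f(x_\eta, t_\eta)$ and $b_2^\eta \ge F(Y_\eta) + f(y_\eta, t_\eta)$ with $F(M) := \sum_i \arctan \lambda_i(M)$; subtracting and invoking the H\"older-type estimate $F(X_\eta) - F(Y_\eta) \le K(\mathrm{tr}(X_\eta - Y_\eta)^+)^\alpha$ recorded immediately before the lemma gives
\[
\varphi_t(x_\eta, t_\eta) \le K\bigl(\mathrm{tr}(D^2\varphi(x_\eta, t_\eta))^+ + O(\eta^{-1})\bigr)^\alpha + f(x_\eta, t_\eta) - f(y_\eta, t_\eta).
\]
Letting $\eta \to \infty$, continuity of $f$ together with $x_\eta, y_\eta \to x_0$ eliminates the last difference, yielding $A[\varphi](x_0, t_0) \le 0$. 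The main obstacle is the matrix bookkeeping: the penalty $\tfrac{\eta}{2}|x-y|^2$ forces the individual Hessians $X_\eta, Y_\eta$ to grow like $\eta$, but the specific block structure of the penalty is designed precisely so that the \emph{difference} $X_\eta - Y_\eta$ remains controlled by $D^2\varphi$ alone, uniformly in $\eta$, and it is this one-sided Hessian bound that couples with the nonlinear H\"older estimate on $F$ to produce the operator $A$ in the limit.
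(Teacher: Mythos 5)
Your proposal is correct and follows the same doubling-of-variables route as the paper's own proof: penalize with $u(x,t)-v(y,t)-\tfrac{\eta}{2}|x-y|^2-\varphi(x,t)$, invoke the parabolic theorem on sums to obtain superjet/subjet elements with the block Hessian inequality, test on $(\xi,\xi)$ to cancel the large penalty blocks and isolate $X_\eta-Y_\eta\le D^2\varphi+o(1)$, feed this into the H\"older estimate $F(X)-F(Y)\le K(\mathrm{tr}(X-Y)^+)^\alpha$, and pass to the limit using continuity of $f$. You are somewhat more careful than the printed text in two spots where the paper has minor slips: the paper's penalized functional is written as $\omega(x,t)-\phi(x,t)-|x-y|^2/\epsilon^2$ (with no genuine $y$-dependence) and its displayed differential inequality omits the $D^2\phi$ shift in the superjet Hessian; your version $u(x,t)-v(y,t)-\cdots$ and your explicit accounting of the $D^2\varphi$ block in the Ishii inequality are the intended reading. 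Your localization to a compact cylinder is also a welcome precaution that the paper elides.
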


\begin{proof}
    Let $\phi\in C^2(\mathbb R^n\times[0,T])$ and $(\bar x,\bar t)\in \mathbb R^n\times(0,T)$ be a strict   maximum point such that 
    \[
    0=\omega(\bar x,\bar t)-\phi(\bar x,\bar t)=\max_{(x,t)\in \mathbb R^n\times[0,T]}(\omega(x,t)-\phi(x,t)).
    \]
    For any sufficiently small $\epsilon>0$, we assume the maximum 
    \[
    \max_{(x,y,t)\in \mathbb R^n\times\mathbb R^n\times[0,T]}
    \left(\omega(x,t)-\phi(x,t)-\frac{|x-y|^2}{\epsilon^2}\right)
    \]
    is achieved at points $(x_\epsilon,y_\epsilon,t_\epsilon)$. By a direct consideration, 
    \[
    (x_\epsilon,y_\epsilon,t_\epsilon)\rightarrow (\bar x,\bar x,\bar t)\quad\text{and}\quad \frac{|x_\epsilon-y_\epsilon|^2}{\epsilon^2}\rightarrow 0\quad\text{as }\epsilon\rightarrow 0.
    \]
    Following the proof of  Lemma 2.1 in \cite{Barles-Biton-Ley-UniqueWithoutGrowth},  there exist $a,b\in\mathbb R$, $X,Y\in\mathrm{Sym}(n)$ such that 
    \[
    (a,D\phi(x_\epsilon,t_\epsilon)+p_\epsilon,X+D^2\phi(x_\epsilon,t_\epsilon))\in\mathcal J^{2,+}u(x_\epsilon,t_\epsilon)\quad\text{and}\quad(b,p_\epsilon,Y)\in\mathcal J^{2,-}v(x_\epsilon,t_\epsilon),
    \]
    where $p_\epsilon:=\frac{2(x_\epsilon-y_\epsilon)}{\epsilon^2}$, $a-b=\phi_t(x_\epsilon,t_\epsilon)$, and $X\leq Y.$ 
    The symbols $\mathcal J^{2,+}u(x_\epsilon,t_\epsilon)$ and $\mathcal J^{2,-}v(x_\epsilon,t_\epsilon)$ are respectively  second order ``superjet'' and ``subjets'' of $u$ and $v$  at $(x_\epsilon,t_\epsilon)$, as defined in \cite{User'sGuide-ViscositySol}.
    Since $u$ and $v$ are, respectively, viscosity subsolution and supersolution to \eqref{equ-InitialValProb-Extended}, we have 
    \[
    \phi_t(x_\epsilon,t_\epsilon)+\sum_{i=1}^n\arctan\lambda_i(Y)-\sum_{i=1}^n\arctan \lambda_i(X)\leq f(x_\epsilon,t_\epsilon)-f(y_\epsilon,t_\epsilon).
    \]
    Therefore, sending $\epsilon\rightarrow 0^+$, it follows that
    \[
    \phi_t(\bar x,\bar t)-K(\mathrm{tr}(D^2\phi)^+)^{\alpha}\leq 0.
    \]
    This finishes the proof that $\omega$ is a viscosity subsolution to $A[\omega]=0$ in $\mathbb R^n\times(0,T)$. 
\end{proof}

 In Lemma \ref{lem-LinearizedEqu-Extended}, if  $u$ and $v$ are classical subsolution and  supersolution respectively, then the result follows immediately from the Newton-Leibnitz formula and the fact that $F(x,t,D^2u):=-\sum_{i=1}^n\arctan\lambda_i(D^2u)-f(x,t)$ satisfies condition \eqref{Cond-H2}.

\begin{lemma}[Lemma 2.2 in \cite{Barles-Biton-Ley-UniqueWithoutGrowth}]\label{lem-FriendlyGiant-Barrier}
    There exist $c>0$ and $k>l>0$ such that for  any $R>0$ sufficiently large, 
    \[
    \chi_R(x,t):=R^l \left(
        (1+R^2)^{\frac{1}{2}}(1-ct)-(1+|x|^2)^{\frac 12}
    \right)^{-k},
    \]
    is a strict supersolution to $A[w]=0$ in the domain
    \[
    \mathcal D(c,R):=\left\{(x,t)\in\mathbb R^n\times [0,T]~:~ct<\frac 12,~
    (1+|x|^2)^{\frac 12}<(1+R^2)^{\frac{1}{2}}(1-ct)
    \right\}.
    \]
\end{lemma}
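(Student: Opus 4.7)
The plan is to verify the differential inequality
\[
\chi_{R,t} - K\bigl((\mathrm{tr}\, D^2\chi_R)^+\bigr)^{\alpha} > 0 \quad\text{in } \mathcal D(c,R)
\]
by direct computation, after which the choice of the parameters $c$ and $k$ reduces to an exponent-balancing exercise. Writing $\psi(x,t):=(1+R^2)^{1/2}(1-ct)-(1+|x|^2)^{1/2}$ so that $\chi_R=R^\beta\psi^{-k}$, the domain $\mathcal D(c,R)$ is exactly $\{ct<1/2,\ \psi>0\}$, and there one has the two-sided bound $0<\psi\leq(1+R^2)^{1/2}$.

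First I would compute
\[
\psi_t=-c(1+R^2)^{1/2},\qquad \psi_{x_i}=-\frac{x_i}{(1+|x|^2)^{1/2}},\qquad \psi_{x_ix_j}=-\frac{\delta_{ij}}{(1+|x|^2)^{1/2}}+\frac{x_ix_j}{(1+|x|^2)^{3/2}},
\]
and deduce via the chain rule
\[
\chi_{R,t}=kc\,R^\beta(1+R^2)^{1/2}\psi^{-k-1}>0,
\]
\[
\mathrm{tr}(D^2\chi_R)=kR^\beta\psi^{-k-1}\frac{n+(n-1)|x|^2}{(1+|x|^2)^{3/2}}+k(k+1)R^\beta\psi^{-k-2}\frac{|x|^2}{1+|x|^2}.
\]
Both summands in the trace are nonnegative, and the elementary bounds $(n+(n-1)|x|^2)(1+|x|^2)^{-3/2}\leq n$ and $|x|^2/(1+|x|^2)<1$ yield
\[
\bigl(\mathrm{tr}\,D^2\chi_R\bigr)^+\leq k\,R^\beta\psi^{-k-1}\Bigl(n+\frac{k+1}{\psi}\Bigr).
\]

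Substituting and dividing through by $k^\alpha R^{\alpha\beta}\psi^{-\alpha(k+1)}$, the strict supersolution inequality reduces to
\[
c\,k^{1-\alpha}R^{(1-\alpha)\beta}(1+R^2)^{1/2}\,\psi^{-(1-\alpha)(k+1)}>K\Bigl(n+\tfrac{k+1}{\psi}\Bigr)^{\alpha}.
\]
I would then split the analysis into two regimes according to the size of $\psi$. Where $\psi\geq 1$, the right-hand side is bounded by the constant $K(n+k+1)^\alpha$, while the left-hand side, using $\psi\leq(1+R^2)^{1/2}$, is bounded below by a multiple of $R^{1+(1-\alpha)(\beta-k-1)}$; this exponent is positive provided $k<\beta+\alpha/(1-\alpha)$, leaving an admissible window strictly above $\beta$. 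Where $\psi<1$, the right-hand side is controlled by a multiple of $\psi^{-\alpha}$, and the left-hand side beats it whenever $(1-\alpha)(k+1)\geq \alpha$, which is automatic once $k$ is not too small. Both constraints are simultaneously satisfied by choosing any $k$ with $\beta<k<\beta+\alpha/(1-\alpha)$, after which $c$ can be fixed large enough to absorb the remaining constants, uniformly for $R$ sufficiently large.

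The main obstacle is that the $\psi\to 0^+$ regime (where $\chi_R$ blows up along $\partial\mathcal D(c,R)$, driven by the $\psi^{-k-2}|D\psi|^2$ term) and the $\psi$-comparable-to-$(1+R^2)^{1/2}$ regime pull the exponent $k$ in opposite directions: the former demands $k$ be large enough that the gradient-squared term dominates, while the latter imposes an upper bound on $k$ coming from the matching $R^\beta$ prefactor. The nondegeneracy $0<\alpha<1$ opens a nonempty window of admissible $k$, and the lower bound $k>\beta$ required in the statement is precisely what will be used when $\chi_R$ is invoked as a barrier to dominate quantities of order $R^\beta$ on portions of $\partial\mathcal D(c,R)$ in the comparison arguments of the subsequent sections.
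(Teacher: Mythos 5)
Your computation is correct and the argument closes. The paper does not actually prove this lemma---it is quoted verbatim from Barles--Biton--Ley, Lemma 2.2---so there is no internal proof to compare against; your direct verification (compute $\chi_{R,t}$ and $\mathrm{tr}(D^2\chi_R)$, bound the trace, then split into the regimes $\psi\ge 1$ and $\psi<1$) is the natural approach and matches what the cited reference carries out.

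Two small remarks. First, your three constraints $k>\beta$, $k<\beta+\alpha/(1-\alpha)$, and $(1-\alpha)(k+1)\ge\alpha$ are indeed simultaneously satisfiable for every $\alpha\in(0,1)$ and every $\beta>0$: the third reads $k\ge(2\alpha-1)/(1-\alpha)$, and since $(2\alpha-1)/(1-\alpha)=\beta+\alpha/(1-\alpha)-(1+\beta)<\beta+\alpha/(1-\alpha)$, the intersection of all three is nonempty; it would have strengthened the write-up to spell this out, because this compatibility check is exactly the place a reader would worry the argument might break. Second, the closing phrase that ``$c$ can be fixed large enough to absorb the remaining constants'' is slightly off: in both of your regimes the $R$-power on the left is strictly positive, so in fact any $c>0$ works once $R$ is large enough; the constant $c$ enters not to absorb constants here but to control the time horizon $T_c=\min\{T,1/(2c)\}$ in the iteration step of the comparison principle that follows.
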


\begin{proof}[Proof of Theorem \ref{thm-Main2-Uniqueness-Extended}]
The supersolution $\chi_R$ constructed in  Lemma \ref{lem-FriendlyGiant-Barrier} remains positive for all $(x,t)\in \mathcal D(c,R)$ and $\chi_R$ goes to $+\infty$ on the lateral side of $\mathcal D(c,R)$ i.e.,  
  \[
  \chi_R(x,t)\rightarrow\infty,\quad\text{as }(x,t)\rightarrow (x_0,t_0),\quad\forall~(x_0,t_0)\in \partial_p\mathcal D(c,R)\cap (\mathbb R^n\times(0,T_c]),
  \]
  where $\partial_p$ denote the parabolic boundary \cite{Book-Lieberman-SecondParboDE} and  $T_c:=\min\{T,\frac{1}{2c}\}$.
  
  Together with the results in Lemma \ref{lem-LinearizedEqu-Extended},  we may apply comparison principle with respect to $\omega$ and $\chi_R$ on $\mathcal D(c,R)\cap (\mathbb R^n\times (0,T_c])$. It follows that for all
  $(x,t)\in \mathcal D(c,R)\cap (\mathbb R^n\times (0,T_c])$, 
  \[
  (u-v)(x,t)=\omega(x,t)\leq R^l \left(
    (1+R^2)^{\frac{1}{2}}(1-ct)-(1+|x|^2)^{\frac 12}
\right)^{-k}.
  \]
  To conclude, sending  $R\rightarrow\infty$ in the inequality above for $t\in[0,T_c]$, we obtain 
  \[
  u(x,t)\leq v(x,t),\quad\forall~(x,t)\in \mathbb R^n\times[0,T_c].
  \]
  To prove the same property for all $t\in[0,T]$, iterating with respect to time variable  completes the proof of Theorem \ref{thm-Main2-Uniqueness-Extended}. 
\end{proof}

\section{Existence of viscosity solution to Lagrangian mean curvature flow}\label{seclabel-Sec-ExistenceSol}

In this section, we consider the existence of viscosity solution to Lagrangian mean curvature flow on $\mathbb R^{n+1}_-$, with prescribed asymptotic behavior at infinity. 
Theorem \ref{thm-Main3-EntireDiri}, the main result of this section, will serve as a key tool for establishing the asymptotic behavior results stated in Theorems \ref{thm-Main1-AsymBehav} and \ref{thm-main5-GeneralThm}.

\subsection{Translated Dirichlet problems}\label{seclabel-SubSec-TransExterDP}

In this subsection, we apply several necessary transformations to the  Dirichlet type problems as in  
\eqref{equ-Prob-EntireDiri-InThm}.
To start with,
without loss of generality, we assume  $A$ is a diagonal matrix with eigenvalues 
\[
a=(a_1,\cdots,a_n)=\lambda(A),\quad a_1\leq a_2\leq \cdots\leq a_n,
\]
$b=0$ and $c=0$. Otherwise, we take an orthogonal matrix $Q$ such that $Q'AQ=\mathrm{diag}(a_1,\cdots,a_n)$, and consider    $\overline u(x,t):=u(Qx,t)-b\cdot Qx-c$ instead.

Secondly, we take sufficiently large constant $K>0$ and $\Xi>\frac{n}{2}\pi$ such that $A+KI>0$ and $\Xi$ to be determined. Then,  problem \eqref{equ-Prob-EntireDiri-InThm}  is equivalent to the Dirichlet problem corresponding to $\widetilde u(x,t):=u(x,t)+\frac{1}{2}K|x|^2-\Xi t$ i.e., 
\begin{equation}\label{equ-ExteriorDiri-Translated}
    \left\{
        \begin{array}{lllll}
            \displaystyle P[\widetilde u]=\Xi-f(x,t), & \text{in }
            \mathbb R^{n+1}_-,\\
            \displaystyle \widetilde u(x,t)=\widetilde\tau  t+\frac{1}{2}x'Dx+
             O(\mathcal R^{2-\beta}),  & \text{as }\mathcal R(x,t)\rightarrow\infty,
        \end{array}
    \right.
\end{equation}
where  $\widetilde\tau :=\tau-\Xi<0$ and $P$ denote the parabolic operator 
\[
P[\widetilde u]:=- \widetilde u_t+\sum_{i=1}^n\arctan\lambda_i(D^2\widetilde u-KI).
\]
Hereinafter, $D:=A+KI>0$ is a symmetric matrix with eigenvalues $d_1,d_2,\cdots,d_n$.
 
Let 
\[
s(x,t):=\widetilde\tau t+\frac{1}{2}x'Dx=\widetilde\tau t+\frac{1}{2}\sum_{i=1}^nd_ix_i^2,\quad\forall~(x,t)\in\mathbb R^{n+1}_-.
\]
Since $\widetilde\tau <0$ and $d_i> 0$, we have $s(x,t)\geq 0$ on $\mathbb R^{n+1}_-$, with the equality holds only at $(x,t)=(0,0)$, and 
\[
s(x,t)\rightarrow\infty\quad\text{if and only if }-t+|x|^2\rightarrow\infty.
\] 
We say a function $v(x,t)$ is generalized symmetric with respect to $s$, if there exists a scalar function $V:[0,+\infty)\rightarrow\mathbb R$ such that 
\begin{equation}\label{equ-def-GeneraSym}
    v(x,t)=V(s(x,t)),\quad\forall~(x,t)\in\mathbb R^{n+1}_-. 
\end{equation}
When $f$ satisfies condition \eqref{equ-cond-RHS-ConverSpeed}, there exist monotone increasing smooth function $\underline f$ and monotone decreasing smooth function $\overline f$ defined on $[0,+\infty)$ such that 
\begin{equation}\label{equ-choice-fBarrier}
\underline{f}(s(x,t))\leq -f(x,t)\leq \overline f(s(x,t)),\quad\forall~(x,t)\in \mathbb R^{n+1}_-,
\end{equation}
with 
\begin{equation}\label{equ-chioce-fBarrier-converSpeed}
\underline f(s),~\overline f(s)=O(s^{-\frac{\beta}{2}})\quad\text{as }s\rightarrow\infty.
\end{equation}
% When $f\equiv0$, for any $\zeta>0$, we may take $\beta$ larger than $\zeta+2$ and obtain monotone functions $\underline f, \overline f$ satisfying \eqref{equ-choice-fBarrier} and \eqref{equ-chioce-fBarrier-converSpeed}.
Especially, $\underline f(s)<0, \overline f(s)>0$ for all $s\in[0,+\infty)$. Now, we fix $\Xi>\frac n2\pi$ sufficiently large such that
\begin{equation}\label{equ-choice-Theta}
    M(A,\Xi):=
    \left(\Xi-\tau+\sum_{i=1}^n\dfrac{a_i}{1+a_i^2} \right)\left(\sum_{i=1}^n\dfrac{1}{1+a_i^2} \right)^{-1}>\frac{\beta}{2}>1,
\end{equation}
\begin{equation}\label{equ-choice-Theta-1}
    \Xi-\tau +\sum_{i=1}^n\min\{a_i,0\}>0,\quad\text{and}\quad 
    \Xi+\underline f(0)>\frac{n}{2}\pi.
\end{equation}
 
Following the proof of Lemmas 2.7 and 2.8 in \cite{Zhou-Gong-Bao-AncientSolution-paraboicMA}, we obtain the following necessary lemmas of Perron's method for $P[\widetilde u]=\Xi-f(x,t)$. To begin with, we introduce the concepts of weak viscosity sub/supersolutions, 
building on the definitions originally proposed for parabolic Monge-Amp\`ere equations (see \cite{Zhan-PhdThesis-ParboMA,Zhou-Gong-Bao-AncientSolution-paraboicMA} and related literature). 

\begin{definition}[Weak viscosity subsolution and supersolution]
    Let $\Omega\subset\mathbb R^{n+1}$ be an open set in parabolic sense \cite{Book-Lieberman-SecondParboDE}. We say a function $u$ is a weak viscosity subsolution to 
    \begin{equation}\label{equ-lagflow-ondomains}
        P[u](x,t)=\Xi-f(x,t)\quad\text{in }\Omega,
    \end{equation}
    if the upper semicontinuous (USC for short) envelop of $u$ i.e., 
    \[
        u^*(x,t):=\lim_{r\rightarrow 0}\sup_{(x^\sharp,t^\sharp)\in \mathcal Q_r(x,t)}u(x^\sharp,t^\sharp)
    \]
    is finite and a viscosity subsolution to \eqref{equ-lagflow-ondomains}, where 
    \[
       \mathcal Q_r(x,t):=\{(x^\sharp,t^\sharp)\in\Omega~:~|x^\sharp-x|<r,~|t^\sharp-t|<r^2\}.
    \]
    Similarly, one uses lower semicontinuous (LSC for short) envelop $u_*:=-(-u)^*$ for weak viscosity supersolutions. If $u$ is both weak viscosity subsolution and weak viscosity supersolution, it is called a weak viscosity solution.
\end{definition}
 
\begin{lemma}\label{lem-Maintainess-Subsol}
    Let $\Omega\subset\mathbb R^{n+1}_-$ be an open domain in parabolic sense. Let $\mathcal S$ denote any non-empty set of weak viscosity subsolutions to \eqref{equ-lagflow-ondomains}.
    Suppose 
    \[
    u(x,t):=\sup\{v(x,t)~:~v\in\mathcal S\}<\infty,\quad\forall~(x,t)\in\Omega.
    \]
    Then $u$ is a weak viscosity subsolution to \eqref{equ-lagflow-ondomains}.
\end{lemma}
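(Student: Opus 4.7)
The plan is to adapt the classical Perron's method for viscosity subsolutions (as in \cite{User'sGuide-ViscositySol}) to the weak viscosity framework, where the test is performed on the USC envelope $u^*$ against smooth $\varphi$ at maximum points lying in parabolic neighborhoods $A_r(\hat x,\hat t)$. The two issues that require attention are (i) verifying that $u^*$ is finite, and (ii) producing a sequence of maximum points of $v_k^*-\varphi$ converging to a prescribed strict maximum of $u^*-\varphi$.

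First I would check that $u^*$ is finite. Each $v\in\mathcal S$ has a finite USC envelope $v^*$ by the definition of weak viscosity subsolution, and $u=\sup_v v\le \sup_v v^*$. Local boundedness from above of $u$ (implicit from the hypothesis $u<\infty$ together with the USC envelope construction on relatively compact subsets) then yields $u^*<\infty$ on $\Omega$. Next, fix $\varphi\in C^2(\Omega)$ and a point $(\hat x,\hat t)\in \Omega$ where $u^*-\varphi$ attains a local maximum. Replacing $\varphi$ by $\varphi+|x-\hat x|^4+(t-\hat t)^2$, which respects the parabolic scaling of $A_r$ and does not alter $\varphi_t(\hat x,\hat t)$ or $D^2\varphi(\hat x,\hat t)$, I may assume the maximum is strict on a closed parabolic cylinder $\overline{A_r(\hat x,\hat t)}\subset \Omega$.

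Now I would approximate: by definition of $u^*$, pick $(x_k,t_k)\in\Omega$ with $(x_k,t_k)\to(\hat x,\hat t)$ and $u(x_k,t_k)\to u^*(\hat x,\hat t)$; by definition of $u$ as a supremum, pick $v_k\in\mathcal S$ with $v_k(x_k,t_k)\ge u(x_k,t_k)-1/k$, so in particular $v_k^*(x_k,t_k)\ge u(x_k,t_k)-1/k$. Since $v_k^*$ is USC on the compact set $\overline{A_r(\hat x,\hat t)}$, the difference $v_k^*-\varphi$ attains its maximum there at some $(y_k,s_k)$. Using $v_k^*\le u^*$ pointwise together with the strict maximum property at $(\hat x,\hat t)$, a standard semicontinuity argument forces $(y_k,s_k)\to (\hat x,\hat t)$ and $v_k^*(y_k,s_k)\to u^*(\hat x,\hat t)$. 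For $k$ large, $(y_k,s_k)$ therefore lies in the open interior $A_r(\hat x,\hat t)$ and is a local maximum of $v_k^*-\varphi$.

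Finally, since $v_k$ is a weak viscosity subsolution,
\[
    -\varphi_t(y_k,s_k)+\sum_{i=1}^n\arctan\lambda_i\bigl(D^2\varphi(y_k,s_k)-KI\bigr)\ge \Theta-f(y_k,s_k),
\]
and sending $k\to\infty$ via continuity of $\varphi_t$, $D^2\varphi$, the map $M\mapsto\sum_i\arctan\lambda_i(M-KI)$, and $f$ gives the subsolution inequality at $(\hat x,\hat t)$. This shows $u^*$ is a viscosity subsolution, hence $u$ is a weak viscosity subsolution as claimed. I expect the main obstacle to be the bookkeeping in the third paragraph: guaranteeing that the maxima $(y_k,s_k)$ remain interior to $A_r(\hat x,\hat t)$ and genuinely converge to $(\hat x,\hat t)$. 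The quartic-in-$x$ and quadratic-in-$t$ penalty is chosen exactly so that the strict-maximum property is compatible with the parabolic shape of $A_r$, and this is the only place where the parabolic nature of the neighborhood interacts non-trivially with the argument.
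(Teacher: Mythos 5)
Your argument is correct and follows the same overall strategy as the paper: penalize $\varphi$ by $|x-\hat x|^4+(t-\hat t)^2$ to make the maximum strict, extract $v_k\in\mathcal S$ nearly realizing the supremum, locate the maxima $(y_k,s_k)$ of $v_k^*-\varphi$, show they converge to $(\hat x,\hat t)$, and pass to the limit in the subsolution inequality.

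One place where your write-up is actually tighter than the paper's is the approximation step. The paper chooses $v_k$ satisfying $v_k(\bar x,\bar t)>u(\bar x,\bar t)-1/k$ and then asserts $(u^*-\psi)(\bar x,\bar t)-1/k\le (v_k^*-\psi)(\bar x,\bar t)$, which amounts to $v_k^*(\bar x,\bar t)\ge u^*(\bar x,\bar t)-1/k$; but the choice of $v_k$ only guarantees $v_k^*(\bar x,\bar t)>u(\bar x,\bar t)-1/k$, and in general $u(\bar x,\bar t)\le u^*(\bar x,\bar t)$ with possible strict inequality, so the displayed lower bound is only $-\delta-1/k$ with $\delta=u^*(\bar x,\bar t)-u(\bar x,\bar t)\ge 0$, which does not force $(y_k,s_k)\to(\bar x,\bar t)$. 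Your version avoids this: by first picking $(x_k,t_k)\to(\hat x,\hat t)$ with $u(x_k,t_k)\to u^*(\hat x,\hat t)$ (possible by the definition of the USC envelope over the parabolic neighborhoods $A_r$), and then $v_k$ with $v_k(x_k,t_k)\ge u(x_k,t_k)-1/k$, you obtain $(v_k^*-\psi)(x_k,t_k)\to 0$ directly, and the squeeze against the strict-maximum penalty gives $(y_k,s_k)\to(\hat x,\hat t)$ without any hypothesis that $u$ be upper semicontinuous at $(\hat x,\hat t)$. This is the standard safeguard from the User's Guide and is the right way to phrase the step; otherwise your proof matches the paper's.
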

\begin{proof}
    For any $(x,t)\in\Omega$ and $r>0$, let
    \[
        Q_r(x,t):=\{(x^\sharp,t^\sharp)\in \Omega~:~|x^\sharp-x|<r,~t-r^2<t^\sharp\leq t\}.
    \] 
    From the definition of weak viscosity subsolutions, we only need to prove that for all $\varphi\in C^{2,1}(\Omega)$, if for some interior point $(\bar x,\bar t)\in\Omega$ such that 
    \[
    (u^*-\varphi)(\bar x,\bar t)=\max_{\overline{Q_r(\bar x,\bar t)}}(u^*-\varphi),
    \]
    for some $r>0$, then 
    \[
       P[\varphi](\bar x,\bar t)\geq \Xi-f(\bar x,\bar t).
    \] 
    Up to a translation on the value of function, we may assume without loss of generality that  $(u^*-\varphi)(\bar x,\bar t)=0$.
    Take 
    \[
        \psi(x,t)=\varphi(x,t)+|x-\bar{x}|^4+|t-\bar{t}|^2.
    \]
    Then, from the definition of $(\bar x,\bar t)$, $u^*-\psi$ attains its strict maximum point in a neighborhood $\overline{Q_r(\bar x,\bar t)}$. Especially, 
    \[
    (u^*-\psi)(x,t)\leq -(x-\bar x)^4-|t-\bar t|^2,\quad\forall~(x,t)\in Q_r(\bar x,\bar t).
    \]

    From the definition of $u$, for any $k$, there exists $v_k\in\mathcal S$ such that 
    \[
    u(\bar x,\bar t)-\frac{1}{k}<v_k(\bar x,\bar t)\leq u(\bar x,\bar t).
    \]
    Furthermore,
    \[
    (v_k^*-\psi)(x,t)\leq (u^*-\psi)(x,t)\leq -(x-\bar x)^4-|t-\bar t|^2,\quad\forall~(x,t)\in Q_r(\bar x,\bar t).
    \]
    It follows that the maximum of $v_k^*-\psi$ in $Q_r(\bar x,\bar t)$ is attained at some point $(y_k,s_k)$ and
    \[
    -\frac{1}{k}=(u^*-\psi)(\bar x,\bar t)-\frac{1}{k}\leq (v^*_k-\psi)(\bar x,\bar t)\leq (v^*_k-\psi)(y_k,s_k).
    \]
    Combining the estimates above, 
    \[
    -\frac{1}{k}\leq -|y_k-\bar x|^4-|s_k-\bar t|^2\leq 0.
    \]
    Sending $k\rightarrow\infty$, it follows that $(y_k,s_k)\rightarrow (\bar x,\bar t)$
    as $k\rightarrow\infty.$

    Since $v_k$ is a weak viscosity subsolution, it follows that $ P[\psi](y_k,s_k) \geq \Xi-f(y_k,s_k).$
    By the regularity $\psi\in C^{2,1}(\Omega)$, sending $k\rightarrow\infty$ implies $P[\psi](\bar x,\bar t)\geq \Xi-f(\bar x,\bar t)$. 
    At the point $(\bar x,\bar t)$, we have $\psi_t=\varphi_t$ and $D^2\psi=D^2\varphi$. This finishes the proof of the lemma. 
\end{proof}

\begin{lemma}\label{lem-PerronMethod-WeakSol}
    Let $g$ be a weak viscosity supersolution to \eqref{equ-lagflow-ondomains}. Let 
    \[
    \mathcal S_g:=\{v~:~v\text{ is a weak viscosity subsolution to \eqref{equ-lagflow-ondomains} and }v\leq g\},
    \]
    and 
    \[
    u(x,t):=\sup\{v(x,t)~:~v\in\mathcal S_g\},\quad\forall~(x,t)\in\Omega.
    \]
    If $\mathcal S_g$ is not empty, then $u$ is a weak viscosity solution to \eqref{equ-lagflow-ondomains}.
\end{lemma}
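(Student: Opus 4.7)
The plan is to dispatch the subsolution direction immediately via Lemma \ref{lem-Maintainess-Subsol}: since $u=\sup\mathcal S_g$ is the supremum of a non-empty family of weak viscosity subsolutions to \eqref{equ-lagflow-ondomains} and $u\leq g<\infty$, Lemma \ref{lem-Maintainess-Subsol} gives at once that $u$ is a weak viscosity subsolution. The heart of the argument is therefore to show that $u$ is also a weak viscosity supersolution, i.e.\ that $u_*$ is a viscosity supersolution of \eqref{equ-lagflow-ondomains}. I will do this by a standard Perron bump-up argument by contradiction, adapted to the USC/LSC envelope framework used above.

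Suppose $u_*$ fails the supersolution property at some interior $(\bar x,\bar t)\in\Omega$: there exists $\varphi\in C^{2,1}(\Omega)$ such that $u_*-\varphi$ attains a strict local minimum with value $0$ at $(\bar x,\bar t)$ and $P[\varphi](\bar x,\bar t)>\Theta-f(\bar x,\bar t)$. A preliminary observation is that the gap $\sigma:=g_*(\bar x,\bar t)-u_*(\bar x,\bar t)$ must be strictly positive, because $u\leq g$ yields $u_*\leq g_*$, while an equality at $(\bar x,\bar t)$ would force $\varphi$ to touch $g_*$ from below there, contradicting the viscosity supersolution property of $g_*$.

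Next, I will localise a classical strict supersolution that sits above $u$ but below $g$. For small parameters $\epsilon,r>0$ to be chosen, set
\[
\psi(x,t):=\varphi(x,t)+\epsilon-\bigl(|x-\bar x|^4+|t-\bar t|^2\bigr).
\]
Continuity of $D^2\varphi$, $\varphi_t$, and $f$ lets me shrink $r$ so that $P[\psi]>\Theta-f$ on $Q_r(\bar x,\bar t)$ (the parabolic cylinder defined in the proof of Lemma \ref{lem-Maintainess-Subsol}); continuity of $g_*-\varphi$ together with the gap $\sigma>0$ gives $\psi<g_*$ on $\overline{Q_r(\bar x,\bar t)}$ as soon as $\epsilon<\sigma/2$; and the strict minimality of $u_*-\varphi$ at $(\bar x,\bar t)$ ensures $\psi<u_*$ on the boundary ring $\partial_pQ_r(\bar x,\bar t)\cap\Omega$ provided $\epsilon\ll r^4$. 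Define the glued candidate
\[
w(x,t):=
\begin{cases}
\max\{u(x,t),\psi(x,t)\}, & (x,t)\in Q_r(\bar x,\bar t),\\
u(x,t), & (x,t)\in\Omega\setminus Q_r(\bar x,\bar t).
\end{cases}
\]
Because $\psi<u_*\leq u^*$ near $\partial_pQ_r(\bar x,\bar t)$, the gluing is compatible and $w$ inherits the weak viscosity subsolution property: inside $Q_r(\bar x,\bar t)$ it is the supremum of $\{u,\psi\}$, to which Lemma \ref{lem-Maintainess-Subsol} applies, and outside it coincides with $u$. The bound $w\leq g$ holds by construction. Finally, taking any sequence $(x_k,t_k)\to(\bar x,\bar t)$ realising $u(x_k,t_k)\to u_*(\bar x,\bar t)=\varphi(\bar x,\bar t)$, continuity of $\psi$ and the strict gap $\psi(\bar x,\bar t)-\varphi(\bar x,\bar t)=\epsilon$ give $w(x_k,t_k)>u(x_k,t_k)$ for large $k$, contradicting the maximality of $u$ in $\mathcal S_g$.

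The main obstacle I expect is the simultaneous balancing in the middle step: arranging the three smallness conditions on $\epsilon$ and $r$ (classical strict supersolution inequality on $Q_r$, strict dominance below $g_*$, and strict dominance below $u_*$ on the parabolic boundary) so that the bump $\psi$ is genuinely admissible in $\mathcal S_g$ yet strictly lifts $u$ at $(\bar x,\bar t)$. A secondary subtlety, specific to the weak (envelope-based) formulation rather than the pointwise one, is verifying that the pointwise maximum operation behaves well with respect to the USC envelope, so that $w^*$ is still a viscosity subsolution; this is routine from the inequality $(\max\{u,\psi\})^*=\max\{u^*,\psi\}$ valid for continuous $\psi$, but should be spelled out explicitly when applying Lemma \ref{lem-Maintainess-Subsol}.
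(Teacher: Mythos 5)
Your proof is correct and follows essentially the same Perron bump-up argument as the paper: both handle the subsolution direction immediately via Lemma \ref{lem-Maintainess-Subsol}, then argue the supersolution property by contradiction, exploiting the strict gap $u_*(\bar x,\bar t)<g_*(\bar x,\bar t)$ (forced by the supersolution property of $g$) and gluing a raised classical strict subsolution onto $u$ near $(\bar x,\bar t)$ to contradict maximality. The only cosmetic difference is that you build the quartic penalty directly into the bump $\psi=\varphi+\epsilon-|x-\bar x|^4-|t-\bar t|^2$ and tune $(\epsilon,r)$, whereas the paper first normalizes $\varphi\mapsto\varphi-|x-\bar x|^4-|t-\bar t|^2$ and then bumps by the constant $\delta^2$; the two parametrizations are equivalent.
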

\begin{proof}
    By the results in Lemma \ref{lem-Maintainess-Subsol}, $u$ is a weak viscosity subsolution, and it remains to prove it is also a weak viscosity supersolution. Arguing by contradiction, there exist $\varphi\in C^{2,1}(\Omega)$ and $(\bar x,\bar t)\in\Omega$ such that 
    \[
    0=(u_*-\varphi)(\bar x,\bar t)=\min_{\overline{Q_r(\bar x,\bar t)}}(u_*-\varphi),
    \]
    for some $r>0$, and $P[\varphi](\bar x,\bar t)>\Xi-f(\bar x,\bar t)$. 
    We may assume without loss of generality that 
    \[
    (u_*-\varphi)(x,t)\geq |x-\bar x|^4+|t-\bar t|^2,\quad\forall~(x,t)\in Q_r(\bar x,\bar t),
    \]
    otherwise we replace $\varphi$ by $\varphi(x,t)-|x-\bar x|^4-|t-\bar t|^2$ instead.

    Since $u\leq g$, we have $\varphi\leq u_*\leq g_*$ in $Q_r(\bar x,\bar t)$. Furthermore, we claim 
    \[
    u_*(\bar x,\bar t)=\varphi(\bar x,\bar t)<g_*(\bar x,\bar t).
    \]
    Arguing by contradiction, we suppose $u_*(\bar x,\bar t)=\varphi(\bar x,\bar t)=g_*(\bar x,\bar t)$, then $g_*$ was touched by $\varphi$ from below at $(\bar x,\bar t)$, which implies 
    \[
        P[g_*](\bar x,\bar t)\geq P[\varphi](\bar x,\bar t)>\Xi-f(\bar x,\bar t).
    \]
    This contradicts to the assumption that $g$ is a weak viscosity supersolution to \eqref{equ-lagflow-ondomains}.

    Since $f$ is continuous  and $\varphi\in C^{2,1}(\Omega)$, there exists sufficiently small $\delta>0$ such that 
    \[
    P[\varphi](x,t)\geq \Xi-f(x,t)\quad\text{and}\quad
    \varphi(x,t)+\delta^4\leq g_*(x,t),
    \] 
    for $(x,t)\in\overline{\mathcal Q_{2\delta}(\bar x,\bar t)}\subset\Omega$. Then, $\varphi(x,t)+\delta^4$ is a subsolution in $\mathcal Q_{2\delta}$ and 
    \[
    u(x,t)\geq u_*(x,t)\geq \varphi(x,t)+|x-\bar x|^4+|t-\bar t|^2\geq \varphi(x,t)+\delta^4\quad\text{in }\mathcal Q_{2\delta}\setminus \mathcal Q_\delta(\bar x,\bar t).
    \]
    Let 
    \[
    w(x,t):=\left\{
        \begin{array}{llllll}
            \max\{\varphi(x,t)+\delta^2,u(x,t)\}, & (x,t)\in \mathcal Q_{\delta}(\bar x,\bar t),\\
            u(x,t), & (x,t)\in\Omega\setminus \mathcal Q_\delta(\bar x,\bar t).
        \end{array}
    \right.
    \]
    Then $w$ remains a weak viscosity subsolution to \eqref{equ-lagflow-ondomains}. From the definition of $u$ and the fact that $w\leq g$, we have $u\geq w.$

    On the other hand, 
    \[
    0=(u_*-\varphi)(\bar x,\bar t)=\lim_{r\rightarrow 0}\inf_{(x^\sharp,t^\sharp)\in \mathcal Q_r(\bar x,\bar t)}(u-\varphi)(x^\sharp,t^\sharp).
    \]
    Hence, there exist a point $(x^\sharp,t^\sharp)\in \mathcal Q_{\delta}(\bar x,\bar t)$ with $\delta>0$ sufficiently small such that 
    \[
    u(x^\sharp,t^\sharp)-\varphi(x^\sharp,t^\sharp)<\delta^4,\quad\text{i.e.,}\quad 
    u(x^\sharp,t^\sharp)<\varphi(x^\sharp,t^\sharp)+\delta^4\leq w(x^\sharp,t^\sharp).
    \]
    This becomes a contradiction and finishes the proof of the desired result. 
\end{proof}

 \subsection{Construction of subsolutions}\label{seclabel-Subsec-ConstrucSubsol}

 \begin{lemma}\label{lem-Subsol-EigenValEst}
   Let $V\in C^2([0,+\infty))$, $V'\geq 1, V''\leq 0$ and $v$ be as in \eqref{equ-def-GeneraSym}.
   Then $v\in C^{2,1}(\mathbb R^{n+1}_-)$ is   generalized symmetric with respect to $s$, and  satisfies
   \begin{equation}\label{equ-Est-EigenVal-Subsol}
   d_iV'(s)+\sum_{j=1}^nd_j^2x_j^2V''(s)\leq\lambda_i(D^2v(x,t))\leq d_iV'(s),\quad\forall~1\leq i\leq n.
   \end{equation}
   Furthermore, 
   \[
   P[v](x,t)\geq -\widetilde\tau V'+\sum_{i=1}^n\arctan\left(a_iV'+\sum_{j=1}^nd_j^2x_j^2V''\right),\quad\forall~(x,t)\in\mathbb R^{n+1}_-.
   \]
 \end{lemma}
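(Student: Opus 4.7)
The plan is purely computational: write out $D^2 v$ explicitly, read off the eigenvalue bounds from its diagonal-plus-rank-one structure, and then plug these into the definition of $P$. The $C^{2,1}_{loc}$ regularity of $v$ is immediate from $V\in C^2_{loc}$ and the smoothness of $s$, and the generalized symmetry is by definition.

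First I would compute the derivatives of $v(x,t)=V(s(x,t))$ with $s(x,t)=\widetilde\tau t+\tfrac12\sum_j d_j x_j^2$. This gives $v_t=\widetilde\tau\,V'(s)$ and $v_{x_ix_j}=V'(s)\,d_i\delta_{ij}+V''(s)\,d_ix_i\,d_jx_j$, so that, setting $w:=Dx\in\mathbb R^n$,
\begin{equation*}
D^2v(x,t)=V'(s)\,D+V''(s)\,w w'.
\end{equation*}
For the upper eigenvalue bound I would use that $V''(s)\leq 0$ makes the rank-one matrix $V''(s)\,ww'$ negative semidefinite; combined with $V'(s)>1>0$ and the fact that $D$ is diagonal with eigenvalues $d_1\leq\cdots\leq d_n$, the matrix inequality $D^2v\leq V'(s)\,D$ yields $\lambda_i(D^2v)\leq d_iV'(s)$.

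For the lower eigenvalue bound, the key observation is that the Cauchy--Schwarz inequality $(\xi'w)^2\leq |\xi|^2|w|^2$ reverses direction when multiplied by the nonpositive quantity $V''(s)$, so that
\begin{equation*}
V''(s)\,(\xi'w)^2\geq V''(s)\,|\xi|^2|w|^2,\qquad\forall\,\xi\in\mathbb R^n.
\end{equation*}
This gives the matrix inequality $V''(s)\,ww'\geq V''(s)\,|w|^2 I$, so $D^2v\geq V'(s)\,D+V''(s)\,|w|^2 I$, and the identity $|w|^2=\sum_j d_j^2 x_j^2$ delivers the stated lower bound on $\lambda_i(D^2v)$.

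Finally, for the inequality on $P[v]$, I would note that the eigenvalues of $D^2v-KI$ are obtained from those of $D^2v$ by subtracting $K$. Using $d_i=a_i+K$, the lower bound above becomes
\begin{equation*}
\lambda_i(D^2v-KI)\geq a_iV'(s)+\sum_{j=1}^n d_j^2 x_j^2\,V''(s)+K(V'(s)-1)\geq a_iV'(s)+\sum_{j=1}^n d_j^2 x_j^2\,V''(s),
\end{equation*}
where the last step uses $V'(s)>1$. Applying the monotonicity of $\arctan$, summing over $i$, and combining with $-v_t=-\widetilde\tau\,V'(s)$ gives the claimed estimate on $P[v]$. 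I do not foresee a genuine obstacle; the only mildly subtle point is the reversed Cauchy--Schwarz step, which is precisely what allows the rank-one perturbation $V''(s)\,ww'$ to be absorbed into an isotropic lower bound, and the use of $V'>1$ to discard the nonnegative remainder $K(V'-1)$ in the $\arctan$ summand.
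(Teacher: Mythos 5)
Your proof is correct and follows essentially the same route as the paper: you decompose $D^2v$ into the diagonal piece $V'(s)D$ plus the rank-one piece $V''(s)\,ww'$, and control the eigenvalues of $D^2v$ via the extremal eigenvalues $0$ and $V''(s)|w|^2$ of that rank-one perturbation (you phrase this through Cauchy--Schwarz and the Loewner order, the paper through an explicit eigenvalue computation of $\mathfrak{B}$, which are the same thing). The subsequent absorption of $-K$ into $-KV'$ using $V'>1$ and monotonicity of $\arctan$ is also identical to the paper's argument.
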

 \begin{proof}
    By a direct computation,  for all $(x,t)\in\mathbb R^{n+1}_-\setminus\{(0,0)\}$, 
    \[
    \begin{array}{lllll}
        -v_t(x,t) &=&-\widetilde\tau V'(s),\\
      D_iv(x,t) &=&d_ix_iV'(s),\\
      D_{ij}v(x,t) &=&d_i\delta_{ij}V'(s)+d_id_jx_ix_jV''(s),
    \end{array}
    \] 
  where $\delta_{ij}$ denotes the Kronecker delta symbol. Consequently, the regularity of $V$ implies $v\in C^{2,1}(\mathbb R^{n+1}_-\setminus\{(0,0)\})$. Then, as $(x,t)\rightarrow(0,0)$,
  \[
  v_t(x,t)\rightarrow \widetilde\tau V'(0),\quad D_iv(x,t)\rightarrow 0,\quad\text{and}\quad 
  D_{ij}v(x,t)\rightarrow d_i\delta_{ij}V'(0).
  \]
  By the mean value theorem, this finishes the proof that $v\in C^{2,1}(\mathbb R^{n+1}_-)$. 
    
   Rewrite the Hessian matrix $D^2v$ into
     \[
     D^2v(x,t) =\mathfrak{A}(x,t)+\mathfrak{B}(x,t),
     \]
     where 
     $\mathfrak{A}=\mathrm{diag}(d_1V'(s),\cdots,d_nV'(s))$ and 
     $\mathfrak{B}=(d_id_jx_ix_jV''(s))_{n\times n}$.
     By a direct computation, 
     \[
     \lambda(\mathfrak A)=(d_1V'(s),\cdots,d_nV'(s))\quad\text{and}\quad
     \lambda(\mathfrak B)=\left(
         \sum_{j=1}^nd_j^2x_j^2V''(s),0,\cdots,0
     \right).
     \]
     Since $V''\leq 0$ and $V'\geq 1$, estimate  \eqref{equ-Est-EigenVal-Subsol} follows immediately. (See also \cite{Bao-Liu-Wang-DiricLagrangianEntire,Li-Wang-EntireDirichlet,Li-Dirichlet-SPL}, etc.)
     Therefore, by the monotonicity of $\arctan$ function, it follows that 
     \[
     \begin{array}{llll}
         P[v](x,t)&\geq &\displaystyle -\widetilde\tau V'+\sum_{i=1}^n\arctan \left(d_iV'+\sum_{j=1}^nd_j^2x_j^2V''-K\right)\\
         &\geq &\displaystyle -\widetilde\tau V'+\sum_{i=1}^n\arctan \left(d_iV'+\sum_{j=1}^nd_j^2x_j^2V''-KV'\right)\\
         &= & \displaystyle -\widetilde\tau V'+\sum_{i=1}^n\arctan\left(a_iV'+\sum_{j=1}^nd_j^2x_j^2V''\right).
     \end{array}
     \]
     This finishes the proof of the desired estimate. 
 \end{proof}
 
 To proceed, we introduce the following two implicit functions. 
 \begin{lemma}\label{lem-construction-ImplicFunc-Hsw-F1}
     Let $\overline f$ be the monotone decreasing function as chosen in \eqref{equ-choice-fBarrier} and \eqref{equ-chioce-fBarrier-converSpeed}. Then there exists a unique decreasing positive function $\underline w(s)$ defined on $[0,+\infty)$ determined by 
     \[
     F_1(\underline w(s)):=-\widetilde\tau \underline w(s)+\sum_{i=1}^n\arctan (a_i\underline w(s))=\Xi+\overline f(s),\quad\forall~s\geq 0.
     \]
     Especially, $\underline w(s)>1$ and satisfies $\underline w(s)\rightarrow 1$ as $s\rightarrow\infty$.
 \end{lemma}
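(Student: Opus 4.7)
\textbf{Proof plan for Lemma \ref{lem-construction-ImplicFunc-Hsw-F1}.} The plan is to invert the scalar equation $F_1(w)=\Theta+\overline f(s)$ for each fixed $s\geq 0$. Once $F_1$ is shown to be a strictly increasing continuous bijection of an appropriate interval onto a range containing $\Theta+\overline f(s)$, the function $\underline w(s):=F_1^{-1}(\Theta+\overline f(s))$ is unambiguously defined, and its qualitative properties follow by inspection from those of $\overline f$.

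First, I would observe the baseline value $F_1(1)=-\widetilde\tau+\sum_{i=1}^n\arctan a_i=(\Theta-\tau)+\tau=\Theta$, so $w=1$ corresponds exactly to the ``$\overline f\equiv 0$'' case. Next, I would compute
\[
F_1'(w)=-\widetilde\tau+\sum_{i=1}^n\frac{a_i}{1+a_i^2w^2},\qquad w\geq 0,
\]
and bound it below by $-\widetilde\tau+\sum_{i=1}^n\min\{a_i,0\}=\Theta-\tau+\sum_{i=1}^n\min\{a_i,0\}$, using that the term $a_i/(1+a_i^2w^2)$ shares the sign of $a_i$ and, when $a_i<0$, is at least $a_i$. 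By condition \eqref{equ-choice-Theta-1} this lower bound is strictly positive, so $F_1$ is strictly increasing on $[0,\infty)$. Since additionally $-\widetilde\tau>0$, the term $-\widetilde\tau w$ dominates at infinity and $F_1(w)\to\infty$ as $w\to\infty$; coupled with $F_1(1)=\Theta$, this means $F_1$ maps $[1,\infty)$ continuously and bijectively onto $[\Theta,\infty)$.

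Now, because $\overline f(s)>0$ on $[0,\infty)$, the right-hand side $\Theta+\overline f(s)$ lies in $(\Theta,\infty)$ for every $s\geq 0$, so the inverse $\underline w(s):=F_1^{-1}(\Theta+\overline f(s))$ exists, is unique, and satisfies $\underline w(s)>1$. The function $s\mapsto\Theta+\overline f(s)$ is (strictly) decreasing by \eqref{equ-choice-fBarrier}, and $F_1^{-1}$ is (strictly) increasing, so $\underline w(s)$ is decreasing in $s$. Finally, the decay $\overline f(s)=O(s^{-\beta/2})$ from \eqref{equ-chioce-fBarrier-converSpeed} gives $\Theta+\overline f(s)\to\Theta=F_1(1)$ as $s\to\infty$, whence $\underline w(s)\to 1$ by continuity of $F_1^{-1}$.

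The only genuinely delicate step is the monotonicity of $F_1$: because $A$ is not assumed positive (only $A+KI>0$), some eigenvalues $a_i$ may be negative, and the $\arctan$ terms are not individually monotone in the desired sense. The estimate $F_1'(w)\geq \Theta-\tau+\sum_{i=1}^n\min\{a_i,0\}$ together with the calibration \eqref{equ-choice-Theta-1} of $\Theta$ is exactly what rescues strict monotonicity; this is where the specific choice of $\Theta$ made in the previous subsection is essential, and I expect no other obstacle beyond bookkeeping.
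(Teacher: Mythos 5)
Your proposal is correct and follows essentially the same path as the paper: both establish strict monotonicity of $F_1$ via the bound $F_1'(w)\geq \Theta-\tau+\sum_{i=1}^n\min\{a_i,0\}>0$ (using \eqref{equ-choice-Theta-1}), observe $F_1(1)=\Theta$ and $F_1(w)\to\infty$, and invert. The only cosmetic difference is that you obtain the monotonicity of $\underline w$ by composing the increasing $F_1^{-1}$ with the decreasing $\overline f$, whereas the paper differentiates the identity $F_1(\underline w(s))=\Theta+\overline f(s)$ implicitly; these are equivalent.
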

 \begin{proof}
     From the choice of $\Xi$ as in \eqref{equ-choice-Theta} and \eqref{equ-choice-Theta-1}, we have 
     \[
     F_1'(\underline w)=-\widetilde\tau +\sum_{i=1}^n\dfrac{a_i}{1+a_i^2\underline w^2}\geq 
     \Xi-\tau +\sum_{i=1}^n\min\{a_i,0\}>
     0,
     \]
     for all $\underline w\geq 1.$ Furthermore, 
     \[
     F_1(1)=-\widetilde\tau +\sum_{i=1}^n\arctan a_i=\Xi< \Xi+\overline f(s),\quad\forall~s\geq 0,
     \]
     and $F_1(\underline w)\rightarrow +\infty$ as $\underline w\rightarrow\infty$. 
     Hence, by the mean value theorem, there exists a unique $\underline w(s)>1$ such that $F_1(\underline w(s))=\Xi+\overline f(s)$.  Sending $s\rightarrow\infty$, it follows immediately that $\underline w(s)\rightarrow 1$ at infinity. By taking partial derivative with respect to $s$, 
     \[
     -\widetilde\tau \underline w'(s)+\sum_{i=1}^n\dfrac{a_i}{1+(a_i\underline w(s))^2}\underline w'(s)=\overline f'(s)<0.
     \]
     Thus, $\underline w(s)$ is a monotone decreasing function. 
 \end{proof}
 
 Furthermore, there exists a unique decreasing positive function $w^*(s)$ defined on $[0,+\infty)$ determined by 
 \[
 -\widetilde\tau w^*(s)-\frac{n}{2}\pi=\Xi+\overline f(s),\quad\forall~s\geq 0.
 \]
 Especially, from the choice of $\underline w(s)$ in Lemma \ref{lem-construction-ImplicFunc-Hsw-F1}, $w^*(s)>\underline w(s)$, and 
 \[
 \lim_{s\rightarrow\infty}w^*(s)=\frac{\Xi+\frac{n}{2}\pi}{-\widetilde\tau }=\frac{\Xi+\frac{n}{2}\pi}{\Xi-\tau}>1,
 \]
 since $\tau=\sum_{i=1}^n\arctan\lambda_i(A)\in \left(-\frac{n}{2}\pi,\frac{n}{2}\pi\right)$. Thus, there exist $S, \delta>0$ such that 
 \[
 w^*(s)\geq \frac{\underline w(\infty)+w^*(\infty)}{2}+\delta,
 \quad \text{and}\quad 
 \frac{\underline w(\infty)+w^*(\infty)}{2}\geq \underline w(s),\quad\forall~s\geq S. 
 \]

 \begin{lemma}\label{lem-construction-ImplicFunc-Hsw}
     There exists a   smooth function $h(s,w)\leq 0$  satisfying 
     \[
     F_2(w,h):=-\widetilde\tau w+\sum_{i=1}^n\arctan\left(a_iw+ h\right)=\Xi+\overline f(s),\quad\forall~ w\in[\underline w(s),w^*(s)),\quad s\geq 0.
     \]
      Furthermore, $h(s,w)$ is monotone decreasing with respect to $w$ and to $s$,
     $h(s,\underline w(s))\equiv 0,  
     h(\infty,1)=0$,
     \[
      \dfrac{\partial h}{\partial w}(\infty,1)=-M(A,\Xi),
     \]
     and 
     \begin{equation}\label{equ-AsymBehav-hsw-Subsol}
     |h(s,w)-h(\infty,w)|\leq Cs^{-\frac{\beta}{2}},\quad\forall~w\in\left[\underline w(s),\frac{\underline w(\infty)+w^*(\infty)}{2}\right],
     \end{equation}
     for sufficiently large $s$ and $C>0$.
 \end{lemma}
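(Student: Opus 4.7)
The plan is to obtain $h(s,w)$ by an implicit function argument applied to $F_2(w,\cdot)$ with parameter $s$, then read off the monotonicity, boundary values, convergence rate, and derivative at infinity directly from the defining equation.

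First I would fix $s\geq 0$ and $w\in[\underline w(s),w^*(s))$ and study the map $h\mapsto F_2(w,h)$ on $(-\infty,0]$. Its derivative $\partial_h F_2 = \sum_{i=1}^n(1+(a_iw+h)^2)^{-1}$ is strictly positive, so $F_2(w,\cdot)$ is strictly increasing. At $h=0$ it equals $F_1(w)\geq F_1(\underline w(s))=\Theta+\overline f(s)$, by monotonicity of $F_1$ established in Lemma \ref{lem-construction-ImplicFunc-Hsw-F1}. As $h\to-\infty$ it tends to $-\widetilde\tau w-\frac{n}{2}\pi$, which is strictly less than $\Theta+\overline f(s)$ precisely when $w<w^*(s)$, by the very definition of $w^*(s)$. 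The intermediate value theorem therefore yields a unique $h(s,w)\in(-\infty,0]$ solving $F_2(w,h)=\Theta+\overline f(s)$, with $h(s,\underline w(s))=0$ coming for free from $F_1(\underline w(s))=\Theta+\overline f(s)$. Smoothness of $h(s,w)$ on the open region $w\in(\underline w(s),w^*(s))$ follows from the implicit function theorem because $\partial_h F_2\neq 0$ and $\overline f$ is smooth.

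For the monotonicity, I differentiate $F_2(w,h(s,w))=\Theta+\overline f(s)$ in $w$ and in $s$ respectively. Using $\partial_w F_2=-\widetilde\tau+\sum a_i(1+(a_iw+h)^2)^{-1}$, which by the same argument that gave $F_1'>0$ in Lemma \ref{lem-construction-ImplicFunc-Hsw-F1} (via the choice \eqref{equ-choice-Theta-1} of $\Theta$) is strictly positive, I conclude $\partial_w h<0$. Similarly $\partial_h F_2\cdot\partial_s h=\overline f'(s)<0$ gives $\partial_s h<0$. The value $h(\infty,1)$ comes from sending $s\to\infty$ in the defining relation: $\overline f(s)\to 0$ and the equation becomes $-\widetilde\tau+\sum\arctan(a_i+h(\infty,1))=\Theta$, whose unique solution is $h=0$ since $-\widetilde\tau+\sum\arctan a_i=\Theta$ by definition of $\widetilde\tau$.

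For the convergence rate \eqref{equ-AsymBehav-hsw-Subsol}, I would subtract the equations at $s$ and at $\infty$ (keeping $w$ fixed, where defined) to obtain
\[
\sum_{i=1}^n\bigl[\arctan(a_iw+h(s,w))-\arctan(a_iw+h(\infty,w))\bigr]=\overline f(s).
\]
Applying the mean value theorem on the left gives a factor $\sum_i(1+\xi_i^2)^{-1}$ with $\xi_i$ between the two arguments; this factor stays bounded below away from zero on compact ranges of $w$ (near $1$), so $|h(s,w)-h(\infty,w)|\leq C|\overline f(s)|\leq Cs^{-\beta/2}$ by \eqref{equ-chioce-fBarrier-converSpeed}. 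Finally, the value of $\partial_w h(\infty,1)$ is obtained by implicit differentiation of $-\widetilde\tau w+\sum\arctan(a_iw+h)=\Theta$ at $(w,h)=(1,0)$:
\[
-\widetilde\tau+\sum_{i=1}^n\frac{a_i}{1+a_i^2}+\partial_w h(\infty,1)\sum_{i=1}^n\frac{1}{1+a_i^2}=0,
\]
and substituting $-\widetilde\tau=\Theta-\tau$ and comparing with \eqref{equ-choice-Theta} yields $\partial_w h(\infty,1)=-M(A,\Theta)$.

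The main technical obstacle I anticipate is justifying the convergence rate uniformly in $w$ on a neighborhood of the diagonal $w=\underline w(s)\to 1$: one must ensure that the mean value factor $\sum_i(1+\xi_i^2)^{-1}$ is uniformly bounded below, which requires a priori bounds on $h(s,w)$ and $h(\infty,w)$ for $w$ near $1$. This should follow from the monotonicity $h(s,w)\leq h(s,\underline w(s))=0$ together with a lower bound $h(s,w)\geq h(s,w_0)$ for some $w_0$ slightly less than $w^*(\infty)$, but the details require some care.
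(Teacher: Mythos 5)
Your proposal is correct and follows essentially the same route as the paper: the intermediate value theorem plus monotonicity of $F_2$ in $h$ for existence and uniqueness, the implicit function theorem for smoothness, implicit differentiation for the two monotonicities and for $\partial_w h(\infty,1)=-M(A,\Theta)$, a limit computation for $h(\infty,1)=0$, and the mean value theorem applied to the difference of defining equations for the $O(s^{-\beta/2})$ rate. The technical concern you flag at the end is worth noting: the paper justifies the rate by asserting that ``$\underline w(s),\ w^*(s)$ and $h(s,w)$ are bounded,'' but $h(s,w)\to-\infty$ as $w\to w^*(s)^-$, so the mean-value factor $\sum_i(1+\xi_i^2)^{-1}$ cannot be bounded below uniformly over the whole interval $[\underline w(s),w^*(s))$. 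The estimate \eqref{equ-AsymBehav-hsw-Subsol} should therefore be read as holding uniformly on compact $w$-sets bounded away from $w^*$, which suffices for the only place it is used (Lemma \ref{lem-ODEInitial-Subsol-ConsRHS}, where $w=W(s)\to 1$ and stays inside $(\underline w(s),w^*(s))$). Your version is also cleaner in one small respect: you correctly conclude $h(\infty,1)=0$, whereas the paper's proof contains a typo reading ``$h(s,w)\rightarrow 1$.''
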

 \begin{proof}
     Notice that for all $w\in[\underline w(s),w^*(s))$, the monotonicity of $\arctan$ function implies that  
     \[
     \lim_{h\rightarrow 0}F_2(w,h)=F_2(w,0)=-\widetilde\tau w+\sum_{i=1}^n\arctan(a_iw)\geq \Xi+\overline f(s),\quad\forall~s\geq 0,
     \]
     and 
     \[
     \lim_{h\rightarrow-\infty}F_2(w,h)=-\widetilde\tau w-\frac{n}{2}\pi<\Xi+\overline f(s),\quad\forall~s\geq 0.
     \]
     Since $F_2$ is monotone increasing with respect to $h$, the mean value theorem proves that there exists a unique implicit function $h(s,w)$ such that 
     \[
     F_2(w,h(s,w))=\Xi+\overline f(s),\quad\forall~w\in[\underline w(s),w^*(s)),~s\geq 0. 
     \]
     Especially, $-\infty<h(s,w)\leq 0$ and is bounded when $w$ is away from $w^*(s)$. By the implicit function theorem, $h$ is a smooth function that satisfies 
     \[
     \dfrac{\partial F_2}{\partial h}(w,h)\cdot \dfrac{\partial h}{\partial w}(s,w)+\dfrac{\partial F_2}{\partial w}=0,\quad\text{and}\quad    \dfrac{\partial F_2}{\partial h}(w,h)\cdot \dfrac{\partial h}{\partial s}(s,w)=\overline f'(s)<0.
     \]
     By a direct computation, together with the choice of $\Xi$ as in \eqref{equ-choice-Theta} and \eqref{equ-choice-Theta-1},
     the  inequalities above imply that $h(s,w)$ is monotone decreasing with respect to $w$ and  $s$. The first inequality proves
     \[
     \sum_{i=1}^n\dfrac{1}{1+(a_iw+h)^2} \cdot \dfrac{\partial h}{\partial w}(s,w)-\widetilde\tau +\sum_{i=1}^n\dfrac{a_i}{1+(a_iw+ h)^2} =0.
     \] 
     Consequently, sending $(s,w)\rightarrow (\infty,1)$, we have $h(s,w)\rightarrow 1$, and 
     \[
     \dfrac{\partial h}{\partial w}(\infty,1)=\left(\widetilde\tau -\sum_{i=1}^n\dfrac{a_i}{1+a_i^2}\right)\cdot \left(\sum_{i=1}^n\dfrac{1}{1+a_i^2}\right)^{-1}=-M(A,\Xi).
     \]
     Eventually, for any $w\in\left[\underline w(s),\frac{\underline w(\infty)+w^*(\infty)}{2}\right]$ and sufficiently large $s$, $\underline w(s), w^*(s)$ and $h(s,w)$ are bounded functions. Then, there exists $C>0$ such that 
     \[
     \begin{array}{llllll}
         \overline f(s)-0&=& F_2(w,h(s,w))-F_2(w,h(\infty,w))\\
         &=&\displaystyle \sum_{i=1}^n\left(\arctan(a_iw+h(s,w))-\arctan(a_iw+h(\infty,w))\right)\\
         &\geq & C(h(s,w)-h(\infty,w)).
     \end{array}
     \]
     Hence, \eqref{equ-AsymBehav-hsw-Subsol} follows from the asymptotic behavior of $\overline f$. 
     This finishes the proof of this lemma. 
 \end{proof}

 \begin{corollary}\label{Coro-SubsolODE} Let  $v$ be as in \eqref{equ-def-GeneraSym}. If $V\in C^2([0,+\infty))$,
     \[
     V'\geq 1,\quad V''\leq 0\quad\text{and}\quad 
     2d_n(s+1)V''\geq h(s,V')\quad\text{in }s\geq 0.
     \]
     Then $v$ is a subsolution to $P[v]=\Xi-f(x,t)$ in $\mathbb R^{n+1}_-$.
 \end{corollary}
 \begin{proof}
     From the definition of generalized symmetric function as in \eqref{equ-def-GeneraSym}, by Lemmas \ref{lem-Subsol-EigenValEst} and \ref{lem-construction-ImplicFunc-Hsw}, 
     \[
     \begin{array}{llllll}
         P[v]&\geq &\displaystyle  -\widetilde\tau V'+\sum_{i=1}^n\arctan\left(a_iV'+\sum_{i=1}^nd_j^2x_j^2V''\right)\\
         &\geq & \displaystyle -\widetilde\tau V'+\sum_{i=1}^n\arctan\left(
             a_iV'+2d_n(s+1)V''
         \right)\\
         &\geq & F_2(V',h(s,V'))\\
         &= & \Xi+\overline f(s),
     \end{array}
     \]
     in $\mathbb R^{n+1}_-$. This finishes the proof of this lemma. 
 \end{proof}
 
 To proceed, we prove the following existence and uniqueness result on an initial value problem of ordinary differential equation, along with the monotonicity and asymptotic behavior at infinity. This strategy has now become  standard in the analysis of Dirichlet problems on exterior domains and entire spaces. See for instance  \cite{Bao-Liu-Wang-DiricLagrangianEntire,Bao-Liu-Zhou-AncientSol-NewAsym-JDE}.

 \begin{lemma}\label{lem-ODEInitial-Subsol-ConsRHS}
     For any   $w_0\in (\underline w(0),w^*(0))$, there exists a unique solution $W\in C^1([0,+\infty))$ to the initial value problem 
     \begin{equation}\label{equ-ODEInitial-Subsol-ConsRHS}
         \left\{
             \begin{array}{llll}
                 \dfrac{\mathrm dw}{\mathrm ds}=\dfrac{h(s,w)}{2d_n(s+1)}, & \text{in }s\geq 0,\\
                 w(0)=w_0.
             \end{array}
         \right.
     \end{equation}
     Furthermore, $W'(s)<0$, $\underline w(s)<W(s)<w^*(s)$, and 
     \begin{equation}\label{equ-Asym-InitSols}
         W(s)=1+O(s^{-\frac{\beta}{2}}),\quad\text{as }s\rightarrow\infty.
     \end{equation}
 \end{lemma}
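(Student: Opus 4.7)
The plan is to prove the result in three stages: establish local existence and strict trapping inside the strip $(\underline w(s), w^*(s))$, extend globally by continuation, and then extract the algebraic decay rate by linearising the ODE near $(\infty,1)$.

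\textbf{Local existence, monotonicity, and trapping.}
Since the function $h$ produced in Lemma~\ref{lem-construction-ImplicFunc-Hsw} is smooth on the open strip $\{(s,w):s\ge 0,\ \underline w(s)<w<w^*(s)\}$ and $2d_n(s+1)>0$, the right-hand side of \eqref{equ-ODEInitial-Subsol-ConsRHS} is locally Lipschitz in $w$ there, so Picard--Lindel\"of yields a unique $C^1$ solution $W$ on some maximal interval $[0,s_{\max})$ remaining inside the strip. Because $h(s,\cdot)$ is strictly decreasing with $h(s,\underline w(s))=0$, one has $h(s,W(s))<0$ and hence $W'(s)<0$ as long as $W(s)>\underline w(s)$. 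For the lower barrier, if $s_2>0$ were a first instant with $W(s_2)=\underline w(s_2)$, then $W'(s_2)=0$ while $\underline w'(s_2)<0$, so $(W-\underline w)'(s_2)>0$, contradicting $W>\underline w$ on $[0,s_2)$. For the upper barrier, set $\delta(s):=w^*(s)-W(s)$ and observe that $F_2(w,h)\to -\widetilde\tau w-n\pi/2$ as $h\to-\infty$, so $h(s,w)\to-\infty$ as $w\to w^*(s)^-$; should $\delta(s)\to 0$ as $s\to s_1^-$ at a first crossing time $s_1$, then $W'(s)\to-\infty$ while $(w^*)'(s)$ remains bounded, forcing $\delta'(s)\to+\infty$ and hence $\int_s^{s_1}\delta'\,d\tau\to+\infty$, which is incompatible with $\delta(s)\to 0$. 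Thus $\underline w(s)<W(s)<w^*(s)$ throughout, and standard continuation gives $s_{\max}=+\infty$.

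\textbf{Limit at infinity and convergence rate.}
As $W$ is decreasing and bounded below by $\underline w(s)>1$, the limit $W_\infty:=\lim_{s\to\infty}W(s)\ge 1$ exists. If $W_\infty>1$, then for $s$ large $h(s,W(s))\to h(\infty,W_\infty)$, which is strictly negative because $h(\infty,\cdot)$ is strictly decreasing with $h(\infty,1)=0$; this gives $W'(s)\le -c/(s+1)$ for some $c>0$ and forces $W(s)\to-\infty$, impossible. Hence $W_\infty=1$. For the rate I would Taylor-expand using $h(\infty,1)=0$, $\partial_w h(\infty,1)=-M(A,\Theta)$, and the bound \eqref{equ-AsymBehav-hsw-Subsol} to obtain
\[
h(s,1+u) \;=\; -M(A,\Theta)\,u \;+\; O(u^2) \;+\; O(s^{-\beta/2}).
\]
Setting $u(s):=W(s)-1$, the ODE becomes the perturbed linear equation
\[
u'(s) + \frac{M(A,\Theta)}{2d_n(s+1)}\,u(s) \;=\; O\!\Big(\tfrac{u^2}{s+1}\Big) + O(s^{-\beta/2-1}),
\]
which I would integrate using the integrating factor $(s+1)^{M(A,\Theta)/(2d_n)}$. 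Combined with the matching lower bound $u(s)\ge \underline w(s)-1=O(s^{-\beta/2})$ (via a mean-value expansion of the defining relation $F_1(\underline w(s))=\Theta+\overline f(s)$ in Lemma~\ref{lem-construction-ImplicFunc-Hsw-F1} together with \eqref{equ-chioce-fBarrier-converSpeed}), this is expected to yield $u(s)=O(s^{-\beta/2})$.

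\textbf{Main obstacle.}
The delicate step will be producing the sharp algebraic rate $s^{-\beta/2}$. The linearised equation naturally produces the weight $(s+1)^{M(A,\Theta)/(2d_n)}$, and the nonlinear remainder $O(u^2/(s+1))$ must be absorbed by a bootstrap starting from the crude bound $u(s)=o(1)$ already established. The parameter choice $M(A,\Theta)>\beta/2>1$ from \eqref{equ-choice-Theta} together with the matching lower barrier supplied by $\underline w$ are precisely what is needed to make the integrating-factor argument close at the prescribed rate.
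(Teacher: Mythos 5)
Your argument matches the paper's approach at every stage: Picard--Lindel\"of for local existence, touching-point arguments at the barriers $\underline w$ and $w^*$ (using $h(s,\underline w(s))=0$ and $h(s,w)\to-\infty$ as $w\to w^*(s)^-$) for trapping and global continuation, monotonicity of $W$, the exclusion of $W_\infty>1$ via non-integrability of $-c/(s+1)$, and finally linearization near $(\infty,1)$ closed by an integrating factor. The only cosmetic difference is that the paper first changes variables to $t=\ln(s+1)$, under which your integrating factor $(s+1)^{M(A,\Theta)/(2d_n)}$ becomes $e^{(M(A,\Theta)/(2d_n))t}$, and then invokes an asymptotic stability theorem — equivalent to the direct integration you propose.

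One small inconsistency you should tidy up when you flesh this out: because the denominator in \eqref{equ-ODEInitial-Subsol-ConsRHS} is $2d_n(s+1)$, the linear coefficient you correctly extract is $M(A,\Theta)/(2d_n)$, not $M(A,\Theta)$; so closing the rate $u=O(s^{-\beta/2})$ via the integrating factor actually needs $M(A,\Theta)/(2d_n)>\beta/2$, whereas your final paragraph cites only $M(A,\Theta)>\beta/2$ from \eqref{equ-choice-Theta}. (The paper's own computation of $h_2'(0)$ silently drops the $1/(2d_n)$ factor and has the same mismatch; either way, it is harmless because $\Theta$ can always be chosen large enough that $M(A,\Theta)/(2d_n)>\beta/2$, but the hypothesis as stated should be strengthened accordingly.)
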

 
 \begin{proof} \textbf{Step 1. }Local existence near $s=0$. 
     Since $w_0\in (\underline w(0),w^*(0))$,  there exists $s_0>0$ such that 
     \[
     [0,s_0]\times\left[\dfrac{w_0+\underline w(0)}{2},\dfrac{3w_0-\underline w(0)}{2}\right]\subset\{(s,w)~:~s\geq 0,~w\in(\underline w(s),w^*(s))\}.
     \]
     Thus, the right-hand side term $\frac{h(s,w)}{2d_n(s+1)}$ is Lipschitz in the rectangle $[0,s_0]\times\left[\frac{w_0+\underline w(0)}{2},\frac{3w_0-\underline w(0)}{2}\right]$. By the Picard-Lindel\"of theorem, the initial value problem \eqref{equ-ODEInitial-Subsol-ConsRHS} admits a unique solution $W$ near $s=0$, and we shall prove that the solution can be extended to $s\in[0,+\infty)$. 
     
     \textbf{Step 2. }Upper and lower bound estimate.
     By Lemma \ref{lem-construction-ImplicFunc-Hsw} and the choice of $h(s,w)$, the equation in \eqref{equ-ODEInitial-Subsol-ConsRHS} implies that $W$ is monotone decreasing as long as $W>\underline w$. Next, since $W(0)=w_0\in(\underline w(0),w^*(0))$, we claim that $W$ cannot touch $\underline w(s)$ from above. Arguing by contradiction, we suppose there exists $s_1>0$ such that 
     \[
     W(s_1)=\underline w(s_1)\quad\text{and}\quad W(s)>\underline w(s)\quad\text{in }[0,s_1).
     \]
     Then, from the definition of derivative and the equation in \eqref{equ-ODEInitial-Subsol-ConsRHS}, 
     \[
     W'(s_1)=\lim_{s\rightarrow s_1^-}\dfrac{W(s)-W(s_1)}{s-s_1}\leq \underline w'(s_1)\quad\text{but}\quad W'(s_1)=\dfrac{h(s_1,W(s_1))}{2d_n(s_1+1)}=0,
     \]
     where we used the   the fact that $h(s_1,\underline w(s_1))=0$ from Lemma \ref{lem-construction-ImplicFunc-Hsw}. This becomes a contradiction since $\underline w$ is a monotone decreasing function. 
     
     Similarly, we claim that $W$ cannot touch $w^*(s)$ from below. Arguing by contradiction, we suppose there exists $s_2>0$ such that 
     \[
     W(s_2)=w^*(s_2)\quad\text{and}\quad W(s)<w^*(s)\quad\text{in }[0,s_2).
     \]
     Then, from the definition of derivative and the equation in \eqref{equ-ODEInitial-Subsol-ConsRHS} implies 
     \[
     W'(s_2)=\lim_{s\rightarrow s_2^-}\dfrac{W(s)-W(s_2)}{s-s_2}\geq (w^*)'(s_2)\quad\text{but}\quad \lim_{s\rightarrow s_2^-}W'(s)=
     \lim_{s\rightarrow s_2^-}\dfrac{h(s,W(s))}{2d_n(s+1)}=-\infty,
    \]
    where we used the fact that $h(s,w^*(s))=\infty$ from the proof of Lemma \ref{lem-construction-ImplicFunc-Hsw}. This becomes a contradiction since $w^*$ is a smooth function.
     Combining the results above, $W(s)\in(\underline w(s),w^*(s))$ and is a  decreasing function. By the Carath\'eodory extension theorem of ordinary differential equations, the solution $W$ exists on $[0,+\infty)$.
     
     \textbf{Step 3. }Asymptotic behavior of $W$ at infinity. Since $W$ is monotone decreasing and bounded by $1\leq \underline w(s)< w^*(s)$, $W$ admits a limit $W(\infty)\in[1,w^*(\infty)]$ at infinity. We claim that $W(\infty)=1$. Arguing by contradiction, we suppose $W(\infty)>1$. From the results in Lemmas \ref{lem-construction-ImplicFunc-Hsw-F1} and \ref{lem-construction-ImplicFunc-Hsw}, $W(\infty)-\underline w(\infty)>0$ and hence there exists $c>0$ such that 
     \[
     W(s)-\underline w(s)\geq c,\quad\forall~s\geq 1.
     \]
     Since $h(s,\underline w(s))\equiv0$ and $h(s,w)$ is monotone decreasing with respect to $w$ and $s$, we have 
     \[
     \dfrac{\mathrm d W(s)}{\mathrm ds}=\dfrac{h(s,W(s))}{2d_n(s+1)}= \dfrac{h(s,W(s))-h(s,\underline w(s))}{4d_ns}<-\dfrac{\epsilon}{s},\quad\forall~s\gg 1,
     \]
     where $\epsilon$ is a positive constant relying on $c$ and $h(s,w)$. This contradicts the fact that $W$ converge to $W(\infty)$ at infinity, since $-\frac{\epsilon}{s}$ is not integrable on $[1,\infty)$. To proceed, we refine the asymptotic behavior by setting 
     \[
     t:=\ln(s+1)\in (0,+\infty)\quad\text{and}\quad \varphi(t):=W(s(t))-1.
     \]
     By a direct computation, for all $t\in (0,+\infty)$, 
     \[
     \begin{array}{llll}
       \varphi'(t)&=& W'(s(t))\cdot e^t\\
       &=& \dfrac{h(s(t),\varphi+1)}{2d_n}\\
       &=& \dfrac{h(s(t),\varphi+1)-h(\infty,\varphi+1)}{2d_n}
       +\dfrac{h(\infty,\varphi+1)}{2d_n}\\
       &=:& h_1(t,\varphi)+h_2(\varphi).
     \end{array}
     \]
     By the asymptotic behavior of $h(s,w)$ given in Lemma \ref{lem-construction-ImplicFunc-Hsw}, there exists $C>0$ such that for all $t\gg 1$ and $0< \varphi\ll1$, 
     \[
     |h_1(t,\varphi)|\leq Ce^{-\frac{\beta}{2}t},\quad h_2'(0)=\dfrac{\partial h}{\partial w}(\infty,1)=-M(A,\Xi)\quad\text{and}\quad\left|
     h_2(\varphi)-h_2'(0)\cdot\varphi
     \right|\leq C\varphi^2.
     \]
     Consequently, $\varphi$ satisfies 
     \begin{equation}\label{equ-temp-ODE-varphi-subsol}
     \varphi'(t)=-M(A,\Xi)\varphi+O(e^{-\frac{\beta}{2}t})+O(\varphi^2),
     \end{equation}
     as $t\rightarrow\infty$ and $\varphi\rightarrow 0$. By the asymptotic stability result of ordinary differential equations (see for instance Theorem 1.1 in \cite[Chap. 13]{Book-Coddington-Levinson-ODE} or Theorem 2.16 in \cite{Book-Bodine-Lutz-AsymptoticIntegration}), together with the fact that $M(A,\Xi)>\frac{\beta}{2}$, we have 
     \begin{equation}\label{equ-desiredEst-Subsol-varphi}
     \varphi=O(e^{-\frac{\beta}{2}t})\quad\text{as }t\rightarrow\infty.
     \end{equation}
     More explicitly, since $\varphi>0$,  choose sufficiently small $0<\epsilon< M(A,\Xi)-\frac{\beta}{2} $. Then,  $\varphi$ satisfies 
     \[
     \varphi'(t)\leq -(M(A,\Xi)-\epsilon)\varphi(t)+Ce^{-\frac{\beta}{2}t},\quad\forall~t>T_0,
     \]
     for some constants $C, T_0$ sufficiently large. Multiplying both sides by $e^{(M(A,\Xi)-\epsilon)t}$ and integrating over $(T_0,t)$, there exist constants $C>0$ such that 
     \[
     \varphi\leq Ce^{-(M(A,\Xi)-\epsilon)t}+Ce^{-\frac{\beta}{2}t} \leq  Ce^{-\frac{\beta}{2}t},\quad\forall~t\geq T_0.
     \]
     This finishes the proof of  estimate \eqref{equ-desiredEst-Subsol-varphi}, which leads to the desired asymptotic behavior of $W$. 
 \end{proof}

 For any $w_0\in(\underline w(0),w^*(0))$, let 
 \[
 \underline v(x,t):=V_-(s(x,t)):=\int_{0}^{s(x,t)}W(\tau)\mathrm d\tau+C,\quad\forall~(x,t)\in\mathbb R^{n+1}_-,
 \]
 where $W(\tau)$ is the solution to  problem \eqref{equ-ODEInitial-Subsol-ConsRHS} with  initial value $W(0)=w_0$, constructed in Lemma \ref{lem-ODEInitial-Subsol-ConsRHS} and $C$ is a constant to be determined.  
 By Lemmas \ref{lem-Subsol-EigenValEst}, \ref{lem-ODEInitial-Subsol-ConsRHS}, and Corollary \ref{Coro-SubsolODE}, $\underline v\in C^2(\mathbb R^{n+1}_-)$ satisfies 
 \[
 P[\underline v](x,t)\geq \Xi-f(x,t)\quad\text{in }\mathbb R^{n+1}_-.
 \]
 It remains to reveal the asymptotic behavior of $\underline v$ at infinity.

 \begin{lemma}\label{lem-ConsTru-Subsol-Exterio-Trans}
     There exists a constant  $C$ such that the function $\underline v(x,t)$ defined above satisfies
     \[
     \left\{
         \begin{array}{lllll}
             P[\underline v]\geq \Xi-f(x,t), & \text{in }\mathbb R^{n+1}_-,\\ 
             \displaystyle \underline v(x,t)=
             \widetilde\tau t+\frac{1}{2}x'Dx+O(s^{1-\frac{\beta}{2}}(x,t)), & \text{as }-t+|x|^2\rightarrow\infty.
         \end{array}
     \right.
     \]
 \end{lemma}
 \begin{proof}
     From the definition of $\underline v$ and the asymptotic behavior \eqref{equ-Asym-InitSols} satisfied by $W(s)$ given in Lemma \ref{lem-ODEInitial-Subsol-ConsRHS}, we have 
     \[
        \lim_{-t+|x|^2\rightarrow\infty}(\underline v(x,t)-s(x,t))
        =\lim_{s\rightarrow\infty}
        \int_0^s\left(W(\tau)-1\right)\mathrm d \tau+C.
     \]
     By  \eqref{equ-Asym-InitSols}, we may take 
     \[
     C:=-\int_0^\infty\left(W(s)-1\right)\mathrm d s.
     \]
     Therefore, 
     \[
     \underline v(x,t)-s(x,t)=O(s^{1-\frac{\beta}{2}}(x,t))\quad\text{as }-t+|x|^2\rightarrow\infty.
     \]
     This finishes the construction of ancient subsolution on entire space $\mathbb R^{n+1}_-$.
 \end{proof}

 \subsection{Construction of supersolutions}\label{seclabel-Subsec-ConstrucSupsol}

 \begin{lemma}\label{lem-Supsol-EigenValEst}
    Let $V\in C^2([0,+\infty))$, $0<V'\leq 1, V''\geq 0$ and $v$ be as in \eqref{equ-def-GeneraSym}.
   Then $v\in C^{2,1}(\mathbb R^{n+1}_-)$ is   generalized symmetric with respect to $s$, and  satisfies
   \[
   d_iV'(s)\leq\lambda_i(D^2v(x,t))\leq d_iV'(s)+\sum_{j=1}^nd_j^2x_j^2V''(s),\quad\forall~1\leq i\leq n.
   \]
   Furthermore,  
   \[
   P[v](x,t)\leq -\widetilde\tau V'+\sum_{i=1}^n\arctan\left(a_iV'+\sum_{j=1}^nd_j^2x_j^2V''\right),\quad\forall~(x,t)\in\mathbb R^{n+1}_-.
   \]
 \end{lemma}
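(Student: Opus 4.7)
The argument is the supersolution mirror of Lemma~\ref{lem-Subsol-EigenValEst}, and I would follow the same three-step template.

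First, the regularity claim and the computation of the partial derivatives of $v = V\circ s$ proceed verbatim as in Lemma~\ref{lem-Subsol-EigenValEst}: direct differentiation away from the origin gives
\[
-v_t = -\widetilde\tau V'(s), \quad D_i v = d_i x_i V'(s), \quad D_{ij}v = d_i\delta_{ij}V'(s) + d_id_jx_ix_j V''(s),
\]
and the mean-value argument at the origin used there establishes $v\in C^{2,1}_{loc}(\mathbb R^{n+1}_-)$ without change, since the argument depended only on the smoothness of $V$ and the structure of $s(x,t)$, not on the sign or size of $V'$ and $V''$.

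Second, I would decompose the Hessian as $D^2 v = \mathfrak A + \mathfrak B$, where $\mathfrak A = V'(s)D$ is diagonal with eigenvalues $d_iV'(s) > 0$, and $\mathfrak B = V''(s)(Dx)(Dx)^\top$ is a rank-one symmetric matrix whose unique nonzero eigenvalue equals $V''(s)\sum_{j=1}^n d_j^2 x_j^2$. Applying Weyl's inequality with attention to the sign of this rank-one perturbation, one reads off the stated bound
\[
d_iV'(s)\leq \lambda_i(D^2v(x,t))\leq d_iV'(s)+\sum_{j=1}^nd_j^2x_j^2V''(s);
\]
the only structural change compared with Lemma~\ref{lem-Subsol-EigenValEst} is that the rank-one correction now enters the upper bound rather than the lower bound.

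Third, to obtain the upper bound on $P[v]$, I would exploit the new hypothesis $V'\leq 1$, which reverses the sign game played in the subsolution version: $-K\leq -KV'(s)$, so $\lambda_i(D^2v-KI) = \lambda_i(D^2v) - K \leq \lambda_i(D^2v) - KV'(s)$. Combining this with the eigenvalue upper bound from the previous step, using $a_i = d_i - K$, and invoking monotonicity of $\arctan$ gives
\[
\sum_{i=1}^n\arctan\lambda_i(D^2v-KI)\leq \sum_{i=1}^n\arctan\!\left(a_iV'(s)+\sum_{j=1}^nd_j^2x_j^2V''(s)\right),
\]
and adding $-v_t=-\widetilde\tau V'(s)$ to both sides yields the claimed inequality for $P[v]$.

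The main obstacle is a modest bookkeeping one: carefully tracking the two sign reversals relative to Lemma~\ref{lem-Subsol-EigenValEst}, namely that the rank-one term now augments the upper eigenvalue bound, and that the passage from $-K$ to $-KV'(s)$ is an \emph{increase} (since $V'\leq 1$) rather than a decrease. No new analytic technique beyond those already used in the subsolution case is required.
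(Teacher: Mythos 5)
Your proposal is essentially correct and takes the same route as the paper: decompose $D^2v = \mathfrak A + \mathfrak B$ with $\mathfrak B$ rank-one, read off the ordered eigenvalue bounds, then use $0<V'\le 1$ and $K>0$ to replace $-K$ by $-KV'$ inside $\arctan$, exactly as in the paper's three-line chain of inequalities.

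However, there is a sign issue that both the lemma statement (as printed) and your write-up leave unresolved. The hypothesis must be $V''\ge 0$, not $V''\le 0$: under $V''\le 0$ and $x\ne 0$ the asserted sandwich
\[
d_iV'(s)\le\lambda_i(D^2v)\le d_iV'(s)+\sum_{j=1}^n d_j^2x_j^2\,V''(s)
\]
has its upper bound \emph{strictly below} its lower bound, so the claim is vacuously false. That $V''\ge 0$ is the intended hypothesis is confirmed by the paper's own proof (which reads ``The conditions $V''\geq 0$ and $0<V'\leq 1$ implies \dots''), by Corollary~\ref{Coro-SupsolODE} (which assumes $V''\ge 0$), and by Lemma~\ref{lem-ODEInitial-Supsol-ConsRHS} (which produces $W'>0$, i.e.\ $V''>0$). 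Your Weyl step, and in particular your remark that the rank-one correction ``now enters the upper bound rather than the lower bound,'' silently uses $V''\ge 0$ to conclude that the single nonzero eigenvalue of $\mathfrak B$ is its maximum. That is the correct reading, but you should have flagged the discrepancy with the stated hypothesis rather than glossing over it with ``with attention to the sign.'' The rest of the argument --- the regularity at the origin carrying over verbatim, and $-K\le -KV'$ from $K>0,\ V'\le1$ together with $a_i=d_i-K$ and monotonicity of $\arctan$ --- is exactly what the paper does.
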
 
 \begin{proof}
    The regularity $v\in C^{2,1}(\mathbb R^{n+1}_-)$ and the estimates of $\lambda_i(D^2v(x,t))$ follow from the proof as in Lemma   \ref{lem-Subsol-EigenValEst}. The conditions $V''\geq 0$ and $0<V'\leq 1$ imply that 
     \[
     \begin{array}{llll}
         P[v](x,t)&\leq &\displaystyle -\widetilde\tau V'+\sum_{i=1}^n\arctan \left(d_iV'+\sum_{j=1}^nd_j^2x_j^2V''-K\right)\\
         &\leq & \displaystyle -\widetilde\tau V'+\sum_{i=1}^n\arctan\left(d_iV'+\sum_{j=1}^nd_j^2x_j^2V''-KV'\right)\\
         &= & \displaystyle -\widetilde\tau V'+\sum_{i=1}^n\arctan\left(a_iV'+\sum_{i=1}^nd_j^2x_j^2V''\right).
     \end{array}
     \]
     This finishes the proof of the desired estimate. 
 \end{proof}
 
 To proceed, we introduce the following two implicit functions. 
 
 \begin{lemma}\label{lem-construction-ImplicFunc-Hsw-F3}
     Let $\underline f$ be the monotone increasing function as chosen in \eqref{equ-choice-fBarrier} and \eqref{equ-chioce-fBarrier-converSpeed}. Then there exists a unique increasing positive function $\overline w(s)$ defined on $[0,+\infty)$ determined by 
     \[
     F_1(\overline w(s))=-\widetilde\tau \overline w(s)+\sum_{i=1}^n\arctan(a_i\overline w(s))=
     \Xi+\underline f(s),\quad\forall~s\geq 0.
     \]
     Especially, $0<\overline w(s)<1$ and satisfies $\overline w(s)\rightarrow 1$ as $s\rightarrow\infty.$
 \end{lemma}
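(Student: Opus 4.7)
The proof should proceed in close analogy with Lemma \ref{lem-construction-ImplicFunc-Hsw-F1}, but exploiting the opposite sign and monotonicity of $\underline f$ compared with $\overline f$. The plan is to establish strict monotonicity of $F_1$ on the whole of $[0,+\infty)$, then read off the existence, uniqueness, monotonicity, and asymptotic behavior of $\overline w$ from the inverse function.

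First I would check that $F_1$ is strictly increasing on $[0,+\infty)$. By differentiation,
\[
F_1'(w)=-\widetilde\tau+\sum_{i=1}^n\frac{a_i}{1+a_i^2 w^2}.
\]
For each $i$, a case distinction on the sign of $a_i$ gives $\frac{a_i}{1+a_i^2w^2}\geq \min\{a_i,0\}$ for all $w\geq 0$, so invoking the second inequality in \eqref{equ-choice-Theta-1} yields
\[
F_1'(w)\geq \Theta-\tau+\sum_{i=1}^n\min\{a_i,0\}>0,\qquad\forall\,w\geq 0.
\]
In particular $F_1:[0,+\infty)\to[0,+\infty)$ is a strictly increasing smooth bijection onto its image, with $F_1(0)=0$ and $F_1(1)=-\widetilde\tau+\sum_i\arctan a_i=\Theta$.

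Next, I would verify that the right-hand side $\Theta+\underline f(s)$ lies in the range $(0,\Theta)$ for all $s\geq 0$. The upper bound is immediate from $\underline f(s)<0$. For the lower bound, the monotonicity of $\underline f$ together with the last inequality in \eqref{equ-choice-Theta-1} gives $\Theta+\underline f(s)\geq \Theta+\underline f(0)>\frac{n}{2}\pi>0$. By the intermediate value theorem there is then a unique $\overline w(s)\in(0,1)$ with $F_1(\overline w(s))=\Theta+\underline f(s)$, so $\overline w$ is well-defined, positive, and strictly less than $1$.

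Finally, implicit differentiation in the relation $F_1(\overline w(s))=\Theta+\underline f(s)$ yields
\[
F_1'(\overline w(s))\,\overline w'(s)=\underline f'(s)\geq 0,
\]
which combined with $F_1'>0$ shows $\overline w$ is monotone increasing. Sending $s\to\infty$ uses $\underline f(s)=O(s^{-\beta/2})\to 0$, hence $F_1(\overline w(s))\to\Theta=F_1(1)$, and the continuity of $F_1^{-1}$ forces $\overline w(s)\to 1$. No step is particularly delicate here; the only point that requires care is verifying the global positivity of $F_1'$, which is precisely what the second inequality in \eqref{equ-choice-Theta-1} was chosen to guarantee.
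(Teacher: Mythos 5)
Your argument matches the paper's proof essentially step by step: the same lower bound $F_1'(w)\geq \Theta-\tau+\sum_i\min\{a_i,0\}>0$ from \eqref{equ-choice-Theta-1}, the same endpoint values $F_1(0)=0$ and $F_1(1)=\Theta$, the intermediate value theorem for existence/uniqueness, implicit differentiation for monotonicity, and passage to the limit for $\overline w(s)\to 1$. Your version is, if anything, slightly more careful in explicitly justifying $\Theta+\underline f(s)>0$ via the last inequality in \eqref{equ-choice-Theta-1}, which the paper leaves implicit.
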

 
 \begin{proof}
     From the choice of $\Xi$ as in \eqref{equ-choice-Theta} and \eqref{equ-choice-Theta-1}, we have 
     \[
     F_1'(\overline w)=-\widetilde\tau +\sum_{i=1}^n\dfrac{a_i}{1+a_i^2\overline w^2}>
     \Xi-\tau +\sum_{i=1}^n\min\{a_i,0\}>
     0,
     \]
     for all $0\leq \overline w\leq 1$. Furthermore,  
     \[
     F_1(0)=0<\Xi+\underline f(s)\quad\text{and}\quad F_1(1)=-\widetilde\tau +\sum_{i=1}^n\arctan a_i=\Xi > \Xi+\underline f(s).
     \]
     Hence, by the mean value theorem, there exists a unique $0<\overline w(s)< 1$ such that $F_1(\overline w(s))=\Xi+\underline f(s)$.  Sending $s\rightarrow\infty$, it follows immediately that $\overline w(s)\rightarrow 1$ at infinity. By taking partial derivative with respect to $s$, 
     \[
     -\widetilde\tau \underline w'(s)+\sum_{i=1}^n\dfrac{a_i}{1+(a_i\overline w(s))^2}\overline w'(s)=\underline f'(s)>0.
     \]
     Thus, $\overline w(s)$ is a monotone increasing function. 
 \end{proof}
 
 Furthermore, there exists a unique increasing  function $w_*(s)$   on $[0,+\infty)$ such that 
 \[
 -\widetilde\tau w_*(s)+\frac{n}{2}\pi=\Xi+\underline f(s),\quad\forall~s\geq 0.
 \]
 Especially, from the choice of $\overline w(s)$ in Lemma \ref{lem-construction-ImplicFunc-Hsw-F3} and \eqref{equ-choice-Theta-1}, $0<w_*(s)<\overline w(s)$, and 
 \[
 \lim_{s\rightarrow\infty}w_*(s)=\dfrac{\Xi-\frac{n}{2}\pi}{-\widetilde\tau }=\dfrac{\Xi-\frac{n}{2}\pi}{\Xi-\tau}<1,
 \]
 since $\tau=\sum_{i=1}^n\arctan\lambda_i(A)\in\left(-\frac{n}{2}\pi,\frac{n}{2}\pi\right)$. Thus, there exist $S,\delta>0$ such that 
 \[
 \overline w(s)\geq \frac{w_*(\infty)+\overline w(\infty)}{2}\quad\text{and}\quad \frac{w_*(\infty)+\overline w(\infty)}{2}\geq 
 w_*(s)+\delta,\quad\forall~s\geq S.
 \]
 
 \begin{lemma}\label{lem-InverseFunc-Supsol}
     There exists a   smooth function $h(s,w)\geq 0$ satisfying 
     \[
     F_2(w,h)=-\widetilde\tau w+\sum_{i=1}^n\arctan\left(a_iw+ h\right)=\Xi+\underline f(s),\quad\forall~w\in(w_*(s),\overline w(s)],~s\geq 0.
     \]
     Furthermore, $h(s,w)$ is monotone decreasing with respect to $w$ and monotone increasing with respect to $s$, 
     $h(s,\overline w(s))\equiv 0$,
     \[
     \dfrac{\partial h}{\partial w}(\infty,1)=-M(A,\Xi),
     \]
     and 
     \begin{equation}\label{equ-AsymBehav-hsw-Supsol}
     |h(s,w)-h(\infty,w)|\leq Cs^{-\frac{\beta}{2}},\quad\forall~ w\in\left[\frac{w_*(\infty)+\overline{w}(\infty)}{2},\overline{w}(s)\right],
     \end{equation}
     for sufficiently large $s$ and $C>0$. 
 \end{lemma}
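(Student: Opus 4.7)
The plan is to mirror the construction of Lemma \ref{lem-construction-ImplicFunc-Hsw}, but inverted: where in the subsolution case we looked for $h \leq 0$ on the interval $w \in [\underline w(s), w^*(s))$, here we instead look for $h \geq 0$ on $w \in (w_*(s), \overline w(s)]$, using $\underline f$ in place of $\overline f$. The monotonicities of $\overline w, w_*$ and the signs will flip accordingly, but the structure of the argument is identical.

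First I would establish existence and uniqueness of $h(s,w) \geq 0$ by the intermediate value theorem applied to $h \mapsto F_2(w,h)$ at fixed $(s,w)$. At $h=0$ we have $F_2(w,0) = F_1(w)$, and since $F_1$ is monotone increasing and $w \leq \overline w(s)$, one gets $F_2(w,0) \leq F_1(\overline w(s)) = \Theta + \underline f(s)$. As $h \to +\infty$, $F_2(w,h) \to -\widetilde\tau w + \frac{n}{2}\pi$, and since $w > w_*(s)$ with $-\widetilde\tau w_*(s) + \frac{n}{2}\pi = \Theta + \underline f(s)$, one gets $F_2(w,+\infty) > \Theta + \underline f(s)$. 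Since $\partial F_2/\partial h = \sum (1+(a_iw+h)^2)^{-1} > 0$, a unique $h(s,w) \in [0,+\infty)$ exists.

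Next, the implicit function theorem gives smoothness and yields
\[
\frac{\partial F_2}{\partial h} \cdot \frac{\partial h}{\partial w} + \frac{\partial F_2}{\partial w} = 0, \qquad \frac{\partial F_2}{\partial h} \cdot \frac{\partial h}{\partial s} = \underline f'(s) > 0.
\]
Combined with the positivity of $\partial F_2/\partial h$ and the choice of $\Theta$ in \eqref{equ-choice-Theta} and \eqref{equ-choice-Theta-1} (which makes $\partial F_2/\partial w = -\widetilde\tau + \sum a_i(1+(a_iw+h)^2)^{-1} > 0$), this shows $\partial h/\partial w < 0$ and $\partial h/\partial s > 0$, i.e.\ $h$ is monotone decreasing in $w$ and monotone increasing in $s$. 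The boundary identity $h(s,\overline w(s)) \equiv 0$ is immediate from $F_1(\overline w(s)) = \Theta + \underline f(s)$, and sending $s \to \infty$ in the defining relation (using Lemma \ref{lem-construction-ImplicFunc-Hsw-F3} and $\underline f(\infty)=0$) gives $h(\infty,1) = 0$.

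Finally, evaluating the implicit derivative relation at $(s,w) = (\infty,1)$ gives
\[
\frac{\partial h}{\partial w}(\infty,1) = \left(\widetilde\tau - \sum_{i=1}^n \frac{a_i}{1+a_i^2}\right)\left(\sum_{i=1}^n \frac{1}{1+a_i^2}\right)^{-1} = -M(A,\Theta),
\]
exactly as in Lemma \ref{lem-construction-ImplicFunc-Hsw}. For the quantitative estimate, I would subtract the defining relations at $s$ and at $\infty$ with $w$ fixed, obtaining
\[
\underline f(s) - 0 = \sum_{i=1}^n\bigl(\arctan(a_iw+h(s,w)) - \arctan(a_iw+h(\infty,w))\bigr),
\]
and use the mean value theorem together with the boundedness of $h$ and of $a_iw$ (on the compact strip where we need the estimate) to bound the right side below by $C(h(s,w)-h(\infty,w))$ in absolute value, so that $|h(s,w)-h(\infty,w)| \leq C|\underline f(s)| = O(s^{-\beta/2})$ by \eqref{equ-chioce-fBarrier-converSpeed}. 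No step here is a real obstacle since each piece is a direct adaptation of the subsolution case; the only point requiring care is verifying that the choice of $\Theta$ keeps $\partial F_2/\partial w > 0$ throughout the relevant range so that the monotonicities and the limit $h(\infty,1)=0$ hold unambiguously.
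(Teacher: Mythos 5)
Your proposal is correct and mirrors the paper's own argument almost step for step: intermediate value theorem for existence/uniqueness, implicit function theorem for smoothness and the sign of $\partial h/\partial w$ and $\partial h/\partial s$, the boundary identity $h(s,\overline w(s))\equiv 0$ from $F_1(\overline w(s))=\Theta+\underline f(s)$, and subtraction of the defining relations at $s$ and $\infty$ combined with the mean value theorem (using that $\arctan'$ is bounded below on the relevant compact range) for the decay estimate. Your version of the IVT step — directly comparing $-\widetilde\tau w+\tfrac{n}{2}\pi$ with $-\widetilde\tau w_*(s)+\tfrac{n}{2}\pi=\Theta+\underline f(s)$ via $w>w_*(s)$ — is in fact slightly cleaner than the chain printed in the paper, and your estimate correctly gets the direction needed (a lower bound on $|\arctan(a_iw+h(s,w))-\arctan(a_iw+h(\infty,w))|$ in terms of $|h(s,w)-h(\infty,w)|$, so that $|h(s,w)-h(\infty,w)|\leq C|\underline f(s)|$ follows).
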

 \begin{proof}
     Notice that for all $w\in(w_*(s),\overline w(s)]$, we have 
     \[
     \lim_{h\rightarrow 0}F_2(w,h)=F_2(w,0)=-\widetilde\tau w+\sum_{i=1}^n\arctan\left(a_iw\right)
     \leq \Xi+\underline f(s),\quad\forall~s\geq 0,
     \]
     and 
     \[
     \lim_{h\rightarrow+\infty}F_2(w,h)=-\widetilde\tau w+\frac{n}{2}\pi>\Xi
     \geq \Xi+\underline f(s),\quad\forall~s\geq 0.
     \]
     Since $F_2$ is monotone increasing with respect to $h$, the mean value theorem proves that there exists a unique implicit function $h(s,w)$ such that 
     \[
     F_2(w,h(s,w))=\Xi+\underline f(s),\quad\forall~w\in(w_*(s),\overline w(s)],~s\geq 0.
     \]
     Especially, $0\leq h(s,w)<+\infty$ and is bounded when $w$ is away from $w_*(s)$. By the implicit function theorem, $h$ is a smooth function that satisfies 
     \[
     \dfrac{\partial F_2}{\partial h}(w,h)\cdot \dfrac{\partial h}{\partial w}(s,w)+\dfrac{\partial F_2}{\partial w}=0,
     \quad\text{and}\quad 
      \dfrac{\partial F_2}{\partial h}(w,h)\cdot \dfrac{\partial h}{\partial s}(s,w)=\underline f'(s)>0.
     \]
     By a direct computation, together with the choice of $\Xi$ as in \eqref{equ-choice-Theta} and \eqref{equ-choice-Theta-1}, the  inequalities above imply that $h(s,w)$ is monotone decreasing with respect to $w$ and increasing with respect to $s$. Sending $(s,w)\rightarrow (\infty,1)$, we have $h(s,w)\rightarrow 1$ and
     \[
     \dfrac{\partial h}{\partial w}(\infty,1)=\left(\widetilde\tau -\sum_{i=1}^n\dfrac{a_i}{1+a_i^2}\right)\cdot \left(\sum_{i=1}^n\dfrac{1}{1+a_i^2}\right)^{-1}=-M(A,\Xi).
     \]
     Eventually, for all $w\in\left[\frac{w_*(\infty)+\overline{w}(\infty)}{2},\overline{w}(s)\right]$ and sufficiently large $s$, $\overline w(s), w_*(s)$ and $h(s,w)$ are bounded functions. Hence,  there exists $C>0$ such that 
     \[
     \begin{array}{llllll}
         0-\underline f(s)&=& F_2(w,h(\infty,w))-F_2(w,h(s,w))\\
         &=&\displaystyle \sum_{i=1}^n\left(\arctan(a_iw+h(\infty,w))-\arctan(a_iw+h(s,w))\right)\\
         &\leq & C(h(\infty,w)-h(s,w)).
     \end{array}
     \]
     Inequality in \eqref{equ-AsymBehav-hsw-Supsol} follows from the asymptotic behavior of $\underline f$. This finishes the proof of this lemma.
 \end{proof} 
 \begin{corollary}\label{Coro-SupsolODE} 
    Let  $v$ be as in \eqref{equ-def-GeneraSym}.
    If $V\in C^2([0,+\infty))$, 
     \[
     0<V'\leq 1,\quad V''\geq 0\quad\text{and}\quad 2d_n(s+1)V''\leq h(s,V')\quad\text{in }s\geq 0,
     \]
     Then $v$ is a supersolution to $P[v]=\Xi-f(x,t)$ in $\mathbb R^{n+1}_-$. 
 \end{corollary}
 \begin{proof}
     From the definition of generalized symmetric function as in \eqref{equ-def-GeneraSym}, by Lemmas \ref{lem-Supsol-EigenValEst} and \ref{lem-InverseFunc-Supsol}, 
     \[
     \begin{array}{llllll}
         P[v]&\leq &\displaystyle  -\widetilde\tau V'+\sum_{i=1}^n\arctan\left(a_iV'+\sum_{i=1}^nd_j^2x_j^2V''\right)\\
         &\leq & \displaystyle -\widetilde\tau V'+\sum_{i=1}^n\arctan\left(
             a_iV'+2d_n(s+1)V''
         \right)\\
         &\leq & F_2(V',h(s,V'))\\
         &= & \Xi+\underline f(s),
     \end{array}
     \]
     in $\mathbb R^{n+1}_-$. This finishes the proof of this lemma.
 \end{proof}
 
 Similar to the proof of Lemma \ref{lem-ODEInitial-Subsol-ConsRHS} (see also  Lemma 14 in \cite{Bao-Liu-Wang-DiricLagrangianEntire}), we have the following existence, uniqueness result on initial value problem of ordinary differential equation along with its monotonicity and asymptotic behavior at infinity. 
 
 \begin{lemma}\label{lem-ODEInitial-Supsol-ConsRHS}
    For any $w_0\in (w_*(0),\overline w(0))$, there exists a unique solution $W\in C^1([0,+\infty))$ to the initial value problem  
     \begin{equation}\label{equ-ODEInitial-Supsol-ConsRHS}
         \left\{
             \begin{array}{llll}
                 \displaystyle \dfrac{\mathrm dw}{\mathrm ds}=\dfrac{h(s,w)}{2d_n(s+1)}, & \text{in }s\geq 0,\\
                 w(0)=w_0.
             \end{array}
         \right.
     \end{equation}
     Furthermore, $W'(s)>0$, $w_*(s)< W(s)<\overline w(s)$, and $W$ satisfies \eqref{equ-Asym-InitSols} at infinity.
 \end{lemma}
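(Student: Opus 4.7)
The plan is to mirror the three-step argument used for Lemma \ref{lem-ODEInitial-Subsol-ConsRHS}, with the sign of $h$ and the direction of monotonicity reversed throughout. First, since $w_0\in(w_*(0),\overline w(0))$, we may choose $s_0>0$ such that the closed rectangle
\[
[0,s_0]\times\left[\frac{w_0+w_*(0)}{2},\frac{w_0+\overline w(0)}{2}\right]
\]
lies inside the open region $\{(s,w):s\geq 0,\ w\in(w_*(s),\overline w(s))\}$, where the right-hand side $h(s,w)/(2d_n(s+1))$ of \eqref{equ-ODEInitial-Supsol-ConsRHS} is smooth and hence Lipschitz. The Picard-Lindel\"of theorem yields a unique $C^1$ local solution $W$ near $s=0$. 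Since $h(s,w)>0$ on that region by Lemma \ref{lem-InverseFunc-Supsol}, we have $W'(s)>0$ wherever the solution exists.

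Second, we show that $W$ remains strictly inside the strip $(w_*(s),\overline w(s))$. If there were a first point $s_1>0$ with $W(s_1)=\overline w(s_1)$, then
\[
W'(s_1)=\lim_{s\to s_1^-}\dfrac{W(s)-W(s_1)}{s-s_1}\geq \overline w{}'(s_1)>0
\]
by the monotonicity of $\overline w$ from Lemma \ref{lem-construction-ImplicFunc-Hsw-F3}, whereas the equation forces $W'(s_1)=h(s_1,\overline w(s_1))/(2d_n(s_1+1))=0$, a contradiction. Similarly, if $s_2>0$ were a first point with $W(s_2)=w_*(s_2)$ from above, then $W'(s_2^-)\leq w_*'(s_2)<\infty$, while the behaviour $h(s,w)\to+\infty$ as $w\to w_*(s)^+$ (from the analogue of the argument in Lemma \ref{lem-InverseFunc-Supsol} at the endpoint $w_*$) forces $W'(s_2^-)=+\infty$, again a contradiction. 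These two-sided bounds confine $W$ to a compact set of the regularity region, and the Carath\'eodory extension theorem upgrades the local solution to a global $C^1$ solution on $[0,+\infty)$.

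Third, monotonicity and the upper bound $W<\overline w(s)<1$ imply that $W(\infty)\in(w_0,1]$ exists. To conclude $W(\infty)=1$, argue by contradiction: if $W(\infty)<1$, then by Lemma \ref{lem-construction-ImplicFunc-Hsw-F3} we have $\overline w(s)-W(s)\geq c>0$ for $s\gg 1$, and the monotonicity of $h(s,\cdot)$ together with $h(s,\overline w(s))\equiv 0$ gives $h(s,W(s))\geq\epsilon>0$, so
\[
W'(s)=\dfrac{h(s,W(s))}{2d_n(s+1)}\geq \dfrac{\epsilon}{2d_n(s+1)},
\]
which is not integrable at infinity, contradicting the existence of a finite limit. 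To refine the rate, set $t:=\ln(s+1)$ and $\varphi(t):=1-W(s(t))>0$. Using \eqref{equ-AsymBehav-hsw-Supsol} to split $h(s,W)=(h(s,W)-h(\infty,W))+h(\infty,W)$ and Taylor expanding $h(\infty,\cdot)$ around $w=1$ via $\partial h/\partial w(\infty,1)=-M(A,\Theta)$, one obtains
\[
\varphi'(t)=-M(A,\Theta)\varphi+O(e^{-\beta t/2})+O(\varphi^2),
\]
which is identical in structure to \eqref{equ-temp-ODE-varphi-subsol}. Since $M(A,\Theta)>\beta/2$ by \eqref{equ-choice-Theta}, the same asymptotic stability argument (Gronwall after multiplying by $e^{(M(A,\Theta)-\epsilon)t}$) yields $\varphi(t)=O(e^{-\beta t/2})$, which translates back to $W(s)=1+O(s^{-\beta/2})$, as required by \eqref{equ-Asym-InitSols}. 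The main thing to watch is simply the sign convention: here $W$ approaches $1$ from below rather than above, $h\geq 0$ rather than $h\leq 0$, and the relevant one-sided barriers $\overline w, w_*$ swap roles with $\underline w, w^*$, but every estimate in the subsolution proof carries over after these replacements.
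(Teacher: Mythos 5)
Your proof is correct and takes essentially the same approach the paper indicates (the paper omits this proof, saying it is "almost identical" to Lemma \ref{lem-ODEInitial-Subsol-ConsRHS}, and you reconstruct it faithfully). You correctly swap all the signs and barriers: $h\geq 0$ instead of $h\leq 0$, $W$ increasing instead of decreasing, $\overline w$ playing the role of $\underline w$ as the barrier where $h$ vanishes (so that $W$ touching it would force $W'=0$, contradicting $\overline w{}'>0$), $w_*$ playing the role of $w^*$ as the barrier where $h$ blows up, the limit $W(\infty)=1$ approached from below with $\varphi:=1-W$, and the same linearization and asymptotic-stability conclusion via $\partial h/\partial w(\infty,1)=-M(A,\Theta)$ and $M(A,\Theta)>\beta/2$.
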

  
 Corresponding to Lemma \ref{lem-ConsTru-Subsol-Exterio-Trans}, we have the following existence of supersolutions. 
For any given $w_0\in(\underline{w}(0),w^*(0))$, let 
    \[
     \overline v(x,t):=\int_0^{s(x,t)}W(\tau)\mathrm d \tau+C,\quad\forall~(x,t)\in\mathbb R^{n+1}_-,
    \]
     where $W(\tau)$ is the solution to problem \eqref{equ-ODEInitial-Supsol-ConsRHS} with  initial value $W(0)=w_0$ constructed in Lemma \ref{lem-ODEInitial-Supsol-ConsRHS}, $C$ is a constant to be determined. Again, by the fact that $M(A,\Xi)>\frac{\beta}{2}>1$, we may choose suitable $C$ to match the asymptotic behavior at infinity, which leads to the following result. 
 \begin{lemma}\label{lem-ConsTru-Supsol-Exterio-Trans}
     There exists a constant $C$ such that the function $\overline v(x,t)$ defined above 
     satisfies 
     \[
     \left\{
         \begin{array}{llllll}
             P[\overline v]\leq\Xi-f(x,t), & \text{in }\mathbb R^{n+1}_-,\\
             \displaystyle \overline v(x,t)=\widetilde\tau t+\frac{1}{2}x'Dx+O(s^{1-\frac{\beta}{2}}(x,t)), & \text{as }-t+|x|^2\rightarrow\infty.
         \end{array}
     \right.
     \]
 \end{lemma}
 The proof of Lemmas \ref{lem-ODEInitial-Supsol-ConsRHS} and \ref{lem-ConsTru-Supsol-Exterio-Trans} are omitted here, since they are almost identical to the proof of Lemmas \ref{lem-ODEInitial-Subsol-ConsRHS} and \ref{lem-ConsTru-Subsol-Exterio-Trans} respectively.

 \subsection{Proof of Theorem \ref{thm-Main3-EntireDiri}}\label{seclabel-Subsec-ProofExistence}
 
 By the results in Lemmas \ref{lem-ConsTru-Subsol-Exterio-Trans} and \ref{lem-ConsTru-Supsol-Exterio-Trans}, we have  viscosity subsolution $\underline v$ and viscosity supersolution $\overline v$ satisfying 
 \[
 P[\underline v](x,t)\geq \Xi-f(x,t)\quad\text{and}\quad 
 P[\overline v](x,t)\leq \Xi-f(x,t)\quad\text{in }\mathbb R^{n+1}_-,
 \]
 with  asymptotic behavior 
 \begin{equation}\label{equ-temp-SameAsymBehav}
    \underline v(x,t),~  \overline v(x,t)=\widetilde\tau t+\frac{1}{2}x'Dx+(s^{1-\frac{\beta}{2}}(x,t)),\quad \text{as }-t+|x|^2\rightarrow\infty.
 \end{equation}
Applying comparison principle (see for instance \cite[Chapter 14]{Book-Lieberman-SecondParboDE}), we have 
\[
\underline v(x,t)\leq \overline v(x,t)\quad\text{in }\mathbb R^{n+1}_-.
\]
Let 
\[
\mathcal S_{\overline v}:=\{v~:~v\text{ is a weak viscosity subsolution to }P[v]=\Xi-f\text{ and }v\leq \overline v\text{ in }\mathbb R^{n+1}_-\}.
\]
From the results above, $\underline v\in\mathcal S_{\overline v}$. 
Therefore, by Lemma \ref{lem-PerronMethod-WeakSol}, 
\[
v(x,t):=\sup\{\widetilde v(x,t)~:~\widetilde{v}\in\mathcal S_{\overline v}\}
\]
is a weak viscosity solution to $P[v]=\Xi-f$ in $\mathbb R^{n+1}_-$. Furthermore, since $\underline v$ and $\overline v$ satisfy \eqref{equ-temp-SameAsymBehav}, it follows from definition that $v$ becomes a weak viscosity solution to \eqref{equ-ExteriorDiri-Translated}. 

From the definition of USC envelop and LSC envelop, we have 
\[
v_*(x,t)\leq v(x,t)\leq v^*(x,t)\quad\text{in }\mathbb R^{n+1}_-.
\]
On the other hand, $v_*$ and $v^*$ are viscosity supersolution and viscosity subsolution to $P[v]=\Xi-f(x,t)$ in $\mathbb R^{n+1}_-$ respectively, with asymptotic behavior \eqref{equ-temp-SameAsymBehav}. By comparison principle,  
\[
v^*(x,t)\leq v_*(x,t)\quad\text{in }\mathbb R^{n+1}_-.
\]
Therefore, $v_*=v=v^*$, which implies that $v$ is continuous in $\mathbb R^{n+1}_-$. Since $v=v_*$ is  a viscosity supersolution and $v=v^*$ is a viscosity subsolution to \eqref{equ-ExteriorDiri-Translated}, by definition, $v$ is a viscosity solution to \eqref{equ-ExteriorDiri-Translated}. This finishes the proof of Theorem \ref{thm-Main3-EntireDiri}.

When $f\in C^0(\mathbb R^{n+1}_-)$ has compact support, for any $\zeta>0$, we only need to take $\beta>\zeta+2$ in the proof above. Following the same procedure, we obtain a viscosity solution to \eqref{equ-Prob-EntireDiri-InThm} with enhanced asymptotic behavior given in \eqref{equ-enhanced-AsymBehav}.

\section{Proof of Theorems \ref{thm-Main1-AsymBehav} and \ref{thm-main5-GeneralThm}}\label{seclabel-Theorem-AsymBehav}

In this section, we apply the Liouville type rigidity in $\mathbb R^n\times(-\infty,-T]$, which transforms the study of  asymptotic behavior into an initial value problem. Therefore, the comparison principle without growth condition (Theorem \ref{thm-Main2-Uniqueness-Extended}) and the existence results in Theorem \ref{thm-Main3-EntireDiri} imply the asymptotic behavior at infinity of ancient solutions. 

We will prove Theorem \ref{thm-main5-GeneralThm} i.e.,  the polynomial convergence rate for viscosity solutions,  in subsection \ref{seclabel-PolyNomConv-Subsec}. Then, refine it into exponential convergence rate for classical solutions with bounded Hessian matrix i.e.  Theorem \ref{thm-Main1-AsymBehav},  in subsection \ref{seclabel-ExpoNenConv-Subsec}.  

\subsection{The convergence rate for viscosity solutions}\label{seclabel-PolyNomConv-Subsec}
 
From the choice of $T$ in \eqref{equ-def-T}, $f(x,t)\equiv 0$ in $\mathbb R^{n}\times(-\infty,-T]$. Therefore, under any condition such that Liouville type rigidity holds, there exist $A\in\mathrm{Sym}(n), b\in\mathbb R^n,  c \in\mathbb R$ and $\tau:=\sum_{i=1}^n\arctan\lambda_i(A)$ such that 
\begin{equation}\label{equ-temp-rigidityResult}
u(x,t)=\tau t+\frac{1}{2}x'Ax+b\cdot x+c,\quad\forall~(x,t)\in\mathbb R^n\times(-\infty,-T].
\end{equation}

Consequently, under the conditions in Theorems \ref{thm-Main1-AsymBehav} or \ref{thm-main5-GeneralThm}, $u$ becomes a classical (or viscosity, respectively) solution to an initial value type problem 
\begin{equation}\label{equ-InitialValProb-InProof2}
\left\{
    \begin{array}{lllll}
        \displaystyle u_t=\sum_{i=1}^n\arctan\lambda_i(D^2u)+f(x,t), & \text{in }\mathbb R^n\times(-T,0],\\
        \displaystyle u(x,-T)=\frac{1}{2}x'Ax+b\cdot x+\widetilde c, & \text{in }\mathbb R^n,
    \end{array}
\right.
\end{equation}
with $\widetilde c:=c-\tau T$. 
% In other words, $v$ is a classical (or viscosity, respectively) solution to 
% \begin{equation}\label{equ-InitialValProb-InProof}
% \left\{
%     \begin{array}{lllll}
%         \displaystyle v_t=\sum_{i=1}^n\arctan\lambda_i(D^2v)+f(x,t-T), & \text{in }\mathbb R^n\times(0,T],\\
%         \displaystyle v(x,0)=\frac{1}{2}x'Ax+b\cdot x+\widetilde c , & \text{in }\mathbb R^n.
%     \end{array}
% \right.
% \end{equation}

For $A,b,c,\tau,f$ as above, let $\Upsilon\in C^0(\mathbb R^{n+1}_-)$ be the unique viscosity solution to 
\[
\left\{
    \begin{array}{lllll}
        \displaystyle\Upsilon_t=\sum_{i=1}^n\arctan\lambda_i(D^2\Upsilon)+f(x,t), & \text{in }\mathbb R^{n+1}_-,\\
        \displaystyle \Upsilon(x,t)=\tau t+\frac{1}{2}x'Ax+b\cdot x+c+O(\mathcal R^{-\zeta}(x,t)), & \text{as }\mathcal R(x,t)\rightarrow\infty,
    \end{array}
\right.
\]
where $\zeta>0$ can be arbitrarily large. 
Based on the fact that $f$ having compact support, the existence of $\Upsilon$ is guaranteed by Theorem \ref{thm-Main3-EntireDiri}.
Applying comparison principle (the classical one, see for instance Theorem 14.1 in \cite{Book-Lieberman-SecondParboDE}) with respect to $\Upsilon$ on $\mathbb R^n\times(-\infty,-T]$, it proves that
\[
\Upsilon(x,t)=\tau t+\frac12x'Ax+b\cdot x+ c,\quad\text{in }\mathbb R^n\times (-\infty,-T],
\]
and there exists $C>0$ such that 
\[
\left|\Upsilon(x,t)-\left(\tau t+\frac{1}{2}x'Ax+b\cdot x+c\right)\right|\leq C|x|^{-\zeta},\quad\text{in }\mathbb R^n\times[-T,0].
\]
Therefore, $\Upsilon$ is also a viscosity solution to initial value problem \eqref{equ-InitialValProb-InProof2}.
After a translation in $t$-variable, we may apply   Theorem \ref{thm-Main2-Uniqueness-Extended} and obtain 
\[
u(x,t)=\Upsilon(x,t),\quad\forall~(x,t)\in\mathbb R^{n}\times[-T,0]. 
\]
Therefore, 
\[
\left|u(x,t)-\left(\tau t+\frac{1}{2}x'Ax+b\cdot x+c\right)\right|\leq C|x|^{-\zeta},\quad\forall~(x,t)\in\mathbb R^{n}\times[-T,0].
\]
By the arbitrariness of $\zeta>0$,
this finishes the proof of Theorem \ref{thm-main5-GeneralThm}, as  in \eqref{equ-AsymBehav-Rough-C0Est-E}.

Using a similar argument, we provide the proof of Corollary \ref{Coro-InitialValProb-AsymBehav}. 

\begin{proof}[Proof of Corollary \ref{Coro-InitialValProb-AsymBehav}]
    Let $v(x,t):=u(x,t+\overline T)$ in $\mathbb R^n\times[-\overline T,0]$, which is a viscosity solution to 
    \begin{equation}\label{equ-temp-DirichletCoro}
    \left\{
        \begin{array}{llll}
            \displaystyle v_t=\sum_{i=1}^n\arctan\lambda_i(D^2v)+\widetilde f(x,t), & \text{in }\mathbb R^n\times[-\overline T,0],\\
            \displaystyle v(x,-\overline T)=\frac{1}{2}x'Ax+b\cdot x+c, & \text{in }\mathbb R^n,
        \end{array}
    \right.
    \end{equation}
    where
    \[
    \widetilde f(x,t):=\left\{
        \begin{array}{lllll}
            f(x,t+\overline T), & (x,t)\in\mathbb R^n\times[-\overline T,0],\\
            0, & (x,t)\in\mathbb R^n\times(-\infty,-\overline T),
        \end{array}
    \right.
    \]
    By definition, $\widetilde f\in C^0(\mathbb R^{n+1}_-)$ satisfies condition \eqref{equ-cond-RHS-ConverSpeed} for the same $\beta>2$ given in \eqref{equ-cond-InitialValProb-Coro}. 
    By Theorem \ref{thm-Main3-EntireDiri}, 
    let $\Upsilon\in C^0(\mathbb R^{n+1}_-)$ be the viscosity solution to 
    \[
    \left\{
        \begin{array}{lllll}
            \displaystyle \Upsilon_t=\sum_{i=1}^n\arctan\lambda_i(D^2\Upsilon)+\widetilde f(x,t), & \text{in }\mathbb R^n\times(-\infty,0],\\
            \displaystyle \Upsilon(x,t)=-\tau t+\frac{1}{2}x'Ax+b\cdot x+\widetilde c+O(\mathcal R^{2-\beta}(x,t)), & \text{as }\mathcal R(x,t)\rightarrow\infty,
        \end{array}
    \right.
    \]
    where $\widetilde c:=c-\tau \overline T.$
    Applying comparison principle on $\mathbb R^n\times(-\infty,-\overline T]$, we find that 
    \[
    \Upsilon(x,t)=\frac{1}{2}x'Ax+b\cdot x+ c,\quad\forall~(x,t)\in \mathbb R^n\times(-\infty,-\overline T].
    \]
    Therefore, $\Upsilon$ is also a viscosity solution to \eqref{equ-temp-DirichletCoro}. Using Theorem \ref{thm-Main2-Uniqueness-Extended},   we have 
    \[
    v\equiv\Upsilon,\quad\text{i.e.,}\quad u(x,t)=\Upsilon(x,t-\overline T),\quad\forall~(x,t)\in\mathbb R^n\times[0,\overline T].
    \]
    Thus, the asymptotic behavior of $v(x,t)$ at infinity follows immediately from the asymptotic behavior of $\Upsilon$. This finishes the proof of Corollary \ref{Coro-InitialValProb-AsymBehav}. 
\end{proof}

\subsection{Exponential convergence rate for  solutions with interior estimates}\label{seclabel-ExpoNenConv-Subsec}

In this subsection, we improve the convergence speed into exponential rate, using the linearization method and  the fundamental solution to linear parabolic equations, see for instance \cite{Book-Friedman-PDE-ParaboForm}.
The existence of fundamental solution require that the linearized equation have uniformly parabolic coefficients that are bounded in some H\"older continuous space as in $C^{\alpha,\frac{\alpha}{2}}(\mathbb R^{n+1}_-), 0<\alpha<1$. Therefore, the proof only work for classical solutions, instead of viscosity solutions.   

In summary, once the following theorem is proven, Theorem \ref{thm-Main1-AsymBehav}  will follow immediately. The exponential decay is achieved through an iterative linearization scheme, leveraging the uniform 
parabolicity induced by bounded Hessian matrix. 
\begin{theorem}\label{thm-Mainthm-Asym-InSummary}
    Let $n,f,u,E$ be  as in Theorem \ref{thm-Main1-AsymBehav}. If $u\in C^{2,1}(\mathbb R^{n+1}_-)$ is a classical solution to \eqref{equ-LagFlow-Extended} in $\mathbb R^{n+1}_-$ and satisfies condition \eqref{equ-cond-HessianBdd}. Then the polynomial convergence rate \eqref{equ-AsymBehav-Rough-C0Est-E} can be improved into \eqref{equ-Result-AsymBehav-u}. 
\end{theorem}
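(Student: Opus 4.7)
The plan is to linearize \eqref{equ-LagFlow-Extended} about the quadratic model $v_0(x,t) := \tau t + \tfrac12 x'Ax + b\cdot x + c$ and represent $E = u - v_0$ through the fundamental solution of the resulting linear parabolic operator, whose principal symbol becomes asymptotically $(I+A^2)^{-1}$ by virtue of the polynomial decay provided by Theorem \ref{thm-main5-GeneralThm}. Since $v_0$ is a stationary solution of the homogeneous flow (in the sense $\partial_t v_0 = \sum_k \arctan\lambda_k(A) = \tau$), the mean value theorem produces measurable coefficients
\[
a_{ij}(x,t) := \int_0^1 \partial_{M_{ij}} g\bigl(A + \theta D^2E(x,t)\bigr)\, d\theta, \qquad g(M) := \sum_{k=1}^n \arctan\lambda_k(M),
\]
such that $E$ satisfies the linear parabolic equation $E_t - \sum_{i,j} a_{ij}(x,t) D_{ij}E = f(x,t)$ in $\mathbb R^{n+1}_-$, together with $E \equiv 0$ on $\mathbb R^n \times (-\infty,-T]$ (this is \eqref{equ-temp-rigidityResult}). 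The uniform Hessian bound \eqref{equ-cond-HessianBdd} makes $(a_{ij})$ uniformly elliptic with ratios depending only on $K$, and diagonalizing $A$ shows $a_{ij}(x,t) \to (I+A^2)^{-1}_{ij}$ pointwise as $D^2E \to 0$.

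Next I would quantify this convergence and build the fundamental solution. Theorem \ref{thm-main5-GeneralThm} gives $|E(x,t)| \leq C_\zeta |x|^{-\zeta}$ for arbitrary $\zeta > 0$. In the exterior region $\{|x| \geq 2S\} \times (-T,0]$, where $f \equiv 0$, $E$ solves the \emph{homogeneous} linearized equation, so standard parabolic Krylov--Safonov followed by interior Schauder estimates (the uniform Hessian bound yields $C^\alpha$ coefficients via Evans--Krylov on $u$) upgrades the $C^0$ decay to $|D^{2,1}E(x,t)| \leq C_\zeta |x|^{-\zeta}$ for every $\zeta$. Consequently $|a_{ij}(x,t) - (I+A^2)^{-1}_{ij}| \leq C_\zeta |x|^{-\zeta}$ and the $a_{ij}$ are H\"older continuous. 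These two properties are exactly what is needed to construct the fundamental solution $\Gamma(x,t;y,s)$ via Friedman's parametrix method, using as ansatz the explicit heat kernel
\[
\Gamma_\infty(x,t;y,s) = \frac{\sqrt{\det(I+A^2)}}{(4\pi(t-s))^{n/2}} \exp\!\left(-\frac{(x-y)'(I+A^2)(x-y)}{4(t-s)}\right)
\]
of the constant coefficient limit operator $\partial_t - \sum_i (1+a_i^2)^{-1}\partial_{ii}$. By Duhamel's principle and $E(\cdot,-T)\equiv 0$,
\[
E(x,t) = \int_{-T}^t\!\!\int_{B_S} \Gamma(x,t;y,s)\, f(y,s)\, dy\, ds, \qquad t \in (-T,0].
\]
Expanding $(x-y)'(I+A^2)(x-y) = x'(I+A^2)x - 2y'(I+A^2)x + y'(I+A^2)y$ with $|y|\leq S$ accounts for the correction $\tilde S|(I+A^2)x|$ in the exponent of \eqref{equ-Result-AsymBehav-u}, while integrating $(t-s)^{-n/2} e^{-x'(I+A^2)x/(4(t-s))}$ over $s\in[-T,t]$ (after factoring out the leading exponential evaluated at $s=-T$) yields the prefactor $(t+T)^{2-n/2}$.

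\textbf{Iteration and main obstacle.} The delicate point is promoting the universal Aronson-type Gaussian bound for $\Gamma$, which naturally comes only with constants controlled by the ellipticity ratio (hence the wrong sharp metric), to the sharp bound with the specific matrix $I + A^2$ appearing in \eqref{equ-Result-AsymBehav-u}. This is where the iterative scheme enters: writing $\Gamma = \Gamma_\infty + \Gamma_1$, the remainder $\Gamma_1$ satisfies a Volterra-type integral equation whose kernel contains $\bigl(a_{ij}(x,t) - (I+A^2)^{-1}_{ij}\bigr) D_{ij}\Gamma_\infty$, and the polynomial smallness $|x|^{-\zeta}$ available for arbitrarily large $\zeta$ lets one sum the resulting Neumann series while preserving the Gaussian leading order. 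Each iteration of the scheme absorbs one factor of the coefficient error, so that after finitely many steps $\Gamma$ inherits the sharp Gaussian bound of $\Gamma_\infty$ up to multiplicative constants. Plugging into the Duhamel representation then produces \eqref{equ-Result-AsymBehav-u} and completes the proof. The main obstacle to executing this plan is technical: controlling the parametrix remainder with the correct quadratic form in the exponent, rather than any one with the right order of magnitude, which is precisely why the arbitrary-rate polynomial decay of $a_{ij} - a^\infty_{ij}$ (and not merely a fixed rate) is crucial.
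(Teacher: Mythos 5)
Your set-up matches the paper's opening moves: linearize via Newton--Leibnitz, upgrade the polynomial $C^0$ decay of $E$ to decay of $D^2E$ and hence of $a_{ij}-a_{ij}(\infty)$ by interior parabolic estimates, and represent $E$ over $(-T,0]$ through a Duhamel integral against the fundamental solution with $f$ supported in $B_S\times[-T,0]$. The exponent expansion that produces the $\tilde S|(I+A^2)x|$ correction and the prefactor $(t+T)^{2-n/2}$ is also the right calculation. Where you diverge from the paper --- and where the argument breaks --- is the iteration.

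You propose to iterate on $\Gamma$ itself, writing $\Gamma=\Gamma_\infty+\Gamma_1$ and summing the Volterra/Neumann series to conclude that $\Gamma$ ``inherits the sharp Gaussian bound of $\Gamma_\infty$.'' This does not work, for two related reasons. First, taking the constant-coefficient limit kernel $\Gamma_\infty$ as the parametrix ansatz leaves a Volterra kernel $(a_{ij}(\xi,\tau)-a_{ij}(\infty))D_{ij}\Gamma_\infty$, and near the source point $y\in B_S$ the coefficient error is $O(1)$, not small --- so the polynomial decay $|a_{ij}-a_{ij}(\infty)|\lesssim|\xi|^{-\zeta}$ buys you nothing in exactly the region that controls the near-diagonal singularity. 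One must instead freeze coefficients at $(y,s)$ (as in Friedman's Levi method) to make the series converge, but then the leading Gaussian has the quadratic form $a^{ij}(y,s)$, not $I+A^2$. Second, even if you could start with $\Gamma_\infty$, convolving $(t-\tau)^{-n/2}\exp(-\kappa_1|x-\xi|^2/4(t-\tau))$ against a remainder with Aronson constant $\kappa_2<\kappa_1$ yields, after the $\tau$-integration, a Gaussian with effective constant $\min\{\kappa_1,\kappa_2\}=\kappa_2$: the sharp constant is destroyed, not preserved. So the Neumann series for $\Gamma$ stabilizes at the Aronson (ellipticity-ratio) bound; it does not self-improve to $I+A^2$.

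The paper avoids this by iterating on $E$, not on $\Gamma$. One first extracts a weak Aronson-type bound $|E|+|D^2E|\lesssim\exp(-\upsilon_0^*|x|^2/4(t+T))$ from the variable-coefficient fundamental solution (Lemma \ref{lem-IntermiConvSpeed-ED2E}). One then rewrites the linearized equation with the frozen limit operator $\sum D_{M_{ij}}F(A)D_{ij}-\partial_t$ and absorbs the discrepancy $(a_{ij}(\infty)-a_{ij}(x,t))D_{ij}E$ into the source $g$: crucially, this product has the \emph{squared} decay $\exp(-2\upsilon_0^*|x|^2/4(t+T))$, because both factors decay like $\exp(-\upsilon_0^*|x|^2/4(t+T))$. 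Feeding this into the constant-coefficient heat kernel (Lemmas \ref{lem-Calculus-Convolution} and \ref{lem-Calculus-Convolution-CompactCase}) upgrades $E$'s rate from $\upsilon_0^*$ to $\min\{1+a_i^2,\,2\upsilon_0^*\}$ (Lemma \ref{lem-IterationScheme-Exponential}); iterating doubles $\upsilon_0^*$ each step until it exceeds $\max_i(1+a_i^2)$, after finitely many steps. This quadratic self-improvement in the decay rate --- driven by the \emph{product} $(a_{ij}(\infty)-a_{ij})D_{ij}E$ appearing in the source --- is the mechanism your proposal is missing, and it is what makes the sharp quadratic form $I+A^2$ accessible. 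You should replace your $\Gamma$-iteration with this $E$-iteration and the rest of your argument goes through.
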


As in the proof of Theorem  \ref{thm-Main3-EntireDiri}, we may assume without loss of generality that $A$ is diagonal, $b=0$ and $c=0$.

\begin{lemma}\label{lem-AsymBehav-HessianMatrix}
    Let $n,f,u,E$ be as in Theorem \ref{thm-Mainthm-Asym-InSummary}. 
    Suppose $E$ satisfies polynomial convergence rate \eqref{equ-AsymBehav-Rough-C0Est-E} for some $\zeta>0$ and $C>0$. Then $||D^2E||_{C^{\alpha,\frac\alpha2}(\mathbb R^{n+1}_-)}$ is bounded for any $0<\alpha<1$, and 
    \[
    |D^2E(x,t)|+|E_t(x,t)|\leq C\mathcal R^{-\zeta-2}(x,t),\quad\forall~(x,t)\in\mathbb R^{n+1}_-,
    \]
    for some positive constant $C>0$. 
\end{lemma}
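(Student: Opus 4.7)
The approach is to linearize the Lagrangian mean curvature flow equation around the reference polynomial $P(x,t) := \tau t + \tfrac{1}{2}x'Ax + b\cdot x + c$, combine the bounded Hessian assumption with interior $C^{2+\alpha,1+\alpha/2}$ regularity to obtain the global H\"older bound, and then extract the decay rate via a parabolic rescaling centered at points of large $\mathcal R$.

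Since $P$ itself solves \eqref{equ-LagFlow-Extended} with $f\equiv 0$ (because $P_t=\tau=\sum_i\arctan\lambda_i(A)=\sum_i\arctan\lambda_i(D^2P)$), the fundamental theorem of calculus gives the linearized equation
\[
E_t = a^{ij}(x,t)D_{ij}E + f(x,t) \quad\text{in }\mathbb R^{n+1}_-,
\]
where $a^{ij}(x,t):=\int_0^1 \partial_{M_{ij}}F((1-\sigma)A+\sigma D^2u(x,t))\,d\sigma$ with $F(M):=\sum_i\arctan\lambda_i(M)$. Condition \eqref{equ-cond-HessianBdd} forces $|D^2u|\leq K$ globally, so the eigenvalues of $(a^{ij})$ are pinched between two positive constants and the above is a uniformly parabolic linear equation on $\mathbb R^{n+1}_-$. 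For the first assertion, I would invoke the interior $C^{2+\alpha,1+\alpha/2}$ estimates for the Lagrangian mean curvature flow with bounded Hessian, as established by Nguyen--Yuan \cite{Nguyen-Yuan-PrioriEst-LagranMeanCurvFlow} (refined in Bhattacharya--Wall \cite{Bhattacharya-Wall-HessianEst-LagranFlow}), which give $\|D^2u\|_{C^{\alpha,\alpha/2}(Q_{1/2})}\leq C(n,K,\alpha)$ on every unit parabolic cylinder, depending only on $K$ and the modulus of continuity of $f$. Covering $\mathbb R^{n+1}_-$ by unit cylinders yields the claimed uniform bound on $\|D^2E\|_{C^{\alpha,\alpha/2}(\mathbb R^{n+1}_-)}$ for every $0<\alpha<1$.

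For the decay estimate, fix $(x_0,t_0)$ with $\mathcal R_0:=\mathcal R(x_0,t_0)$ large enough that the parabolic cylinder $Q_r(x_0,t_0)$ with $r:=\mathcal R_0/4$ lies outside $\mathrm{supp}(f)$; every $(x,t)\in Q_r$ then satisfies $\mathcal R(x,t)\geq \mathcal R_0/2$, so \eqref{equ-AsymBehav-Rough-C0Est-E} gives $|E(x,t)|\leq C\mathcal R_0^{-\zeta}$ on $Q_r$. Perform the parabolic rescaling
\[
\tilde u(y,s):=r^{-2}u(x_0+ry,\,t_0+r^2s), \qquad \tilde E(y,s):=r^{-2}E(x_0+ry,\,t_0+r^2s),
\]
on $(y,s)\in B_1\times(-1,0]$. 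Because $D^2_y\tilde u(y,s)=(D^2u)(x_0+ry,t_0+r^2s)$ and $\tilde u_s(y,s)=u_t(x_0+ry,t_0+r^2s)$, the function $\tilde u$ itself solves the homogeneous Lagrangian mean curvature flow equation on $B_1\times(-1,0]$ with $\|D^2_y\tilde u\|_{L^\infty}\leq K$. Applying Nguyen--Yuan to $\tilde u$ gives a $C^{\alpha,\alpha/2}(Q_{1/2})$ bound on $D^2_y\tilde u$ that is uniform in $r$, $x_0$, $t_0$, and consequently the rescaled linear coefficients $\tilde a^{ij}(y,s):=a^{ij}(x_0+ry,t_0+r^2s)$ (being a smooth function of $D^2_y\tilde u$) have uniform $C^{\alpha,\alpha/2}$ bounds on $Q_{1/2}$. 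Since $\tilde E$ satisfies the uniformly parabolic linear equation $\tilde E_s=\tilde a^{ij}(y,s)D^2_{y_iy_j}\tilde E$ on $Q_{1/2}$, interior parabolic Schauder estimates (see, e.g., \cite{Book-Friedman-PDE-ParaboForm,Book-Lieberman-SecondParboDE}) yield
\[
|D^2_y\tilde E(0,0)|+|\tilde E_s(0,0)|\leq C\|\tilde E\|_{L^\infty(Q_{1/2})}\leq Cr^{-2}\|E\|_{L^\infty(Q_r)}\leq C\mathcal R_0^{-\zeta-2}.
\]
Using $D^2_y\tilde E(0,0)=D^2E(x_0,t_0)$ and $\tilde E_s(0,0)=E_t(x_0,t_0)$, the decay estimate follows for large $\mathcal R_0$; for the remaining bounded range, the estimate is immediate from the first assertion together with the equation for $E_t$.

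\textbf{Expected main obstacle.} The critical and non-obvious point is the uniformity, in the base point and the scale $r$, of the H\"older norm of the rescaled coefficients $\tilde a^{ij}$; without this the Schauder constant in the final step would blow up with $r$ and the argument would collapse. The resolution is that the parabolic scaling $(y,s)\mapsto(x_0+ry,t_0+r^2s)$ is exactly the natural scaling of the Lagrangian mean curvature flow equation and leaves $\|D^2u\|_{L^\infty}$ invariant, so the scale-invariant Nguyen--Yuan interior estimate applied to $\tilde u$ transports directly to a uniform $C^{\alpha,\alpha/2}$ bound for $\tilde a^{ij}$. This is exactly where the bounded Hessian hypothesis \eqref{equ-cond-HessianBdd} plays its essential role.
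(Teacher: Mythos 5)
Your proposal is correct and follows essentially the same route as the paper's proof. Both arguments (i) linearize the equation via the Newton--Leibnitz/fundamental theorem of calculus to obtain a uniformly parabolic linear equation for $E$ with coefficients $a^{ij}$ whose uniform ellipticity comes from $|D^2u|\leq K$, (ii) parabolically rescale at scale $r\sim\mathcal R_0/4$ about a far-out base point so that $\tilde u$ solves the homogeneous LMCF equation, (iii) obtain scale-uniform $C^{\alpha,\alpha/2}$ bounds on the rescaled coefficients from interior estimates for the LMCF with bounded Hessian, and (iv) apply interior parabolic Schauder estimates to the linear equation for $\tilde E$ and translate back. The only cosmetic difference is the citation: you invoke Nguyen--Yuan (and Bhattacharya--Wall) for the a priori $C^{2+\alpha,1+\alpha/2}$ estimate, while the paper differentiates the equation and invokes Krylov--Safonov before bootstrapping to $C^{k,k/2}$ bounds; these are equivalent in substance. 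Your identification that scale-invariance of the interior estimate is the crux of the argument is exactly the point the paper is implicitly relying on.

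One caveat worth flagging (present in both your write-up and the paper's): the global $\|D^2E\|_{C^{\alpha,\alpha/2}(\mathbb R^{n+1}_-)}$ bound in the region intersecting $\mathrm{supp}(f)$ is asserted but not really proved, since $f$ is only assumed $C^0$ there and neither Nguyen--Yuan nor a differentiated Krylov--Safonov argument applies directly when $f\neq 0$. For the decay estimate this does not matter (there $f\equiv 0$), but for the global H\"older claim either one needs $f\in C^{\alpha,\alpha/2}$ or an additional compactness argument on $\overline{B_S}\times[-T,0]$.
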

\begin{proof}
    For any sufficiently large $R>2$ and any point $(y,\tau)\in\mathbb R^{n+1}_-$ with $\mathcal R(y,\tau)=R$, set
    \[
    D:=\left\{(z,\zeta)~:~\left(|\zeta|+\frac{1}{2}|z|^2\right)^{\frac{1}{2}}\leq\frac{3}{2}\quad\text{and}\quad 
    \zeta\leq -\frac{16\tau}{R^2}\right\},
    \]
    \[
    u_R(z,\zeta):=\left(\frac{4}{R}\right)^2
    u\left(y+\frac{R}{4}z,\tau+\frac{R^2}{16}\zeta\right),\quad(z,\zeta)\in D, 
    \]
    and   
    \[
    \begin{array}{lllll}
    E_R(z,\zeta)&:=&\displaystyle \left(\frac{4}{R}\right)^2
    E\left(y+\frac{R}{4}z,\tau+\frac{R^2}{16}\zeta\right)\\ 
    &=&\displaystyle u_R(z,\zeta)-\tau\left(\zeta+\frac{16}{R^2}\tau\right)-8\left(\frac{y}{R}+\frac{z}{4}\right)'A\left(\frac{y}{R}+\frac{z}{4}\right).
    \end{array}
    \]
    By the convergence rate \eqref{equ-AsymBehav-Rough-C0Est-E}, there exists $C>0$ such that for sufficiently large $R$,
    \[ 
    |u_R(z,\zeta)|\leq C\quad \text{and}\quad |E_R(z,\zeta)|\leq CR^{-\zeta-2},\quad\forall~(z,\zeta)\in D.
    \]
    Since $u$ is a classical solution having bounded Hessian matrix and $f$ has compact support, we find that for sufficiently large $R$, $u_R\in C^{2,1}(D)$ has bounded Hessian matrix and satisfies 
    \[
    \begin{array}{lllll}
        (u_R)_\zeta(z,\zeta)&=& \displaystyle u_t\left(y+\frac{R}{4}z,\tau+\frac{R^2}{16}\zeta\right)\\
        &=&\displaystyle \sum_{i=1}^n\arctan\lambda_i\left(D^2u
    \left(y+\frac{R}{4}z,\tau+\frac{R^2}{16}\zeta\right)
    \right)\\
    &=&\displaystyle \sum_{i=1}^n\arctan\lambda_i(D^2u_R(z,\zeta)),\quad\forall~(z,\zeta)\in D.
    \end{array}
    \]
    By taking partial derivatives with respect to $u_R$ and applying Krylov-Safonov H\"older inequality (see for instance Page 133 in \cite{Book-Krylov-Nonlinear-EllParabo-SecondOrder} or Theorem 4.1 in \cite{Krylov-Safonov-HarnackInequ-ParaboEqu}), for any $k\in\mathbb N$, there exists $C>0$ (which may vary from line to line) such that 
    \[
    ||u_R||_{C^{k,k/2}(\widetilde D)}\leq C,\quad C^{-1}I\leq D^2u_R\leq CI\quad\text{in }\widetilde D,
    \]
    where $\widetilde D$ denote a compact subset of $D$ that contain $(0,0)$ as an interior point. Consequently, for any $k\in\mathbb N$, there exists $C>0$ such that  for all sufficiently large $R$, 
    \[
        ||E_R||_{C^{k,k/2}(\widetilde D)}\leq C,\quad 
    C^{-1}I\leq (A+D^2E_R)\leq CI\quad\text{in }\widetilde D.
    \]
    By the Newton-Leibnitz formula, $E_R$ satisfies a parabolic equation 
    \[
    (E_R)_\zeta(z,\zeta)-\sum_{i,j=1}^na^{ij}_R(z,\zeta)D_{ij}E_R=0\quad\text{in }D,
    \]
    where $(a^{ij}_R(z,\zeta))_{n\times n}$ is a bounded, strictly positive matrix with elements
    \[
    a^{ij}_R(z,\zeta)=\int_0^1D_{M_{ij}}F(
    A+\theta D^2E_R(z,\zeta))\mathrm d \theta,
    \]
    and $D_{M_{ij}}F$ denotes the partial derivative of $F(M):=\sum_{i=1}^n\arctan\lambda_i(M)$ with respect to $M_{ij}$ variable. 
    By the interior Schauder estimates (see for instance Theorem 5 in \cite[Chapter 3]{Book-Friedman-PDE-ParaboForm}), 
    \[
    |D_z^iD_{\zeta}^jE_R(0,0)|\leq C||E_R||_{C^0(\widetilde D)}\leq CR^{-\zeta-2},\quad\forall~i+2j=k\geq 2.
    \]
    It follows that for all $(y,\tau)\in\mathbb R^{n+1}_-$, 
    \[
    |D_x^iD_t^jE(y,\tau)|\leq C\mathcal R^{-\zeta-k}(y,\tau),\quad\forall~i+2j=k\geq 2.
    \]
    This finishes the proof of the desired result. 
\end{proof}

To proceed, we introduce some calculus fact, which is closely related to the fundamental solutions of second order linear parabolic equations. 
By a change of variable, we have the following anisotropic version of Lemma 3 in \cite[Chapter 1]{Book-Friedman-PDE-ParaboForm}.

\begin{lemma}\label{lem-Calculus-Convolution}
    Let $\kappa_1,\kappa_2,\cdots,\kappa_n$ be a sequence of positive constants and 
    \[
    \kappa(x):=\sum_{i=1}^n\kappa_i|x_i|^2,\quad\forall~x=(x_1,x_2,\cdots,x_n)\in\mathbb R^n.
    \]
    For any $\alpha,\beta\in(-\infty,\frac{n}{2}+1)$,
    \[
    \begin{array}{lllll}
        &\displaystyle \int_{\sigma}^t\int_{\mathbb R^n}\frac{1}{(t-\tau)^\alpha}\exp\left(-\frac{\kappa(x-\xi)}{4(t-\tau)}\right)
        \cdot\frac{1}{(\tau-\sigma)^\beta}\exp\left(
            -\frac{\kappa(\xi-y)}{4(\tau-\sigma)}
        \right)\mathrm d \xi\mathrm d \tau\\
        =&\displaystyle \frac{(4\pi)^{\frac{n}{2}}}{(\prod_{i=1}^n\kappa_i)^{\frac{1}{2}}}B\left(\frac{n}{2}-\alpha+1,\frac{n}{2}-\beta+1\right) (t-\sigma)^{\frac{n}{2}+1-\alpha-\beta}\cdot \exp\left(-\frac{\kappa(x-y)}{4(t-\sigma)}\right),
    \end{array}
    \]
    where $B(\cdot,\cdot)$ denote the Beta function 
    \[
    B(a,b):=\int_0^1t^{a-1}(1-t)^{b-1}\mathrm dt.
    \]
  \end{lemma}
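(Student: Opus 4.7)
The plan is to compute the convolution directly in three stages: factor the spatial integral using the separable structure of $\kappa$, apply the one-dimensional Gaussian convolution identity coordinate by coordinate, and then recognize the remaining time integral as a Beta function. This parallels Lemma 3 in Chapter 1 of Friedman's book, but tracks the anisotropic weights $\kappa_i$ carefully.

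First I would exploit the fact that $\kappa(x-\xi)=\sum_{i=1}^n\kappa_i(x_i-\xi_i)^2$, so by Fubini (the integrand is nonnegative) the spatial integral splits as $\prod_{i=1}^n I_i(\tau)$, with
\[
I_i(\tau)=\int_{\mathbb R}\exp\!\left(-\frac{\kappa_i(x_i-\xi_i)^2}{4(t-\tau)}-\frac{\kappa_i(\xi_i-y_i)^2}{4(\tau-\sigma)}\right)\mathrm d\xi_i.
\]
Completing the square in $\xi_i$ (equivalently, rescaling by $\sqrt{\kappa_i}$ and using the classical heat semigroup identity) gives
\[
I_i(\tau)=\left(\frac{4\pi(t-\tau)(\tau-\sigma)}{\kappa_i(t-\sigma)}\right)^{\!1/2}\exp\!\left(-\frac{\kappa_i(x_i-y_i)^2}{4(t-\sigma)}\right).
\]
Taking the product over $i$ and pulling out the common $\tau$-dependent factors yields
\[
\prod_{i=1}^n I_i(\tau)=\frac{(4\pi)^{n/2}(t-\tau)^{n/2}(\tau-\sigma)^{n/2}}{(\prod_i\kappa_i)^{1/2}(t-\sigma)^{n/2}}\exp\!\left(-\frac{\kappa(x-y)}{4(t-\sigma)}\right).
\]

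Next I would multiply by $(t-\tau)^{-\alpha}(\tau-\sigma)^{-\beta}$ and integrate in $\tau$. The exponential factor and the denominator $(t-\sigma)^{n/2}$ are independent of $\tau$ and come out of the integral, leaving
\[
\int_{\sigma}^{t}(t-\tau)^{n/2-\alpha}(\tau-\sigma)^{n/2-\beta}\mathrm d\tau.
\]
Under the substitution $\tau=\sigma+(t-\sigma)s$, this equals $(t-\sigma)^{n-\alpha-\beta+1}B(n/2-\beta+1,n/2-\alpha+1)$. The hypothesis $\alpha,\beta\in(-\infty,n/2+1)$ is precisely what is required to ensure the Beta integral converges at both endpoints. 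Using the symmetry $B(a,b)=B(b,a)$ and collecting $(t-\sigma)^{n-\alpha-\beta+1-n/2}=(t-\sigma)^{n/2+1-\alpha-\beta}$ produces the claimed identity.

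There is no genuine obstacle here, as the argument consists of two standard moves (Gaussian convolution and Beta-function recognition). The main points requiring care are: verifying the convergence range for $\alpha,\beta$ which is exactly the stated hypothesis; correctly tracking how the $\kappa_i$ enter when rescaling each one-dimensional Gaussian, so that $(\prod_i\kappa_i)^{1/2}$ appears in the final denominator; and applying Fubini, which is justified by nonnegativity of the integrand.
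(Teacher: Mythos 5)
Your proof is correct and follows essentially the same route as the paper: both separate the spatial integral coordinate by coordinate, evaluate each one-dimensional Gaussian convolution (you invoke the heat semigroup identity; the paper writes out the equivalent explicit change of variable $z_i$ and the quadratic identity it satisfies), and then reduce the remaining time integral to a Beta function by the affine substitution $\tau=\sigma+(t-\sigma)s$. The only cosmetic difference is that you absorb the $\kappa_i$ by a rescaling inside each $I_i$, while the paper keeps $\kappa_i$ in the transformed Gaussian $\exp(-\kappa(z)/4)$ and integrates it out at the end; the resulting $(\prod_i\kappa_i)^{-1/2}$, the power of $(t-\sigma)$, and the convergence range $\alpha,\beta<\frac{n}{2}+1$ are handled identically.
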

\begin{proof}
    If $\kappa_i\equiv h>0$ for all $i=1,2,\cdots,n$, it is exactly the identity in \cite{Book-Friedman-PDE-ParaboForm}. Change of variable by setting 
    \[
    \widetilde x:=\left(\kappa_1^{\frac12}x_1,\cdots,\kappa_n^{\frac12}x_n\right),\quad 
    \widetilde\xi:=\left(\kappa_1^{\frac12}\xi_1,\cdots,\kappa_n^{\frac12}\xi_n\right)\quad\text{and}\quad 
    \widetilde y:=\left(\kappa_1^{\frac12}y_1,\cdots,\kappa_n^{\frac12}y_n\right).
    \]
    By a direct computation, 
    \[
    \kappa(x-\xi)=\sum_{i=1}^n\kappa_i|x_i-\xi_i|^2
    =\sum_{i=1}^n|\widetilde x_i-\widetilde\xi_i|^2=|\widetilde x-\widetilde\xi|^2.
    \]
    Similarly, $\kappa(\xi-y)=|\widetilde\xi-\widetilde y|^2$. Hence, by the identities above and Lemma 3 in \cite[Chapter 1]{Book-Friedman-PDE-ParaboForm},
    \[
        \begin{array}{lllll}
            &\displaystyle \int_{\sigma}^t\int_{\mathbb R^n}\frac{1}{(t-\tau)^\alpha}\exp\left(-\frac{\kappa(x-\xi)}{4(t-\tau)}\right)
            \cdot\frac{1}{(\tau-\sigma)^\beta}\exp\left(
                -\frac{\kappa(\xi-y)}{4(\tau-\sigma)}
            \right)\mathrm d \xi\mathrm d \tau\\
            =&\displaystyle \frac{1}{(\prod_{i=1}^n\kappa_i)^{\frac12}}\int_{\sigma}^t\int_{\mathbb R^n}\frac{1}{(t-\tau)^\alpha}\exp\left(-\frac{|\widetilde x-\widetilde\xi|^2}{4(t-\tau)}\right)
            \cdot\frac{1}{(\tau-\sigma)^\beta}\exp\left(
                -\frac{|\widetilde\xi-\widetilde y|^2}{4(\tau-\sigma)}
            \right)\mathrm d \widetilde\xi\mathrm d \tau\\
            =&\displaystyle \frac{(4\pi)^{\frac{n}{2}}}{(\prod_{i=1}^n\kappa_i)^{\frac{1}{2}}}B\left(\frac{n}{2}-\alpha+1,\frac{n}{2}-\beta+1\right) (t-\sigma)^{\frac{n}{2}+1-\alpha-\beta}\cdot \exp\left(-\frac{|\widetilde x-\widetilde y|^2}{4(t-\sigma)}\right).
        \end{array}
    \] 
    This finishes the proof of the desired identity.
\end{proof}

Secondly, we establish the following estimate for  the integral of the product of the  fundamental solution and  a positive function with compact support. 

\begin{lemma}\label{lem-Calculus-Convolution-CompactCase}
    Let $\kappa$ be as in Lemma \ref{lem-Calculus-Convolution} and $f\in C^0(\mathbb R^n\times[-T,0])$ be a positive function supported in $\overline{B_{S_0}}\times[-T,0]$. Then there exist $S_1>2S_0$ and $C>0$ such that for all $|x|\geq S_1$ and $t\in[-T,0]$,
    \[
    \begin{array}{lll}
        &\displaystyle\int_{-T}^t\int_{\mathbb R^n}\frac{1}{(t-\tau)^{\frac{n}{2}}}\exp\left(-\frac{\kappa(x-\xi)}{4(t-\tau)}\right)\cdot f(\xi,\tau)\mathrm d\xi\mathrm d\tau\\
        \leq & C \displaystyle  \frac{1}{(t+T)^{\frac{n}{2}-2}}\exp\left(-\frac{\kappa(x)}{4(t+T)}+\sum_{i=1}^n\frac{2\kappa_iS_0|x_i|}{4(t+T)}\right).
    \end{array}
    \]
    Especially, for any $\epsilon>0$, there exist $S_1'>S_1$ and $C>0$ such  that for all $|x|\geq S_1'$ and $t\in[-T,0]$,
    \[
        \int_{-T}^t\int_{\mathbb R^n}\frac{1}{(t-\tau)^{\frac{n}{2}}}\exp\left(-\frac{\kappa(x-\xi)}{4(t-\tau)}\right)\cdot f(\xi,\tau)\mathrm d\xi\mathrm d\tau
    \leq C\exp\left(-\frac{\kappa(x)-\epsilon|x|^2}{4(t+T)}\right).
    \]
\end{lemma}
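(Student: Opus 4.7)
My plan is to exploit the compact support of $f$ to reduce the double integral to a one-dimensional integral in time, then extract the Gaussian decay via a Laplace-type substitution and the standard asymptotic of the incomplete Gamma function. Since $f$ is positive and compactly supported in $\overline{B_{S_0}}\times[-T,0]$, I bound $f \leq \|f\|_\infty \chi_{B_{S_0}}$. For $\xi \in B_{S_0}$ the elementary inequality $(x_i-\xi_i)^2 \geq x_i^2 - 2S_0|x_i|$ (using $|\xi_i| \leq S_0$) yields
\[
\kappa(x-\xi) \geq c(x) := \kappa(x) - 2 S_0 \sum_{i=1}^n \kappa_i |x_i|,
\]
which satisfies $c(x) \geq \tfrac{1}{2}\kappa(x) \geq c_0|x|^2$ for $|x| \geq S_1$ with $S_1$ chosen sufficiently large. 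Since $c(x)$ is $\xi$-independent, the original double integral is then bounded by $C_0 \int_{-T}^t (t-\tau)^{-n/2} \exp(-c(x)/(4(t-\tau)))\, d\tau$.

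Substituting $v := c(x)/(4(t-\tau))$ rewrites this as $(c(x)/4)^{1-n/2} \int_a^\infty v^{n/2-2} e^{-v}\, dv$ with $a := c(x)/(4(t+T))$. For $|x| \geq S_1$ (enlarging $S_1$ if needed), $a \geq c_0 |x|^2/(4T) \geq a_0$ for any prescribed threshold $a_0$. The classical incomplete Gamma estimate $\int_a^\infty v^{s-1}e^{-v}\,dv \leq C a^{s-1}e^{-a}$ with $s = n/2-1$ (via integration by parts when $s>1$, direct comparison otherwise) then yields
\[
\int_{-T}^t (t-\tau)^{-n/2} \exp\Big(-\tfrac{c(x)}{4(t-\tau)}\Big)\, d\tau \leq \frac{C}{c(x)}\,(t+T)^{2-n/2}\exp\Big(-\tfrac{c(x)}{4(t+T)}\Big).
\]
Since $1/c(x)$ is bounded for $|x|\geq S_1$, unfolding $c(x)$ produces exactly the first claimed inequality.

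For the ``especially'' refinement, given $\epsilon > 0$, Young's inequality $2\kappa_i S_0 |x_i| \leq \tfrac{\epsilon}{2n} x_i^2 + C(\epsilon, \kappa_i, S_0)$ absorbs the linear term into $\tfrac{\epsilon}{2}|x|^2 + C(\epsilon, S_0, n)$; for $|x| \geq S_1'$ sufficiently large, the residual additive constant is dominated by $\tfrac{\epsilon}{4}|x|^2$ (after dividing by $4(t+T) \leq 4T$). It remains to absorb the polynomial prefactor $(t+T)^{2-n/2}$ into the exponential: setting $s = 1/(t+T) \in [1/T, \infty)$, a direct log-calculation shows $s^{n/2-2} \leq C\exp(\delta s |x|^2/4)$ uniformly for any fixed $\delta>0$ once $|x|$ is large (since $\delta s|x|^2/4$ grows linearly in $s$ while $(n/2-2)\log s$ grows at most logarithmically, and the bounded-$s$ regime is trivial). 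Choosing the cumulative loss below $\epsilon|x|^2$ delivers the stated uniform bound.

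The routine steps are the Laplace substitution and Young-type absorption. The delicate point is uniformity in $t \in [-T,0]$ as $t+T \to 0^+$, since for $n \geq 5$ the prefactor $(t+T)^{2-n/2}$ diverges; this is tamed by the Gaussian decay once $|x|$ is taken large, which is always possible by freely enlarging the thresholds $S_1, S_1'$ (depending on $\epsilon, T, S_0, \kappa$).
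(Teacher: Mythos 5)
Your proof is correct and takes essentially the same route as the paper: bound $\kappa(x-\xi)\geq\kappa(x)-2S_0\sum_i\kappa_i|x_i|$ on the support, reduce to a one-dimensional time integral, substitute to bring it to an incomplete Gamma integral whose argument $a=c(x)/(4(t+T))$ is pushed above any fixed threshold by enlarging $S_1$, and finally absorb the linear term and the polynomial prefactor into the Gaussian via Young's inequality. The only cosmetic difference is that you make the incomplete-Gamma estimate explicit where the paper simply writes ``integrating by parts implies''; the underlying calculation is identical.
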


\begin{proof}
    By the triangle inequality and the restriction $\xi\in\overline{B_{S_0}}$, we have 
    \[
    |x_i-\xi_i|^2=|x_i|^2-2x_i\cdot\xi_i+|\xi_i|^2\geq |x_i|^2-2S_0|x_i|.
    \]
    Consequently, for all $(x,t)\in\mathbb R^n\times[-T,0]$, 
    \[
    \begin{array}{lllll}
        & \displaystyle \int_{-T}^t\int_{\mathbb R^n}\frac{1}{(t-\tau)^{\frac{n}{2}}}\exp\left(-\frac{\kappa(x-\xi)}{4(t-\tau)}\right)\cdot f(\xi,\tau)\mathrm d\xi\mathrm d\tau\\
    \leq & \displaystyle \max_{\overline{B_{S_0}}\times[-T,0]}|f|\cdot 
    \int_{-T}^t\int_{B_{S_0}}\frac{1}{(t-\tau)^{\frac{n}{2}}}
    \exp\left(-\sum_{i=1}^n\frac{\kappa_i|x_i-\xi_i|^2}{4(t-\tau)}\right)\mathrm d \xi\mathrm d \tau\\
    \leq &\displaystyle \max_{\overline{B_{S_0}}\times[-T,0]}|f|\cdot 
    \int_{-T}^t\frac{1}{(t-\tau)^{\frac{n}{2}}} \int_{B_{S_0}}
    \exp\left(-\sum_{i=1}^n\frac{\kappa_i|x_i|^2-2\kappa_iS_0|x_i|}{4(t-\tau)}\right)\mathrm d \xi\mathrm d \tau\\
    \leq &\displaystyle  |B_{S_0}| \max_{\overline{B_{S_0}}\times[-T,0]}|f|\cdot 
    \int_{-T}^t\frac{1}{(t-\tau)^{\frac{n}{2}}}  
    \exp\left(-\sum_{i=1}^n\frac{\kappa_i|x_i|^2-2\kappa_iS_0|x_i|}{4(t-\tau)}\right) \mathrm d \tau.
    \end{array}
    \]
    We may pick $S_1>2S_0\displaystyle \max_{i,j=1,2,\cdots,n}\frac{\kappa_i}{\kappa_j}+1$ such that  
    \[
    \sum_{i=1}^n\left(\kappa_i|x_i|^2-2\kappa_iS_0|x_i|\right)
    \geq |x|^2\min_{i=1,2,\cdots,n}\kappa_i-2S_0|x|\max_{i=1,2,\cdots,n}\kappa_i>1,\quad\forall~|x|\geq \widetilde S_1.
    \] 
    Integrating by parts finite times implies that 
    there exist $C>0$  such that for all $|x|\geq S_1, t\in[-T,0]$,
    \[
    \begin{array}{lllll}
     & \displaystyle 
    \int_{-T}^t\frac{1}{(t-\tau)^{\frac{n}{2}}}  
    \exp\left(-\sum_{i=1}^n\frac{\kappa_i|x_i|^2-2\kappa_iS_0|x_i|}{4(t-\tau)}\right) \mathrm d \tau\\
    = &\displaystyle  \int^{t+T}_0\frac{1}{s^{\frac{n}{2}}}\exp\left(-\sum_{i=1}^n\frac{\kappa_i|x_i|^2-2\kappa_iS_0|x_i|}{4s}\right)\mathrm ds\\
    \leq &\displaystyle  C \frac{1}{(t+T)^{\frac{n}{2}-2}\cdot(\sum_{i=1}^n\kappa_i|x_i|^2-2\kappa_iS_0|x_i|)}\exp\left(-\sum_{i=1}^n\frac{\kappa_i|x_i|^2-2\kappa_iS_0|x_i|}{4(t+T)}\right).
    \end{array}
    \]
    By taking $S_1$ even larger such that 
    \[
    \frac{1}{(\sum_{i=1}^n\kappa_i|x_i|^2-2\kappa_iS_0|x_i|)}
    \leq \frac{2}{\sum_{i=1}^n\kappa_i|x_i|^2}=\frac{2}{\kappa(x)},\quad\forall~|x|\geq S_1.
    \]
    Combining the estimates above finishes the proof of the  first inequality. 

    To proceed, for any $\epsilon>0$, we may pick  $S_1' \geq \max\{S_1,\frac{4S_0}{\epsilon}\displaystyle \max_{i=1,2,\cdots,n}\kappa_i\}$ such that 
    \[
    \sum_{i=1}^n\left(\frac{\epsilon}{2}|x_i|^2-2\kappa_iS_0|x_i|\right)\geq 
    \frac\epsilon2|x|^2-2S_0|x|\max_{i=1,2,\cdots,n}\kappa_i> 0,\quad\forall~|x|>S_1'.
    \]
    Therefore, continuing the estimates above, there exist $C>0$ such that  for all $|x|\geq S_1', t\in[-T,0]$,
    \[
        \begin{array}{lllll}
            &\displaystyle \int_{-T}^t\int_{\mathbb R^n}\frac{1}{(t-\tau)^{\frac{n}{2}}}\exp\left(-\frac{\kappa(x-\xi)}{4(t-\tau)}\right)\cdot f(\xi,\tau)\mathrm d\xi\mathrm d\tau\\
        % \leq &\displaystyle C \frac{1}{(t+T)^{\frac{n}{2}-2}}\exp\left(-\sum_{i=1}^n\frac{\kappa_i|x_i|^2-2\kappa_iS_0|x_i|}{4(t+T)}\right)\\
        \leq &\displaystyle  C \frac{1}{(t+T)^{\frac{n}{2}-2}}\exp\left(-\sum_{i=1}^n\frac{(\kappa_i-\frac\epsilon2)|x_i|^2}{4(t+T)}\right)\\
        \leq  & \displaystyle  C \exp\left(- \frac{\kappa(x)-\epsilon|x|^2}{4(t+T)}\right).
        \end{array}
    \]
    In the last inequality of the estimate above, we used the fact that for all $|x|\geq S_1'$ and $t\in[-T,0]$,
    \[
    \frac{1}{(t+T)^{\frac{n}{2}-2}}\exp\left(-\frac{\epsilon|x|^2}{8(t+T)}\right)\leq 
    \frac{1}{(t+T)^{\frac{n}{2}-2}}\exp\left(-\frac{\epsilon(S_1')^2}{8(t+T)}\right)\leq C,
    \]
    for some positive constant $C$.
    This finishes the proof of the second inequality, and concludes the proof of this lemma.
\end{proof}

Now, we prove an intermediate convergence rate between \eqref{equ-AsymBehav-Rough-C0Est-E} and \eqref{equ-Result-AsymBehav-u}. 
\begin{lemma}\label{lem-IntermiConvSpeed-ED2E}
  Let $n,f,u,E$ be as in Theorem \ref{thm-Mainthm-Asym-InSummary}. Then there exists $\upsilon_0>0$ such that for every  $\upsilon_0^*\in (0,\upsilon_0)$,
  \begin{equation}\label{equ-ConverSpeed-ExponeRough}
  |E(x,t)|+|D^2E(x,t)| 
           \leq C\left(\exp\left(-\frac{\upsilon_0^*|x|^2}{4(t+T)}\right)\right), \quad \forall~|x|\geq S_0,~t\in(-T,0], 
    \end{equation}
    for some $C>0$ and $S_0>S.$
\end{lemma}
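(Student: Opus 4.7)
The strategy is to linearize \eqref{equ-LagFlow-Extended} about the reference polynomial $p(x,t):=\tau t+\tfrac12 x'Ax+b\cdot x+c$, represent $E=u-p$ via the fundamental solution of the resulting linear parabolic operator, and then exploit the compact support of $f$ through Lemma \ref{lem-Calculus-Convolution-CompactCase} to upgrade the polynomial decay of Theorem \ref{thm-main5-GeneralThm} into Gaussian decay.

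By the Newton-Leibniz formula applied to $F(M):=\sum_{i=1}^n\arctan\lambda_i(M)$, the function $E\in C^{2,1}_{loc}(\mathbb R^{n+1}_-)$ satisfies
\[
E_t-\sum_{i,j=1}^n a^{ij}(x,t)\,D_{ij}E=f(x,t)\quad\text{in }\mathbb R^{n+1}_-,
\]
with coefficients $a^{ij}(x,t):=\int_0^1 F_{M_{ij}}(A+\theta D^2E(x,t))\,d\theta$. Condition \eqref{equ-cond-HessianBdd} makes $(a^{ij})$ uniformly bounded and uniformly elliptic, and the interior Krylov-Safonov/Schauder estimates for $u$ already used in Lemma \ref{lem-AsymBehav-HessianMatrix} combine with the polynomial decay of $D^2E$ there to give a uniform $C^{\alpha,\alpha/2}(\mathbb R^{n+1}_-)$ bound on $a^{ij}$. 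Because $E\equiv 0$ on $\mathbb R^n\times(-\infty,-T]$ by \eqref{equ-temp-rigidityResult} and $f$ is supported in $B_S\times[-T,0]$, $E$ solves a linear Cauchy problem with zero data at $t=-T$ and source $f$.

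The classical theory of fundamental solutions for uniformly parabolic equations with H\"older coefficients (\cite{Book-Friedman-PDE-ParaboForm}) produces a kernel $\Gamma(x,t;\xi,\tau)$ satisfying the Gaussian bound
\[
|\Gamma(x,t;\xi,\tau)|\leq \frac{C}{(t-\tau)^{n/2}}\exp\left(-\frac{\kappa(x-\xi)}{4(t-\tau)}\right),\quad -T\leq\tau<t\leq 0,
\]
for some positive-definite quadratic form $\kappa(z)=\sum_{i=1}^n\kappa_i z_i^2$. Standard uniqueness for the linear Cauchy problem within polynomial-growth classes, together with the super-polynomial decay of $E$ from \eqref{equ-AsymBehav-Rough-C0Est-E}, identifies $E$ with the Duhamel convolution
\[
E(x,t)=\int_{-T}^t\!\int_{\mathbb R^n}\Gamma(x,t;\xi,\tau)\,f(\xi,\tau)\,d\xi\,d\tau.
\]
Plugging the Gaussian bound into this convolution and invoking Lemma \ref{lem-Calculus-Convolution-CompactCase} yields the desired bound $|E(x,t)|\leq C\exp\bigl(-\tfrac{\upsilon_0^\ast|x|^2}{4(t+T)}\bigr)$ for any $\upsilon_0^\ast$ strictly below the smallest $\kappa_i$ and $|x|\geq S_0$ large enough. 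The matching bound on $|D^2E|$ is then extracted by the rescaling scheme of Lemma \ref{lem-AsymBehav-HessianMatrix}: around a point $(y,t')$ with $|y|\gg 1$, rescale by $R\sim |y|/4$, apply interior Schauder estimates to the linearized equation satisfied by $E_R$, and convert back, absorbing the loss of $R^{-2}$ into a slight decrease of $\upsilon_0^\ast$.

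\textbf{Main obstacle.} The most delicate ingredient is the \emph{global} uniform H\"older control of the coefficients $a^{ij}$ needed to invoke the fundamental solution theory on all of $\mathbb R^n\times(-T,0]$; this is precisely where the bounded Hessian hypothesis \eqref{equ-cond-HessianBdd} is essential, and explains why the exponential rate is obtained only for classical solutions rather than for the viscosity solutions of Theorem \ref{thm-main5-GeneralThm}. A secondary subtlety is the identification of $E$ with the Duhamel convolution: one must rule out the existence of spurious large-$|x|$ solutions of the homogeneous linear Cauchy problem, which is guaranteed here by the already-established polynomial decay of $E$ placing it inside a uniqueness class for the linear operator.
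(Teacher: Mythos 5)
Your proposal follows essentially the same path as the paper's proof: Newton--Leibniz linearization of the equation for $E$, a Gaussian bound on the fundamental solution of the resulting uniformly parabolic operator with H\"older coefficients (furnished by Lemma \ref{lem-AsymBehav-HessianMatrix} and condition \eqref{equ-cond-HessianBdd}), identification of $E$ with the Duhamel convolution using the zero initial data and the decay from \eqref{equ-AsymBehav-Rough-C0Est-E}, Lemma \ref{lem-Calculus-Convolution-CompactCase} to produce the Gaussian rate, and the rescaling/interior-Schauder scheme of Lemma \ref{lem-AsymBehav-HessianMatrix} to transfer the bound to $D^2E$. The argument, the key lemmas invoked, and the role singled out for the bounded-Hessian hypothesis all match the paper's.
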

\begin{proof}
  By the Newton-Leibnitz formula, $E(x,t)$ satisfies  
  \[
  \left\{
  \begin{array}{llllll}
  \displaystyle L[E](x,t):=
  \sum_{i,j=1}^na_{ij}(x,t)D_{ij}E(x,t)-E_t(x,t)=-f(x,t), & \text{in }\mathbb R^{n}\times[-T,0],\\
  E(x,-T)=0, & \text{in }\mathbb R^n,\\
  E(x,t)=O(|x|^{-\zeta}), & \text{as }|x|\rightarrow\infty,~\forall~t\in(-T,0],
  \end{array}
  \right.
  \]
  where the coefficients 
  \[
  a_{ij}(x,t):=\int_0^1D_{M_{ij}}F(A+\theta D^2E(x,t))\mathrm d \theta,
  \]
  have bounded $C^{\alpha,\frac{\alpha}{2}}(\mathbb R^{n+1}_-)$ norm (see Lemma \ref{lem-AsymBehav-HessianMatrix}), with a limit
  \[
  a_{ij}(\infty):=\lim_{-t+|x|^2\rightarrow\infty}a_{ij}(x,t)=D_{M_{ij}}F(A).
  \]
By Theorems 10-12 in \cite[Chapter 1]{Book-Friedman-PDE-ParaboForm}, we have a fundamental solution $\Gamma(x,t;y,\tau)$ associated to $L$, with the following estimates. For any $\upsilon_0^*<\upsilon_0$, there exists $C>0$ such that 
  \[
  |\Gamma(x,t;y,\tau)|\leq C\dfrac{1}{(t-\tau)^{\frac{n}{2}}}\exp\left(-\frac{\frac{\upsilon_0^*+\upsilon_0}{2}|x-y|^2}{4(t-\tau)}\right),\quad\forall~x,y\in\mathbb R^n,~-T\leq \tau<t\leq 0,
  \]
  where $\upsilon_0$ is the positive constant such that 
  \[
  \upsilon_0|\xi|^2\leq\sum_{i,j=1}^na^{ij}(x,t)\xi_i\xi_j\leq C|\xi|^2,\quad\forall~(x,t)\in\mathbb R^{n}\times[-T,0],~\xi\in\mathbb R^n,
  \]
  and $a^{ij}(x,t)$ denote the $i,j$-position of the inverse matrix of $(a_{ij}(x,t))_{n\times n}$. Therefore, by the initial value and the asymptotic behavior at infinity, comparison principle implies that 
  \[
  E(x,t)=-\int_{-T}^t\int_{\mathbb R^n}\Gamma(x,t;y,\tau)f(y,\tau)\mathrm dy\mathrm d\tau,\quad \forall~(x,t)\in\mathbb R^n\times[-T,0].
  \]
  Consequently, by Lemma \ref{lem-Calculus-Convolution-CompactCase}, there exists $S_0>S$ and $C>0$ such that for all $|x|\geq S_0$ and $t\in[-T,0]$, 
  \[
  \begin{array}{llllll}
   | E(x,t) | &\leq & \displaystyle C\int_{-T}^t\int_{\mathbb R^n}\frac{1}{(t-\tau)^{\frac{n}{2}}}\exp\left(-\frac{\frac{\upsilon_0^*+\upsilon_0}{2}|x-y|^2}{4(t-\tau)}\right)|f(y,\tau)|\mathrm dy\mathrm d\tau\\
   &\leq & \displaystyle 
   C \exp\left(-\frac{\frac{3\upsilon_0^*+\upsilon_0}{4}|x|^2}{4(t+T)}\right).
  \end{array}
  \] 
  This finishes the proof of the estimate on  $E$ at infinity. 
  
  The convergence rate of Hessian matrix $D^2E$ follows from a similar argument as in Lemma \ref{lem-AsymBehav-HessianMatrix}. More explicitly, for $E_R(z,\zeta)$ and $u_R(z,\zeta)$ defined as in Lemma \ref{lem-AsymBehav-HessianMatrix}, we restrict the domain $(z,\zeta)$ into 
  \[
  D':=\left\{
    (z,\zeta)~:~\left(|\zeta|+\frac{1}{2}|z|^2\right)^{\frac{1}{2}}\leq \epsilon\quad\text{and}\quad \zeta\leq -\frac{16\tau}{R^2} 
  \right\},
  \]
  for some $\epsilon>0$ to be determined and $\tau\in[-T,0]$. Then for large $R$, the estimate on $E$ implies that 
  \[
  \begin{array}{llll}
  |E_R(z,\zeta)|&\leq &\displaystyle  CR^{-2}\exp\left(-\frac{\frac{7\upsilon_0^*+\upsilon_0}{8}(1-\frac{\epsilon \sqrt 2}{4})(R^2-T)}{4(t+T)}\right)\\
  &\leq & \displaystyle CR^{-2}\exp\left(-\frac{\frac{15\upsilon_0^*+\upsilon_0}{16}(1-\frac{\epsilon\sqrt 2}{4})R^2}{4(t+T)}\right)\\
  &\leq & \displaystyle CR^{-2}\exp\left(-\frac{\upsilon_0^* R^2}{4(t+T)}\right),
  \end{array}
  \]
  by fixing $\epsilon>0$ sufficiently small. Consequently, by the interior Schauder estimate, $E_R$ satisfies 
  \[
  |D^2_zE_R(0,0)|\leq C||E_R||_{C^0(\overline D')}\leq CR^{-2}\exp\left(
    -\frac{\upsilon_0^* R^2}{4(t+T)}
  \right).
  \]
  It follows that for all $(y,\tau)\in\mathbb R^{n+1}_-$ with $|y|\geq S_0$ sufficiently large and  $\tau\in[-T,0]$, 
  \[
  |D^2E(y,\tau)|=|D^2_zE_R(0,0)|\leq CR^{-2}(y,\tau)\exp\left(-\frac{\upsilon_0^* R^2(y,\tau)}{4(t+T)}\right)
  \leq C\exp\left(-\frac{\upsilon_0^* R^2(y,\tau)}{4(t+T)}\right).
  \]
  This finishes the proof of the desired estimates on $|E(x,t)|$ and $|D^2E(x,t)|$. 
\end{proof}

Using the exponential convergence rate established in Lemma \ref{lem-IntermiConvSpeed-ED2E}, we devise an iteration scheme to refine the rate. 
By the estimates on $|D^2E(x,t)|$, the coefficients of linearized equation of  $E$ satisfies 
\[
|a_{ij}(x,t)-a_{ij}(\infty)|\leq C|D^2E(x,t)|,\quad\forall~(x,t)\in\mathbb R^n\times[-T,0],
\]
with 
\[
(a_{ij}(\infty))_{n\times n}=(I+A^2)^{-1}=\mathrm{diag}\left(\frac{1}{1+a_1^2},\frac{1}{1+a_2^2},\cdots,\frac{1}{1+a_n^2}\right).
\]
Correspondingly, 
\[
(a^{ij}(\infty))_{n\times n}=(I+A^2)=\mathrm{diag}(1+a_1^2,1+a_2^2,\cdots,1+a_n^2).
\]

\begin{lemma}\label{lem-IterationScheme-Exponential}
    Let $n,f,u,E$ be as in Theorem \ref{thm-Mainthm-Asym-InSummary}. Suppose $E$ satisfies asymptotic behavior \eqref{equ-ConverSpeed-ExponeRough} for some $\upsilon_0^*, C, S_0>0$. 
    For any $\varepsilon>0$, let 
    \[
    h_i^*:=h_i-\varepsilon\leq h_i:=\min\{1+a_i^2,2\upsilon_0^*\}.
    \]
    Then there exist $S_1'>S_0$ and $C>0$ such that  
    \[
        |E(x,t)|+|D^2E(x,t)|  
               \leq   C\left(\exp\left(-\sum_{i=1}^n
               \frac{h_i^*|x_i|^2}{4(t+T)}
               \right)\right),\quad  \forall~|x|\geq S_1',~t\in(-T,0]. 
    \]
\end{lemma}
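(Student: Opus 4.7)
The plan is to use Duhamel's formula with the constant-coefficient limit operator $L_\infty := -\partial_t + \sum_{i,j} a_{ij}(\infty)D_{ij}$, whose explicit anisotropic heat kernel
\[
\Gamma_\infty(x,t;y,\tau) = \frac{\prod_{i=1}^n\sqrt{1+a_i^2}}{(4\pi(t-\tau))^{n/2}} \exp\left(-\sum_{i=1}^n \frac{(1+a_i^2)(x_i-y_i)^2}{4(t-\tau)}\right)
\]
carries the natural exponential weights $\kappa_i = 1+a_i^2$ in direction $i$. Using the identity $L_\infty[E] - L[E] = \sum_{i,j}(a_{ij}(\infty)-a_{ij})D_{ij}E$ together with $L[E]=-f$ and $E(\cdot,-T)=0$, the problem rewrites as $L_\infty[E] = -\tilde f$ with $\tilde f := f + \sum_{i,j}(a_{ij}-a_{ij}(\infty))D_{ij}E$, and Duhamel gives
\[
E(x,t) = -\int_{-T}^t\int_{\mathbb R^n} \Gamma_\infty(x,t;y,\tau)\,\tilde f(y,\tau)\,dy\,d\tau.
\]

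The source $\tilde f$ is estimated via the inductive hypothesis. Since $a_{ij}(y,\tau)$ is obtained by averaging $D_{M_{ij}}F$ along $A+\theta D^2E(y,\tau)$, one has $|a_{ij}(y,\tau)-a_{ij}(\infty)| \leq C|D^2E(y,\tau)|$; combined with \eqref{equ-ConverSpeed-ExponeRough} this yields
\[
|\tilde f(y,\tau)| \leq |f(y,\tau)| + C|D^2E(y,\tau)|^2 \leq |f(y,\tau)| + C\exp\left(-\sum_{i=1}^n\frac{2\upsilon_0^*\, y_i^2}{4(\tau+T)}\right)
\]
for $|y|\geq S_0$, with uniform boundedness on $|y|\leq S_0$ (which is split off as an additional compact-support term and handled by Lemma \ref{lem-Calculus-Convolution-CompactCase}). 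The contribution of $f$ itself gives an exponent $-\sum(1+a_i^2-\epsilon)x_i^2/4(t+T)$ via Lemma \ref{lem-Calculus-Convolution-CompactCase} with $\kappa_i = 1+a_i^2$, which is stronger than the target $-\sum h_i^* x_i^2/4(t+T)$.

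The essential new ingredient is the anisotropic Gaussian convolution for the Gaussian-decaying remainder: with kernel weights $\kappa_i=1+a_i^2$ meeting source weights $\mu_i=2\upsilon_0^*$, completing the square coordinate-wise gives
\[
\int_{\mathbb R^n}\exp\!\left(-\sum_i\tfrac{\kappa_i(x_i-y_i)^2}{4(t-\tau)}-\sum_i\tfrac{\mu_i y_i^2}{4(\tau+T)}\right)dy = \mathrm{pref}(\tau)\cdot\exp\!\left(-\sum_i\tfrac{\kappa_i\mu_i x_i^2}{4(\kappa_i(\tau+T)+\mu_i(t-\tau))}\right).
\]
Substituting $s = (\tau+T)/(t+T)\in[0,1]$, the denominator equals $(t+T)(\kappa_i s + \mu_i(1-s)) \leq (t+T)\max\{\kappa_i,\mu_i\}$, so
\[
\frac{\kappa_i\mu_i}{\kappa_i(\tau+T)+\mu_i(t-\tau)} \geq \frac{\min\{\kappa_i,\mu_i\}}{t+T} = \frac{h_i}{t+T}\quad\text{uniformly in }\tau\in[-T,t].
\]
Pulling $\exp(-\sum h_i x_i^2/4(t+T))$ outside reduces the remainder estimate to bounding $\int_{-T}^t \mathrm{pref}(\tau)\,d\tau$, which after the $s$ substitution is a finite constant times $t+T$. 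The resulting polynomial-in-$(t+T)$ prefactor, together with the linear-in-$|x|$ correction from Lemma \ref{lem-Calculus-Convolution-CompactCase}, is absorbed into a factor $\exp(\epsilon|x|^2/4(t+T))$ for $|x|$ large via the elementary inequality $s^{-k}e^{-a/s} \leq (k/a)^k e^{-k}$, producing the exponent $h_i^* = h_i - \epsilon$. The corresponding bound on $|D^2E|$ then follows by the same rescaling plus interior Schauder argument used at the end of Lemma \ref{lem-IntermiConvSpeed-ED2E}, applied to the blow-down $E_R$ around a point $(y,t')$ with $\mathcal R(y,t')$ large.

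The main obstacle is the anisotropic convolution analysis: verifying that the per-coordinate minimum $\min\{\kappa_i,\mu_i\}$ emerges in the exponent uniformly over $\tau\in[-T,t]$, and that the resulting $\tau$-integral converges with a prefactor growing only polynomially in $t+T$. A secondary subtlety is the treatment of the non-decaying region $|y|\leq S_0$ in $\tilde f$, which must be split off and handled as an additional compact-source contribution via Lemma \ref{lem-Calculus-Convolution-CompactCase}; the global Hessian bound \eqref{equ-cond-HessianBdd} is what allows this splitting.
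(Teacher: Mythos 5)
Your proposal is correct and follows essentially the same route as the paper: rewrite the linearized equation with constant-coefficient principal part $\sum D_{M_{ij}}F(A)D_{ij}$ and a perturbed source $g=f-\sum(a_{ij}-a_{ij}(\infty))D_{ij}E$, represent $E$ by Duhamel with the explicit anisotropic heat kernel, estimate the compact-support piece via Lemma \ref{lem-Calculus-Convolution-CompactCase} and the Gaussian-decaying piece via the convolution calculus, then recover the $|D^2E|$ bound by the rescaling/interior-Schauder argument from Lemma \ref{lem-IntermiConvSpeed-ED2E}. The one small variation: where you explicitly complete the square coordinate-wise for mismatched weights $\kappa_i=1+a_i^2$ and $\mu_i=2\upsilon_0^*$ and show the effective exponent coefficient is $\geq\min\{\kappa_i,\mu_i\}/(t+T)$, the paper first replaces both weights pointwise by $h_i=\min\{1+a_i^2,2\upsilon_0^*\}$ (which only enlarges both Gaussians) and then applies the matching-weight identity of Lemma \ref{lem-Calculus-Convolution}; the two computations produce the same bound.
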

\begin{proof}
    From the asymptotic behavior of $D^2E$ at infinity and the uniform ellipticity of equation,  there exists $C>0$ such that 
    \[
    \left|
    \sum_{i,j=1}^n
    (a_{ij}(x,t)-a_{ij}(\infty))\cdot D_{ij}E(x,t)
    \right|
    \leq C\exp\left(-\frac{2\upsilon_0^*|x|^2}{4(t+T)}\right),\quad\forall~|x|\geq S_0,~t\in [-T,0].
    \]
    Therefore, we may rewrite the linearized equation satisfied by $E$ into 
    \[
    \sum_{i,j=1}^nD_{M_{ij}}F(A)D_{ij}E(x,t)-E_t=-g(x,t)\quad\text{in }\mathbb R^n\times[-T,0],
    \]
    with 
    \[
    -g(x,t):=-f(x,t)+
        \sum_{i,j=1}^n\left(a_{ij}(\infty)-a_{ij}(x,t)\right)D_{ij}E(x,t).
    \]
    Furthermore, we estimate $|g(x,t)|$ by 
    \[
    |g(x,t)|\leq C\exp\left(-\frac{2\upsilon_0^*|x|^2}{4(t+T)}\right)+g_0(x,t),\quad\forall~(x,t)\in\mathbb R^n\times [-T,0]
    \]
    for some smooth, non-negative function $g_0(x,t)$ having compact support in $\overline{B_{S_0}}\times[-T,0]$. 
    Then, by the fundamental solution to linear parabolic equation with constant coefficients (see for instance \cite[Page 4, Chapter 1]{Book-Friedman-PDE-ParaboForm}) and the comparison principle, 
    \[
    E(x,t)=\int_{-T}^t\int_{\mathbb R^n}\frac{1}{(4\pi)^{\frac{n}{2}}}
    \frac{1}{(t-\tau)^{\frac{n}{2}}}\exp\left(
        -\sum_{i=1}^n\frac{(1+a_i^2)|x_i-y_i|^2}{4(t-\tau)}
    \right)g(y,\tau)\mathrm dy\mathrm d\tau.
    \] 
    By Lemmas \ref{lem-Calculus-Convolution} and \ref{lem-Calculus-Convolution-CompactCase}, there exist $S_1'>S_0$ and positive constants $C>0$  such that 
    \[
    \begin{array}{llll}
        & |E(x,t)|\\
        \leq & \displaystyle C\left|
            \displaystyle \int_{-T}^t\int_{\mathbb R^n} 
            \frac{1}{(t-\tau)^{\frac{n}{2}}}\exp\left(
                -\sum_{i=1}^n\frac{(1+a_i^2)|x_i-y_i|^2}{4(t-\tau)}
            \right)\exp\left(-\frac{2\upsilon_0^*|y|^2}{4(\tau+T)}\right)\mathrm dy\mathrm d\tau
        \right|\\
    &\displaystyle \quad +\int_{-T}^t\int_{B_{S_0}}
    \frac{1}{(t-\tau)^{\frac{n}{2}}}\exp\left(
                -\sum_{i=1}^n\frac{(1+a_i^2)|x_i-y_i|^2}{4(t-\tau)}
            \right)\cdot g_0(y,\tau) \mathrm dy\mathrm d\tau\\
    \leq & \displaystyle  C\left|
        \int_{-T}^t\int_{\mathbb R^n} 
        \frac{1}{(t-\tau)^{\frac{n}{2}}}\exp\left(
            -\sum_{i=1}^n\frac{h_i|x_i-y_i|^2}{4(t-\tau)}
        \right)\exp\left(-\sum_{i=1}^n\frac{h_i|y_i|^2}{4(\tau+T)}\right)\mathrm dy\mathrm d\tau
    \right|\\
    &\displaystyle\quad 
    + C\frac{1}{(t+T)^{\frac{n}{2}-2}}\exp\left(-\sum_{i=1}^n\frac{(1+a_i^2-\frac\varepsilon2)|x|^2}{4(t+T)}\right)
    \\
    \leq &\displaystyle  C \exp\left(-\sum_{i=1}^n\frac{h_i^*|x|^2}{4(t+T)}\right),\quad\forall~|x|\geq S_1',~t\in[-T,0].
    \end{array}
    \]
    The estimate of $|D^2E(x,t)|$ follows similarly as in the proof of Lemma \ref{lem-IntermiConvSpeed-ED2E}.
    This finishes the proof of the desired estimates. 
\end{proof} 
  
\begin{proof}[Proof of Theorem \ref{thm-Mainthm-Asym-InSummary}]
    Combining Lemmas \ref{lem-AsymBehav-HessianMatrix}, \ref{lem-IntermiConvSpeed-ED2E}, we may apply Lemma \ref{lem-IterationScheme-Exponential} repeatedly finite times. More explicitly, for any   $\upsilon_0^*\in (0,\upsilon_0)$ such that $E$ satisfies the desired estimates in \eqref{equ-ConverSpeed-ExponeRough},   there exists $k\geq 1$ such that 
    \[
        2^{k}\upsilon_0^*\geq 1+a_i^2,\quad\forall~i=1,2,\cdots,n.
    \]
    Applying Lemma \ref{lem-IterationScheme-Exponential} $k$ times, and picking $h_i^*$ sufficiently close to $h_i$, we find  $\mathfrak h_i<1+a_i^2<2\mathfrak h_i$  such that  
    \[
    |E(x,t)|+|D^2E(x,t)| 
               \leq   C\left(\exp\left(-\sum_{i=1}^n
               \frac{\mathfrak h_i|x_i|^2}{4(t+T)}
               \right)\right),\quad \forall~|x|\geq S_k,~t\in(-T,0],
    \]
    for some positive constants $S_k>0$ and $C>0$.
    From the proof of Lemma \ref{lem-IterationScheme-Exponential}, we similarly obtain that there exist a non-negative, smooth function $g_k$ with compact support in $\overline{B_{S_k}}\times[-T,0]$, $S_{k+1}>S_k'>S_k$ and $C>0$ such that
    \[
    \begin{array}{lll}
       & |E(x,t)|\\
       \leq &\displaystyle C\int_{-T}^t\int_{\mathbb R^n} 
        \frac{1}{(t-\tau)^{\frac{n}{2}}}\exp\left(
            -\sum_{i=1}^n\frac{(1+a_i^2)|x_i-y_i|^2}{4(t-\tau)}
        \right)\exp\left(-\sum_{i=1}^n\frac{(1+a_i^2)|y_i|^2}{4(+T)}\right)\mathrm dy\mathrm d\tau\\
        &\displaystyle \quad +\max_{\overline{B_{S_k}}\times[-T,0]}|g_k(x,t)|\cdot \int_{B_{S_k+1}}\int_{-T}^t\frac{1}{(t-\tau)^{\frac{n}{2}}}
        \exp\left(-\sum_{i=1}^n\frac{(1+a_i^2)|x_i-y_i|^2}{4(t-\tau)}\right)\mathrm d\tau\mathrm dy\\
        \leq &\displaystyle
         C\exp\left(-\sum_{i=1}^n\frac{(1+a_i^2)|x_i|^2}{4(t+T)}\right)\\
         &\displaystyle \quad 
         +C\frac{1}{(t+T)^{\frac{n}{2}-2}\cdot(\sum_{i=1}^n(1+a_i^2)|x_i|^2)}\exp\left(-\sum_{i=1}^n\frac{(1+a_i^2)|x|^2-2(1+a_i^2)S_k'|x_i|}{4(t+T)}\right),
    \end{array}
    \]
    for sufficiently large $|x|$ and $t\in[-T,0]$,
    where we used Lemmas \ref{lem-Calculus-Convolution} and \ref{lem-Calculus-Convolution-CompactCase} in the last line of the estimate above. By a direct computation, there exists $C>0$ such that
    \[
    (t+T)^{\frac{n}{2}-2}(\sum_{i=1}^n(1+a_i^2)|x_i|)\exp\left(-\sum_{i=1}^n\frac{2(1+a_i^2)S_k'|x_i|}{4(t+T)}\right)\leq C,
    \]
     for sufficiently large $|x|$ and $t\in [-T,0]$. The estimates above  finish the proof of \eqref{equ-Result-AsymBehav-u}.
\end{proof}

\begin{proof}[Proof of Theorem \ref{thm-Main1-AsymBehav}]
    Under the conditions as in  \eqref{cond-MainThm-1}, the desired results follow immediately from Theorems \ref{thm-main5-GeneralThm} and  \ref{thm-Mainthm-Asym-InSummary}. 

    Under the conditions as in \eqref{cond-MainThm-2}, we apply the same interior estimates from \cite{Bhattacharya-Warren-Weser-Liouville-LagranFlow-CPDE} to prove $D^2u$ is  bounded on $\mathbb R^{n+1}_-$, then  Theorem \ref{thm-Mainthm-Asym-InSummary} implies  the desired results. More explicitly, for any $(x_0,t_0)\in\mathbb R^{n+1}_-$ with $|x_0|$ and $R$ sufficiently large, we consider 
    \[
    u_R(x,t):=\frac{1}{R^2}u(R(x+x_0),R^2(t+t_0)),\quad\forall~(x,t)\in B_{6\sqrt n+1}\times\left[-\frac{1}{n},0\right].
    \]
    Notice that for all $(x,t)\in B_{6\sqrt n+1}\times\left[-\frac{1}{n},0\right]$, 
    \[
        R(x+x_0)\in B_{R(6\sqrt{n}+1)}(Rx_0),\quad R^2(t+t_0)\in \left[-\frac{R^2}{n}+t_0R^2,t_0R^2\right].
    \]
    Since $\mathrm{supp}(f)\subset B_S\times[-T,0]$, it follows that for all $|x_0|>6\sqrt{n}+1$, we may pick sufficiently large $R_1$ such that for all $R\geq R_1$, $(R(x+x_0),R^2(t+t_0))\in\mathbb R^{n+1}_-\setminus \mathrm{supp}(f)$. Especially, by triangle inequality,
    \[
    \sup_{x\in B_{6\sqrt n+1}}\left|u_R\left(x,-\frac{1}{n}\right)\right|
    \leq \frac{1}{R^2}\sup_{z\in B_{(6\sqrt n+1)(R+|x_0|)}}\left|
        u\left(z,-\frac{R^2}{n}+t_0R^2\right)
    \right|.
    \]
    Using the growth condition \eqref{equ-cond-Growth-BhattaWarrenWeser}, for any $\epsilon_0<1$ sufficiently small, we may choose $R_2>R_1$ such that for all $R\geq R_2$, 
    \[
    \begin{array}{llllll}
        \displaystyle \sup_{z\in B_{(6\sqrt n+1)(R+|x_0|)}}\left|
            u\left(z,-\frac{R^2}{n}+t_0R^2\right)
        \right| &\leq & \displaystyle \frac{1}{(6\sqrt n+2-\epsilon_0)^2}\left((6\sqrt n+1)^2(R+|x_0|)^2+R_0\right)\\
        &\leq & \displaystyle \frac{R^2}{1+\epsilon_1}\left(
            \left(1+\frac{|x_0|}{R}\right)^2+\frac{R_0}{R^2}
        \right),
    \end{array}
    \]
    for some $\epsilon_1>0$ independent of $R$. Therefore, choosing $R_2$ even lager, it proves 
    \[
    \sup_{x\in B_{6\sqrt n+1}}\left|u_R\left(x,-\frac{1}{n}\right)\right|\leq 1,\quad\forall~R\geq R_2.
    \]
    Applying Proposition 4.2 in \cite{Bhattacharya-Warren-Weser-Liouville-LagranFlow-CPDE}, since $u$ is a convex, classical solution to \eqref{equ-LagFlow-Extended}, it proves 
    \[
    D^2u(Rx_0,R^2t_0)=D^2u_R(0,0)\leq C(n),\quad\forall~|x_0|>\max\{6\sqrt n+1,S\},~t_0\in(-\infty,0].
    \]
    Therefore, together with the assumption that $u\in C^{2,1}(\mathbb R^{n+1}_- )$, we conclude that $D^2u$ is bounded in $\mathbb R^{n+1}_-.$ This finishes the proof of Theorem \ref{thm-Main1-AsymBehav}.
\end{proof}

At the end, we provide the optimality of the convergence rate \eqref{equ-Result-AsymBehav-u} in the following sense. 
\begin{proof}[Proof of Remark \ref{Rem-1.3}]
    Since $u$ is strictly convex with respect to $x$,  $F(D^2u)=\sum_{i=1}^n\arctan\lambda_i(D^2u)$ becomes a concave operator \cite{Caffarelli-Nirenberg-Spruck-DirichletIII}. 
    As in Section \ref{seclabel-Sec-ExistenceSol}, we may assume without loss of generality that $A=\mathrm{diag}(a_1,a_2,\cdots,a_n)$ is a diagonal matrix.
    Therefore,  $u$ satisfies 
    \[
    \begin{array}{llll}
    u_t &=&\displaystyle \sum_{i=1}^n\arctan\lambda_i(D^2u)+f_\alpha(x,t)\\
    &
    \leq&\displaystyle \sum_{i}^n\arctan\lambda_i(A)+
    \sum_{i=1}^n\frac{1}{1+a_i^2}(D_{ii}u-a_i)+f_\alpha(x,t),\quad\forall~(x,t)\in\mathbb R^n\times(-T,0],
    \end{array}
    \]
    and $E$ satisfies 
    \[
    E_t=u_t-\tau\leq \sum_{i=1}^n\frac{1}{1+a_i^2}D_{ii}E(x,t)+f_\alpha(x,t),\quad\forall~(x,t)\in\mathbb R^n\times(-T,0].
    \] 
    Since $E(x,-T)= 0$ in $\mathbb R^n$, $E(x,t)=o(1)$ as $-t+|x|^2\rightarrow\infty$, it follows from the fundamental solution (see for instance \cite{Book-Friedman-PDE-ParaboForm}) and the comparison principle that 
    \[
    E(x,t)\leq \int_{-T}^t\int_{\mathbb R^n}\frac{1}{(4\pi)^{\frac{n}{2}}}\frac{1}{(t-\tau)^{\frac{n}{2}}}\exp\left(-\sum_{i=1}^n\frac{(1+a_i^2)|x_i-y_i|^2}{4(t-\tau)}\right)f_\alpha(y,\tau)\mathrm dy\mathrm d\tau<0,
    \]
    for all $(x,t)\in\mathbb R^n\times(-T,0].$ 
    For any $y\in\overline{B_S}$,
    \[
    |x_i-y_i|^2=|x_i^2|-2x_iy_i+|y_i|^2
    \leq |x_i|^2+2S|x_i|+S^2.
    \]
    Consequently, using the assumption that $f_\alpha(x,t)\leq -(t+T)^\alpha$ in $B_{\frac{S}{2}}\times[-T,0]$, it follows that
    \[
    \begin{array}{lll}
     E(x,t)& \leq & \displaystyle \int_{-T}^t\int_{B_{\frac{S}{2}}}\frac{1}{(4\pi)^{\frac{n}{2}}}\frac{1}{(t-\tau)^{\frac{n}{2}}}\exp\left(-\sum_{i=1}^n\frac{(1+a_i^2)|x_i-y_i|^2}{4(t-\tau)}\right)f_\alpha(y,\tau)\mathrm dy\mathrm d\tau\\
     &\leq &\displaystyle  -\frac{1}{(4\pi)^{\frac{n}{2}}}
     \int_{0}^{t+T}\int_{B_{\frac{S}{2}}}
     \dfrac{1}{s^{\frac{n}{2}}}\exp\left(
        -\sum_{i=1}^n\frac{(1+a_i^2)\left(|x_i|^2+2S|x_i|+S^2\right)}{4s}
     \right)(t+T-s)^\alpha\mathrm dy \mathrm ds\\
     &\leq &\displaystyle  -C\int_0^{t+T}\frac{1}{s^{\frac{n}{2}}}\exp\left(
        -\sum_{i=1}^n\frac{(1+a_i^2)\left(|x_i|^2+2S|x_i|+S^2\right)}{4s}
     \right)(t+T-s)^\alpha\mathrm ds,
    \end{array}
    \]
    where $C>0$ is a positive constant relying only on $n$ and $S$. For sufficiently large $|x|$, we have a positive constant $C>1$ such that 
    \[
    C^{-1}x'(I+A^2)x\leq \mathfrak D:=\sum_{i=1}^n\frac{(1+a_i^2)\left(|x_i|^2+2S|x_i|+S^2\right)}{4}\leq Cx'(I+A^2)x.
    \]
    For any $0<\beta<1$ and $n\geq 4$ taking integration by part once,  we have 
    \[
    \begin{array}{lllll}
        |E(x,t)| & \geq &\displaystyle C\int_0^{(1-\beta)(t+T)}\frac{1}{s^{\frac{n}{2}}}\exp\left(
        -\frac{\mathfrak D}{s}
     \right)(t+T-s)^\alpha\mathrm ds\\
     &\geq &\displaystyle C\beta^\alpha(t+T)^\alpha\int_0^{(1-\beta)(t+T)}\frac{1}{s^{\frac{n}{2}}}\exp\left(
        -\frac{\mathfrak D}{s}
     \right)\mathrm ds\\
     &\geq &\displaystyle C\frac{\beta^\alpha}{(1-\beta)^{\frac{n}{2}-2}}\frac{1}{(t+T)^{\frac{n}{2}-2-\alpha}\mathfrak D}\exp\left(-\frac{\mathfrak D}{(1-\beta)(t+T)}\right)\\
     &&\displaystyle +C\beta^\alpha  \left(\frac{n}{2}-2\right)\frac{(t+T)^\alpha}{\mathfrak D}\int_0^{(1-\beta)(t+T)}
    s^{1-\frac{n}{2}}\exp\left(-\frac{\mathfrak D}{s}\right)\mathrm d s\\
     &\geq & \displaystyle C\frac{1}{(t+T)^{\frac{n}{2}-2-\alpha}x'(I+A^2)x}\exp\left(
        -\frac{\mathfrak D}{(1-\beta)(t+T)}
     \right),
    \end{array}
    \]
    where $C$ denotes positive constants rely on $n,S,\alpha$ and $\beta$. 
    When $n=2,3$, we take integration by part twice to obtain
    \[
    \begin{array}{lllll}
        |E(x,t)| &\geq & \displaystyle C \frac{1}{(t+T)^{\frac{n}{2}-2-\alpha}\mathfrak D}\exp\left(-\frac{\mathfrak D}{(1-\beta)(t+T)}\right)\\
        &&\displaystyle -C \frac{1}{(t+T)^{\frac{n}{2}-1-\alpha}\mathfrak D^2}\exp\left(-\frac{\mathfrak D}{(1-\beta)(t+T)}\right)\\
        &&\displaystyle +C\left(\frac{n}{2}-2\right)\left(\frac{n}{2}-3\right)\frac{(t+T)^\alpha}{\mathfrak D^2}\int_0^{(1-\beta)(t+T)}s^{2-\frac{n}{2}}
        \exp\left(-\frac{\mathfrak D}{s}\right)\mathrm ds\\
        &\geq & \displaystyle C \frac{1}{(t+T)^{\frac{n}{2}-2-\alpha}\mathfrak D}\exp\left(-\frac{\mathfrak D}{(1-\beta)(t+T)}\right)
    \end{array}
    \]
    where $C$ denotes positive constants rely on $n,S,\alpha,\beta$, and $|x|$ is taken sufficiently large to make $\mathfrak D$ large enough.
    In summary, the estimates above imply 
    \[
    \begin{array}{llll}
    |E(x,t)| & \geq & \displaystyle \frac{C}{(t+T)^{\frac{n}{2}-\alpha-2}x'(I+A^2)x}\exp\left(-\frac{\mathfrak D}{(1-\beta)(t+T)}\right)\\
    &= & \displaystyle  \frac{C}{(t+T)^{\frac{n}{2}-\alpha-2}x'(I+A^2)x}\exp\left(-\sum_{i=1}^n\frac{(1+a_i^2)\left(|x_i|^2+2S|x_i|+S^2\right)}{4(1-\beta)(t+T)}\right)\\
    &\geq & \displaystyle \frac{C}{(t+T)^{\frac{n}{2}-\alpha-2}x'(I+A^2)x}\exp\left(- \frac{x'(I+A^2)x+4S|(I+A^2)x|}{4(1-\beta)(t+T)}\right),
    \end{array}
    \]
    for sufficiently large $|x|$ and for all $t\in (-T,0]$. 
    Consequently, the main term in estimate \eqref{equ-Result-AsymBehav-u} cannot be enhanced in general.
\end{proof}

\small

\bibliographystyle{plain}

\bigskip

\noindent J. Bao 

\medskip

\noindent  School of Mathematical Sciences, Beijing Normal University\\
Beijing 100875, China \\[1mm]
Email: \textsf{jgbao@bnu.edu.cn}

\bigskip

\noindent Z. Liu (Corresponding author)

\medskip

\noindent  School of Mathematics, Statistics and Mechanics, Beijing University of Technology\\
Beijing 100124, China \\[1mm]
Email: \textsf{liuzixiao@bjut.edu.cn}

\end{document}